\documentclass[reqno]{amsart}

\usepackage{amsmath,amsfonts,amsthm,amssymb,graphics, paralist, pstricks}

\newcommand{\g}{\geqslant}
\newcommand{\ar}{\rangle}
\newcommand{\al}{\langle}

\newcommand{\RR}{\mathbb{R}}

\newcommand{\CC}{\mathbb{C}}

\newcommand{\NN}{\mathbb{N}}
\newcommand{\p}{\partial}
\newcommand{\q}{\varphi}
\newcommand{\les}{\leqslant}
\newcommand{\lesa}{\lesssim}

\newcommand{\supp}{\text{supp }\,}

\DeclareSymbolFont{bbold}{U}{bbold}{m}{n}
\DeclareSymbolFontAlphabet{\mathbbold}{bbold}

\newcommand{\ind}{\mathbbold{1}}

\theoremstyle{plain}
\newtheorem{theorem}{Theorem}
\newtheorem{proposition}[theorem]{Proposition}
\newtheorem{lemma}[theorem]{Lemma}
\newtheorem{corollary}[theorem]{Corollary}

\theoremstyle{remark}
\newtheorem{remark}{Remark}

\setdefaultenum{(i)}{(a)}{1}{A}
 \linespread{1.3}
 \setlength{\oddsidemargin}{0cm}
\setlength{\evensidemargin}{0cm} \setlength{\topmargin}{0cm}
\setlength{\footskip}{5cm} \setlength{\textwidth}{16cm}
\setlength{\textheight}{23cm}

\title[Bilinear Estimates and applications to GWP for the DKG equation]{Bilinear Estimates and Applications to Global Well-Posedness for the Dirac-Klein-Gordon equation on $\RR^{1+1}$}
\author[Timothy Candy]{Timothy Candy \vspace{0.4cm}\\
\textit{D\lowercase{epartment} \lowercase{of} M\lowercase{athematics}, U\lowercase{niversity} \lowercase{of} E\lowercase{dinburgh}\\
E\lowercase{dinburgh} EH9 3JE, U\lowercase{nited} K\lowercase{ingdom} \\
E\lowercase{mail}: T.L.C\lowercase{andy}@\lowercase{sms.ed.ac.uk}}}

\thanks{The author wishes to thank his supervisor Nikolaos Bournaveas for many helpful conversations related to this work. The results in this article will form part of the authors PhD thesis.}

\date{\today}
\begin{document}
\maketitle
\begin{abstract}
  We prove new bilinear estimates for the  $X^{s, b}_{\pm}(\RR^2)$ spaces which are optimal up to endpoints. These estimates are often used in the theory of nonlinear Dirac equations on $\RR^{1+1}$. The proof of the bilinear estimates follows from a dyadic decomposition in the spirit of  Tao \cite{Tao2001} and D'Ancona, Foschi, and Selberg \cite{D'Ancona2010}. As an application, by using the $I$-method of Colliander, Keel, Staffilani, Takaoka, and Tao,  we extend the work of Tesfahun \cite{Tesfahun2009} on global existence below the charge class for the Dirac-Klein-Gordon equation on $\RR^{1+1}$.

\end{abstract}
\setcounter{tocdepth}{1}

\tableofcontents

\section{Introduction}
 We consider the problem of proving bilinear estimates in the Bourgain-Klainerman-Machedon type spaces $X^{s, b}_{\pm}$ on $\RR^2$, where we define the spaces $X^{s, b}_\pm$ via the norm
        $$ \| \psi \|_{X^{s, b}_\pm} = \big\| \al \tau \pm \xi\ar^b \al \xi \ar^s \widetilde{\psi}(\tau, \xi) \|_{L^2_{\tau, \xi}(\RR^2)}$$
 with $\al \cdot \ar= \sqrt{ 1+ |\cdot|^2}$. These spaces have been used in the low regularity theory of various nonlinear Dirac equations in one space dimension, \cite{Machihara2005, Selberg2010b}, as well as the Dirac-Klein-Gordon (DKG) system \cite{Pecher2006, Selberg2008}. Though recently, product Sobolev spaces based on the null coordinates $x\pm t$ have also proved useful \cite{Candy2010, Machihara2010}. In applications of the $X^{s, b}_\pm$ spaces to  low regularity well-posedness, we often require product estimates of the form
            \begin{equation}\label{standard form} \| u v\|_{X^{ -s_1, -b_1}_{\pm_1}} \lesa \| u \|_{X^{s_2, b_2}_{\pm_2}} \| v \|_{X^{s_3, b_3}_{\pm_3}}\end{equation}
 where $s_j, b_j \in \RR$ and $\pm_j$ are independent choices of $\pm$. A number of estimates of this form, for specific values of $s_j$ and $b_j$, have appeared previously in the literature \cite{Machihara2005, Selberg2008, Selberg2010b}. The case where $\pm_1 = \pm_2 = \pm_3$ is not particularly interesting, as a simple change of variables reduces (\ref{standard form}) to two applications of the 1-dimensional Sobolev product estimate
        $$ \| f g\|_{H^{-s_1}(\RR)} \lesa \| f\|_{H^{s_2}(\RR)} \| g\|_{H^{s_3}(\RR)}.$$
 Thus leading to the conditions\footnote{For the sake of exposition, we are ignoring the endpoint cases. The sharp result allows one of the inequalities in (\ref{product sobolev cond 1}) to replaced with an equality, a similar comment applies to the condition (\ref{product sobolev cond 2}).}
        \begin{equation}
          \label{product sobolev cond 1} b_j + b_k>0, \qquad b_1 + b_2 + b_3>\frac{1}{2}
        \end{equation}
 and
        \begin{equation}
          \label{product sobolev cond 2} s_j + s_k>0, \qquad s_1 + s_2 + s_3>\frac{1}{2}
        \end{equation}
 where $j \neq k$. On the other hand, if we have $\pm_1=\pm_2=\pm$ and $\pm_3 = \mp$, then we can make significant improvements over  (\ref{product sobolev cond 2}).  This observation allows one to exploit the null structure that is often found in nonlinear hyperbolic systems in one dimension, see for instance \cite{Selberg2010b}.\\

To state our first result we use the following conventions. For a set of real numbers $\{a_1, a_2, a_3\}$, we let $a_{max}=\max_{i} a_i$, $a_{min}=\min_i a_i$, and use $a_{med}$ to denote the median. If $a \in \RR$ then we define
            $$ a_+ = \begin{cases} a \qquad &a>0 \\
                                    0  &a\les 0. \end{cases} $$
We state our product estimate in the dual form.

\begin{theorem}\label{thm main X^sb esimate}
  Let $s_j$, $b_j \in \RR$, $j=1, 2, 3$ satisfy
   \begin{equation} b_1+ b_2 + b_3 >\frac{1}{2},\qquad b_j + b_k >0,  \qquad (j\neq k) \label{thm main X^sb esitimate - cond on b}\end{equation}
    and for $k \in \{ 1, 2\}$
      \begin{equation} \label{thm main X^sb esitimate - cond on s} \begin{split} s_1 + s_2 &\g 0, \\
       s_k + s_3  &> -b_{min},\\
       s_k +s_3&> \frac{1}{2} - b_1 - b_2 - b_3, \\
       s_1 + s_2 + s_3 &>  \frac{1}{2} - b_3,\\
       s_1 + s_2 + s_3 &> \Big(\frac{1}{2} - b_{max}\Big)_+ + \Big(\frac{1}{2} - b_{med}\Big)_+ - b_{min}. \end{split}\end{equation}
  Then
             \begin{equation}\label{thm main X^sb est - main trilinear est} \Big|\int_{\RR^2} \Pi_{j=1}^3 \psi_j(t, x) dx dt\Big| \lesa \| \psi_1\|_{X^{s_1, b_1}_\pm} \| \psi_2 \|_{X^{s_2, b_2}_\pm} \| \psi_3 \|_{X^{s_3, b_3}_\mp}. \end{equation}
  Moreover the conditions (\ref{thm main X^sb esitimate - cond on b}) and (\ref{thm main X^sb esitimate - cond on s}) are sharp up to equality.
\end{theorem}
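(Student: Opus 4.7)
The proof follows the dyadic decomposition approach of Tao \cite{Tao2001} and D'Ancona--Foschi--Selberg \cite{D'Ancona2010}. By Plancherel's theorem and duality, the left side of (\ref{thm main X^sb est - main trilinear est}) is recast as a Fourier-side trilinear form
$$\int_{\Gamma}\widetilde{\psi}_1(\tau_1,\xi_1)\,\widetilde{\psi}_2(\tau_2,\xi_2)\,\widetilde{\psi}_3(\tau_3,\xi_3)\,d\sigma$$
supported on the surface $\Gamma=\{\tau_1+\tau_2+\tau_3=0,\ \xi_1+\xi_2+\xi_3=0\}$. Substituting $\widetilde{\psi}_j=\al\xi_j\ar^{-s_j}\al\tau_j\pm_j\xi_j\ar^{-b_j}f_j$ reduces matters to a weighted trilinear $L^2$ estimate for nonnegative $f_j$.

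I then Littlewood--Paley decompose $f_j=\sum_{N_j,L_j}f_{j,N_j,L_j}$ with $|\xi_j|\sim N_j$ and $|\tau_j\pm_j\xi_j|\sim L_j$ running through dyadic scales; the weights reduce to $N_j^{-s_j}L_j^{-b_j}$ on each piece. The crucial algebraic input is the null identity
$$(\tau_1\pm_1\xi_1)+(\tau_2\pm_2\xi_2)+(\tau_3\pm_3\xi_3)=\pm 2\xi_3\qquad\text{on }\Gamma,$$
valid since two of the signs agree while the third is opposite. This forces $L_{max}\gtrsim N_3$ and eliminates the ``fully resonant'' region $L_1,L_2,L_3\ll N_3$. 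Note that only $N_3$ appears on the right, not $N_{max}$; this one-dimensional null structure is weaker than its higher-dimensional analogues and is precisely what accounts for the asymmetric roles of $s_3$ versus $s_1,s_2$ in (\ref{thm main X^sb esitimate - cond on s}).

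For each dyadic tuple one needs a bilinear $L^2_{t,x}$ bound on the product of two Fourier-localised pieces. These are obtained by combining Plancherel with Bernstein-type inequalities: a function whose Fourier support has area $A$ satisfies $\|u\|_{L^\infty_{t,x}}\lesa A^{1/2}\|u\|_{L^2}$, which may be sharpened in particular directions via null coordinates when the two factors sit on parallel characteristic lines; in the opposite-sign case the two tubes in Fourier space intersect transversally, producing an additional gain involving $\min(N_j,N_k)^{1/2}$. Combined with the null identity and the weights $N_j^{-s_j}L_j^{-b_j}$, the proof reduces to verifying that an explicit sum over dyadic $(N_j,L_j)$ converges under (\ref{thm main X^sb esitimate - cond on b}) and (\ref{thm main X^sb esitimate - cond on s}).

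The main obstacle, and the heart of the proof, is the case analysis for this final summation. Regions are organised by which $N_j$ is largest (high-high-to-low versus high-low-to-high) and which $L_j$ is largest, and each of the five inequalities in (\ref{thm main X^sb esitimate - cond on s}) emerges as the precise convergence threshold in one regime. The most delicate is the near-resonant regime $L_1\sim L_2\sim L_3\sim N_3$: when $b_{max}$ or $b_{med}$ drops below $\tfrac12$ one loses the endpoint Strichartz-type factor and pays $N_3^{1/2-b_\bullet}$, producing the $\bigl(\tfrac12-b_{\bullet}\bigr)_+$ terms in the last condition. Finally, sharpness of each condition (up to the non-strict inequalities) is established by Knapp-type counter-examples in which $\widetilde{\psi}_j$ is a characteristic function of a thin rectangle tangent to the characteristic line $\tau=\mp_j\xi$, chosen to saturate the bilinear bound employed in the corresponding regime.
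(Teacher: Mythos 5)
Your outline matches the paper's proof in all essentials: Plancherel/duality reduction to a weighted trilinear form on $\Gamma$, dyadic decomposition in both $|\xi_j|\approx N_j$ and $|\lambda_j|\approx L_j$, the null identity $\lambda_1+\lambda_2+\lambda_3=\pm 2\xi_3$ forcing $L_{max}\gtrsim N_3$, an explicit case analysis of the resulting dyadic sum, and Knapp-box counter-examples for sharpness. One small misattribution: the factor $N_{min}^{1/2}\min(L_j,L_k)^{1/2}$ comes from plain Cauchy--Schwarz on the localised characteristic functions and is available regardless of sign, whereas the genuine gain from the opposite-sign (transversal) structure in the paper's key localised bound is the pair of estimates $L_1^{1/2}L_3^{1/2}$ and $L_2^{1/2}L_3^{1/2}$, which live entirely in the modulation variables; rigorising your sketch would require writing out that full three-way minimum and the accompanying summation lemma, but this is a matter of detail rather than a gap in the approach.
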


\begin{remark}
 There are cases where we can allow  equality in (\ref{thm main X^sb esitimate - cond on b}) or (\ref{thm main X^sb esitimate - cond on s}), for instance the case
    $$s_1=s_2=s_3=0, \qquad  b_1=0,\qquad  b_2=b_3=\frac{1}{2}+\epsilon$$
 holds  \cite[Corollary 1]{Selberg2008}. We have not attempted to list or prove the endpoint cases here, as this would significantly complicate  the statement of Theorem \ref{thm main X^sb esimate}. Additionally, Theorem \ref{thm main X^sb esimate} is sufficient for our intended application to global well-posedness for the Dirac-Klein-Gordon equation.
\end{remark}

Define the Wave-Sobolev spaces $H^{s, b}$ by using the norm
        $$ \| \psi\|_{H^{s, b}} = \big\| \al |\tau| - |\xi| \ar^b \al \xi \ar^s \widetilde{\psi}(\tau, \xi) \|_{L^2_{\tau, \xi}(\RR^2)}.$$
Then as a simple corollary to Theorem \ref{thm main X^sb esimate} we can replace one of the $X^{s, b}_{\pm}$ norms on the righthand side of (\ref{thm main X^sb est - main trilinear est}) with a $H^{s, b}$ norm.

\begin{corollary}\label{cor wave-sobolev and X^sb estimate}
Let $r, s_1, s_2,  b_j \in \RR$, $j=1, 2, 3$ satisfy
    $$ b_1+ b_2 + b_3>\frac{1}{2}, \qquad  b_j + b_k>0, \qquad (j \neq k)$$
and for $k \in \{ 1, 2\}$
      \begin{align*}  s_k +r &\g 0, \\
       s_k + r &>-b_{min} \\
       s_1 + s_2 &> -b_{min},\\
       s_1 + s_2 &> \frac{1}{2} - b_1 - b_2 - b_3, \\
       s_1 + s_2 + r &>  \frac{1}{2} - b_k ,\\
       s_1 + s_2 + r&> \Big(\frac{1}{2} - b_{max}\Big)_+ + \Big(\frac{1}{2} - b_{med}\Big)_+ - b_{min}. \end{align*}
  Then
             $$ \Big|\int_{\RR^2} \Pi_{j=1}^3 \psi_j(t, x) dx dt\Big| \lesa  \| \psi_1 \|_{X^{s_1, b_1}_+} \| \psi_2 \|_{X^{s_2, b_2}_-}\| \psi_3\|_{H^{r, b_3}}. $$
\begin{proof}
  We decompose $\psi_3$ into the regions $\{ (\tau, \xi) \in \RR^{1+1} \,| \, \pm \tau \xi \g 0 \}$  and observe that on the first region $\al |\tau| - |\xi| \ar  = \al \tau - \xi \ar$ while in the second region $ \al |\tau| - |\xi| \ar  = \al \tau + \xi \ar$. The corollary now follows from two applications of Theorem \ref{thm main X^sb esimate}.
\end{proof}
\end{corollary}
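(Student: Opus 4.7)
The plan is to decompose $\psi_3$ on the Fourier side according to the sign of $\tau\xi$, identify the wave-Sobolev weight with one of the two $X^{s,b}_\pm$ weights on each piece, and then apply Theorem~\ref{thm main X^sb esimate} twice. The starting observation is the identity
\[ |\tau|-|\xi| = \begin{cases} \pm(\tau-\xi) & \tau\xi\g 0,\\ \pm(\tau+\xi) & \tau\xi\les 0,\end{cases}\]
with the outer sign determined by the sign of $\tau$. Consequently $\al|\tau|-|\xi|\ar=\al\tau-\xi\ar$ on $\{\tau\xi\g 0\}$ and $\al|\tau|-|\xi|\ar=\al\tau+\xi\ar$ on $\{\tau\xi\les 0\}$.

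Accordingly, I would write $\psi_3=\psi_3^{(1)}+\psi_3^{(2)}$, where $\widetilde{\psi_3^{(1)}}=\widetilde{\psi_3}\ind_{\{\tau\xi\g 0\}}$ and $\widetilde{\psi_3^{(2)}}=\widetilde{\psi_3}\ind_{\{\tau\xi<0\}}$. By Plancherel,
\[\|\psi_3^{(1)}\|_{X^{r,b_3}_-}\les\|\psi_3\|_{H^{r,b_3}},\qquad \|\psi_3^{(2)}\|_{X^{r,b_3}_+}\les\|\psi_3\|_{H^{r,b_3}}.\]
Inserting this splitting into $\int\psi_1\psi_2\psi_3\,dx\,dt$ gives two trilinear integrals to which Theorem~\ref{thm main X^sb esimate} can be applied. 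For the $\psi_3^{(1)}$ term the sign pattern is $(+,-,-)$, so $\psi_1$ plays the role of the ``odd'' factor (the index $3$ slot in Theorem~\ref{thm main X^sb esimate}); for the $\psi_3^{(2)}$ term the sign pattern is $(+,-,+)$ and $\psi_2$ is the odd factor.

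The remaining step, and the only one that requires care, is to verify that the hypotheses of the corollary imply the hypotheses of Theorem~\ref{thm main X^sb esimate} under both relabellings. The conditions \eqref{thm main X^sb esitimate - cond on b} on the $b_j$'s are symmetric in the three slots and transfer verbatim. The conditions \eqref{thm main X^sb esitimate - cond on s} on the $s_j$'s distinguish the odd slot from the other two, so one obtains two sets of requirements; taking their union gives exactly the $s$-list in the corollary (with $r$ now playing the part of whichever of $s_1,s_2$ sits in the odd slot). The mild asymmetry in $k\in\{1,2\}$ in the corollary's hypotheses is precisely what accommodates the two separate applications.

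The main (very modest) obstacle is bookkeeping: keeping track of which corollary index maps to which theorem index in each of the two applications, and confirming that every condition in Theorem~\ref{thm main X^sb esimate} is covered by the hypotheses listed in the corollary. Once this is organized, no further analysis is needed — the proof reduces entirely to the Fourier decomposition and two invocations of the theorem.
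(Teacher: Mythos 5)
Your proposal is correct and follows essentially the same route as the paper: split $\psi_3$ on the Fourier side according to the sign of $\tau\xi$, note that $\al|\tau|-|\xi|\ar$ equals $\al\tau-\xi\ar$ on $\{\tau\xi\g 0\}$ and $\al\tau+\xi\ar$ on $\{\tau\xi<0\}$, and apply Theorem~\ref{thm main X^sb esimate} to each piece with the appropriate relabelling of slots. One small imprecision in your bookkeeping: the union of the two sets of hypotheses is not \emph{exactly} the corollary's list, since it also yields $s_k + r > \tfrac{1}{2} - b_1 - b_2 - b_3$ for $k\in\{1,2\}$; however, these are automatic from $s_k + r \g 0$ together with $b_1+b_2+b_3>\tfrac{1}{2}$, so nothing is lost.
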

\begin{remark}
This result should be compared to the similar estimates  contained in \cite{Selberg2008} and \cite{Tesfahun2009}. Also we note that the decomposition used in the proof of Corollary \ref{cor wave-sobolev and X^sb estimate} can  be used to give bilinear estimates in the Wave-Sobolev spaces $H^{r, b}$, thus giving an alternative (though closely related) proof of Theorem 7.1 in \cite{D'Ancona2010b} (up to endpoints).
\end{remark}

The second main result contained in this article concerns the global existence problem for the DKG equation on $\RR^{1+1}$. The DKG equation can be written as
        \begin{equation}\label{DKG general form}
            \begin{split}
                \big( \gamma^0 \p_t + \gamma^1 \p_x\big) \psi &= -i M \psi + i \phi \psi \\
                \big( - \Box + m^2\big)\phi &= \al \gamma^0 \psi, \psi \ar_{\CC^2}
            \end{split}
        \end{equation}
with initial data
        \begin{equation}\label{DKG general initial data}
            \psi(0) = \psi_0 \in H^s, \qquad \phi(0) = \phi_0 \in H^r, \qquad \p_t \phi(0) = \phi_1 \in H^{r-1}
        \end{equation}
for some values of $s, r \in \RR$. The d'Alembertian is defined by $\Box= - \p_t^2 + \p_x^2$ and  we take the standard representation of the Dirac matrices
        $$ \gamma^0 = \begin{pmatrix} 0 & 1 \\ 1 & 0 \end{pmatrix}, \qquad \gamma^{1} = \begin{pmatrix} 0 & -1 \\ 1 & 0 \end{pmatrix}.$$
The Dirac spinor $\psi \in \CC^2$, and the real-valued scalar field $\phi \in \RR$, are functions of $(t, x) \in \RR^{1+1}$. The notation $\al \cdot, \cdot\ar_{\CC^2}$ refers to the standard inner product on $\CC^2$, and  $m, M \in \RR$ are constants.

There are two main features of the DKG equation (\ref{DKG general form}) which we wish to highlight here. The first feature concerns the conservation of charge which can be stated as follows: if $(\psi, \phi)$ is a smooth solution to (\ref{DKG general form}) with sufficient decay at infinity, then for all times $t \in \RR$ we have
        \begin{equation}\label{conservation of charge} \| \psi(t) \|_{L^2} = \| \psi(0) \|_{L^2}. \end{equation}
The conservation of charge is crucial in controlling the global behaviour of the solution $(\psi, \phi)$. The second feature we would like to note is that the nonlinearity in the DKG equation has null structure. Roughly speaking, this refers to the fact that the nonlinear terms in (\ref{DKG general form}) behave significantly better than generic products. The null structure is a crucial component in the low regularity existence theory for the DKG equation and has been used by a number of authors \cite{Bournaveas2006, Fang2004a, Machihara2007b, Pecher2006, Selberg2008}. The observation that null structure can be used to improve local existence results for nonlinear wave equations is due to Klainerman and Machedon in \cite{Klainerman1993}. \\

The question of local well-posedness (LWP) for the DKG equation was first considered by Chadam \cite{Chadam1973}. Subsequently, much progress has been made by numerous authors \cite{Bournaveas2006, Fang2004a, Machihara2007b, Pecher2006, Selberg2008}. The best result to date is due to Machihara, Nakanishi, and Tsugawa \cite{Machihara2010} where it was shown that (\ref{DKG general form}) with initial data (\ref{DKG general initial data}) is locally well-posed provided
        $$ s>-\frac{1}{2}, \qquad |s| \les r \les s+1.$$
Moreover, this region is essentially sharp, except possibly at the endpoint $s= - \frac{1}{2}$. More precisely, outside this region the solution map is either ill-posed, or fails to be twice differentiable, see \cite{Machihara2010} for  a more precise statement.\\

In the current article we are interested in the minimum regularity required on the initial data (\ref{DKG general initial data}) to ensure that the corresponding local in time solution $(\psi, \phi)$ to (\ref{DKG general form}) can be extended globally in time. Global well-posedness (GWP) in the high regularity case $s=r=1$ was first proven by Chadam \cite{Chadam1973}, this was then progressively lowered to $s\g0$ by a number of authors \cite{Bournaveas2000, Bournaveas2006a, Chadam1973, Fang2004a, Pecher2006} by exploiting the conservation of charge (\ref{conservation of charge}) together with the local well-posedness theory. The first result below the charge class was due to Selberg \cite{Selberg2007} where it was shown that the DKG equation is GWP in the region\footnote{Note that this also gives GWP in the region $s>0$, $|s|\les r \les s+1$ by persistence of regularity, see for instance \cite{Selberg2008}.}
    $$- \frac{1}{8}<s<0, \qquad -s+\sqrt{s^2 - s} < r\les s+1.$$
Note that when $s<0$, the conservation of charge cannot be used directly since $\psi \not \in L^2$, thus the problem of global existence is significantly more difficult. Instead Selberg made use of the Fourier truncation method of Bourgain \cite{Bourgain1998}, which allows one to take initial data just below a conserved quantity.  There is a difficulty in directly applying this method to the DKG equation however, as there is no conservation law for the scalar $\phi$. Instead, one needs to exploit the fact the nonlinearity for $\phi$ depends only on the spinor $\psi$. Thus, as we have control over $\psi$ via the conservation of charge, we should be able to estimate the growth of $\phi$. This strategy was implemented by Selberg via an induction argument involving the cascade of free waves.

Currently, the best result for GWP for the DKG equation is due to Tesfahun \cite{Tesfahun2009} where the GWP region of Selberg was extended to
          $$- \frac{1}{8}<s<0, \qquad s+\sqrt{s^2 - s} < r\les s+1.$$
The improvement comes from applying the $I$-method of Colliander, Keel, Staffilani, Takaoka, and Tao, see for instance \cite{Colliander2002} for an introduction to the $I$-method. In the current article, we prove the following.

\begin{theorem}\label{thm DKG in intro}
The DKG equation (\ref{DKG general form}) is globally well-posed for initial data $ \psi_0 \in H^s$, $(\phi_0, \phi_1) \in H^r\times H^{r-1}$ provided
 $$  -\frac{1}{6}<s<0,  \qquad s-\frac{1}{4} + \sqrt{\Big(s-\frac{1}{4}\Big)^2 - s}<r \les s+1.$$
\end{theorem}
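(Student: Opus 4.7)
The plan is to implement the $I$-method of Colliander-Keel-Staffilani-Takaoka-Tao in the spirit of Tesfahun, with the source of the improvement being the sharp trilinear bounds of Theorem \ref{thm main X^sb esimate} and Corollary \ref{cor wave-sobolev and X^sb estimate}. First I would diagonalise the Dirac operator by writing $\psi=(\psi_+,\psi_-)$, so that the Dirac part of (\ref{DKG general form}) becomes a pair of scalar transport equations $(\p_t \pm \p_x)\psi_\pm = G_\pm(\psi,\phi)$; this places $\psi_\pm$ naturally in $X^{s,b}_\pm$ and exposes the null structure required by (\ref{thm main X^sb est - main trilinear est}). Introduce the Fourier multiplier $I=I_N$ with a smooth symbol $m_N(\xi)$ equal to $1$ for $|\xi|\les N$ and to $(N/|\xi|)^{-s}$ for $|\xi|\g 2N$, together with an analogous operator that lifts $\phi$ into the wave-energy class. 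Then $\|I\psi_0\|_{L^2} \lesa N^{-s}\|\psi_0\|_{H^s}$, and the charge-class local theory applied to the $I$-modified system produces a solution on an interval $[0,\delta]$ whose length is a negative power of $N$ times powers of the data norms.

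The central step is the almost-conservation estimate for the modified charge,
$$\Big|\|I\psi(\delta)\|_{L^2}^2 - \|I\psi(0)\|_{L^2}^2\Big| \lesa N^{-\alpha}\, \mathcal{N}(\psi,\phi),$$
where $\mathcal{N}$ is an admissible product of $X^{s_j,b_j}_{\pm}$ and $H^{r,b_j}$ norms, together with a polynomial-in-$t$ growth bound for the modified wave-energy of $\phi$. Since $\phi$ is real, the quantity $\tfrac{d}{dt}\|I\psi_\pm\|_{L^2}^2$ reduces after Plancherel to commutator trilinear forms of the type
$$\int_{\RR^{2}} \big( I(\phi\,\psi_\mp) - \phi\, I\psi_\mp\big)\,\overline{I\psi_\pm}\, dx\, dt,$$
in which the mismatch between $I$ of a product and the product of $I$'s supplies a pointwise Fourier gain of order $(N/|\xi|_{\max})^{-s}$. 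Extracting this gain and invoking Theorem \ref{thm main X^sb esimate} (respectively Corollary \ref{cor wave-sobolev and X^sb estimate} for the wave-equation part) with exponents $(s_j,b_j)$ placed at the boundary of (\ref{thm main X^sb esitimate - cond on s}) produces the decay $N^{-\alpha}$ with a definite $\alpha=\alpha(s,r) > 0$.

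With the almost-conservation in hand, the iteration is standard: partition $[0,T]$ into $\sim T/\delta$ local steps, accumulate at most $N^{-\alpha}$ per step in the modified charge together with a polynomial factor in the modified wave-energy, and close a continuity argument by choosing $N=N(T)$ sufficiently large. The hard part, and the precise point where $s > -1/6$ replaces Tesfahun's $s > -1/8$, is extracting a positive $\alpha$ from the commutator trilinear forms while keeping $(s_j, b_j)$ inside the sharp admissible region (\ref{thm main X^sb esitimate - cond on s}); the resulting constrained optimisation produces the algebraic curve $r > s - \tfrac{1}{4} + \sqrt{(s-\tfrac{1}{4})^2 - s}$, with the threshold $s = -1/6$ appearing as the limit beyond which the admissibility conditions no longer permit a positive $\alpha$.
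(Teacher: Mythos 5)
Your outline captures the right scaffolding — diagonalising the Dirac operator, introducing the $I$-multiplier, deriving an almost-conservation law from a commutator trilinear form, and iterating — and this is indeed the skeleton of the paper's argument. But there are two genuine gaps that would prevent the argument from closing as written.

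First, you never address how to control the scalar $\phi$ over the long time interval. There is no conservation law for $\phi$, and the phrase ``accumulate\dots together with a polynomial factor in the modified wave-energy'' papers over the real difficulty: if one bounds $\| I^2\phi[t]\|$ at each step simply by applying the local theory starting from the data at time $n\Delta T$, the implicit constants compound multiplicatively over the $\sim T/\Delta T$ steps and the continuity argument cannot close. The paper's resolution is Selberg's \emph{induction on free waves}: at each step $j$ one splits $\phi = \phi^{(0)}_j + \Phi_j$ into the free wave launched at time $j\Delta T$ plus an inhomogeneous part, shows the inhomogeneous part contributes only $O(\Delta T)$ (Lemma \ref{lem control of inhomogeneous term}), and passes the free-wave data forward with a \emph{uniform} constant via the homogeneous energy estimate. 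Without something of this shape, the growth of $\phi$ is not under control, and this is not a detail — it is the reason the charge-class argument does not trivially descend below $L^2$.

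Second, you attribute the passage from Tesfahun's $s > -1/8$ to $s > -1/6$ solely to ``keeping $(s_j, b_j)$ inside the sharp admissible region,'' and your almost-conservation statement carries only a factor $N^{-\alpha}$. In the paper the key gain is a factor of $\Delta T^{\frac{1}{2}-2\epsilon}$ in addition to the power of $N$ in Lemma \ref{lem smoothing}. This extra $\Delta T$ decay comes from exploiting $b$-exponents strictly below $\frac{1}{2}$ (permitted by (\ref{thm main X^sb esitimate - cond on b})) together with the time-dilation estimate of Lemma \ref{lem time dilation for Xsb}, and it is precisely what lets $\Delta T$ be taken larger — hence fewer iteration steps — which is where the numerical improvement originates. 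Without that factor, the constrained optimisation would not produce the curve $r > s - \tfrac{1}{4} + \sqrt{(s-\tfrac{1}{4})^2 - s}$. Relatedly, your commutator is written as $I(\phi\psi_\mp) - \phi\, I\psi_\mp$ rather than the paper's $I(\phi\psi_\mp) - I^2\phi\, I\psi_\mp$; the specific choice $I^2\phi$ is tuned so that the resulting norm $\| I^2\phi\|_{H^{r-2s,b}}$ has the same $N^{-2s}$ size as the squared spinor source in $\Box\phi = \psi^2 + \dots$, and this matters for the power counting that produces the final threshold.

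Everything else — the reduction to (\ref{growth of Ipsi}), the cancellation of the real quadratic term using that $\phi$ is real, the use of Theorem \ref{thm main X^sb esimate} and Corollary \ref{cor wave-sobolev and X^sb estimate} in the frequency-decomposed commutator estimates, and the choice of $N = N(T)$ to close the iteration — is in agreement with the paper's route.
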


 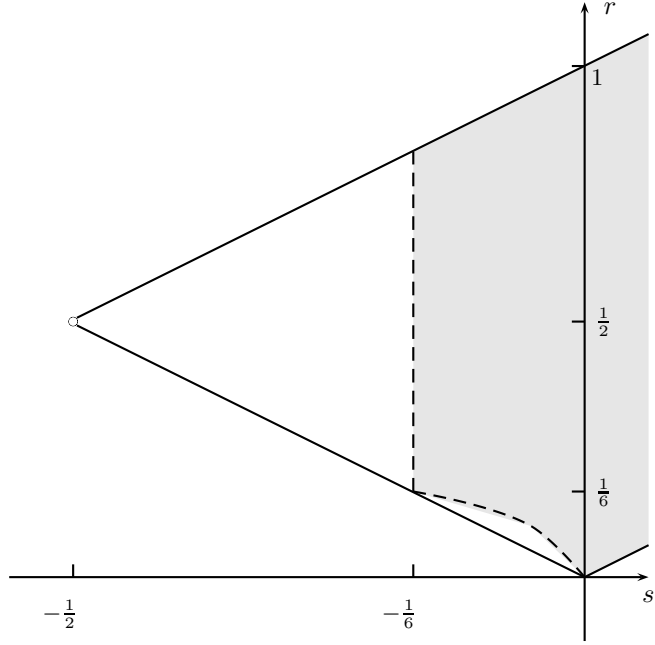
\begin{figure}\centering
\psset{xunit=1.7,yunit=1.7}
  \begin{pspicture}(-2.5,-2.5)(2.5,2.5)
\psline*[linecolor=gray!20](0.66,1.33)(2.5,2.25)(2.5,-1.75)(2,-2)(1.58,-1.6)(0.66,-1.33)(0.66,1.33)
    \psline{->}(-2.5,-2)(2.5,-2)
    \psline{->}(2,-2.5)(2,2.5)
    \psdot[dotstyle=o](-2,0)
    \psline(-1.97,0.03)(2.5,2.25)
    \psline(-1.97,-0.03)(2,-2)
    \psline(2,-2)(2.5,-1.75)
    \psline(1.9,2)(2,2) 
    \psline(1.9,0)(2,0) 
    \psline(1.9,-1.33)(2,-1.33) 
    \uput[u](2.1,1.75){\small $1$}
    \uput[u](2.15,-0.22){{\small $\frac{1}{2}$}}
    \uput[u](2.15,-1.55){\small$\frac{1}{6}$}
    \psline(-2,-2)(-2,-1.9) 
    \psline(0.66,-2)(0.66,-1.9) 
    \uput[u](-2.1,-2.5){{\small $-\frac{1}{2}$}}
    \uput[u](0.55,-2.5){\small$-\frac{1}{6}$}
    \uput[u](2.2,2.3){$r$}
    \uput[u](2.5,-2.3){$s$}
    \psline[linestyle=dashed](0.66,-1.33)(0.66,1.33)
    \pscurve[linestyle=dashed](0.66,-1.33)(1.58,-1.6)(2,-2)
  \end{pspicture}
\caption{ Global well-posedness holds in the shaded region by Theorem \ref{thm DKG in intro}. Local well-posedness holds inside the the lines $r=|s|$ and $r=s + 1$ for $s>\frac{-1}{2}$ by \cite{Machihara2010}.}
\end{figure}
The proof of Theorem \ref{thm DKG in intro} follows the argument used in \cite{Tesfahun2009} together with the bilinear estimates in Theorem \ref{thm main X^sb esimate}. More precisely, we use the $I$-method together with the induction on free waves approach of Selberg. The main idea, following the usual $I$-method, is to define a mild smoothing operator $I$ such that, firstly, for some large constant $N$, we have the estimate
    \begin{equation}\label{intro explanation of Imethod} \| If \|_{L^2(\RR)} \lesa N^{-s} \| f \|_{H^s(\RR)} \lesa N^{-s} \| I f \|_{L^2}.\end{equation}
  Secondly, we require $I$ to be the identity on low frequencies. We then try to estimate the growth of $\| I \psi(t) \|_{L^2}$ in terms of $t$.  It turns out that despite the fact that $I\psi$ no longer solves the DKG equation, there is sufficient cancelation of frequencies to ensure that the charge $\| I\psi(t) \|_{L^2_x}$ is almost conserved. This almost conservation property follows from the usual proof of the conservation of charge, together with a number of applications of Theorem \ref{thm main X^sb esimate}. Thus we can estimate the growth of $\| \psi(t) \|_{H^s}$ from (\ref{intro explanation of Imethod}). The induction on free waves approach of Selberg then allows us to control the scalar field $\phi$ and completes the proof of Theorem \ref{thm DKG in intro}.   \\

We now give a brief outline of this article. In Section \ref{sec - linear est}, we recall some properties of the $X^{s, b}$ and $H^{r, b}$ spaces which we require in the proof of Theorem \ref{thm DKG in intro}. The proof of Theorem \ref{thm DKG in intro} is contained in Section \ref{sec gwp for DKG}. In Section \ref{sec bilinear est} we prove that the conditions in Theorem \ref{thm main X^sb esimate} are sufficient for the estimate (\ref{thm main X^sb est - main trilinear est}). Finally, the counter examples showing that Theorem \ref{thm main X^sb esimate} is sharp up to equality are contained in Section \ref{sec counter examples}. \\

\textbf{Notation}:
The Fourier transform on $\RR$ of a function $f \in L^1(\RR)$ is denoted by $\widehat{f}(\xi) = \int_\RR f(x) e^{-i x\xi} dx$. We use the notation $\widetilde{f}(\tau, \xi)$ for the space-time Fourier transform of a function $f(t, x)$ on $\RR^{1+1}$. We write $a \lesa b$ if there is some constant $C$, independent of the variables under consideration, such that $a \les C b$. If we wish to make explicit that the constant $C$ depends on $\delta$ we write $a \lesa_\delta b$. Occasionally we write $a \ll b$ if $C<1$. We use $a \approx b$ to denote the inequalities $a \lesa b$ and $b \lesa a$.

All sums such as $\sum_{N} f(N)$ are over dyadic numbers $N \in 2^{\NN}$. Given dyadic variables $N_1, N_2, N_3 \in 2^{\NN}$, we use the short hand
        $$ \sum_{N_{max} \approx N_{med}} = \sum_{N_{max} \in 2^{\NN}} \sum_{\substack{  \\ N_{med} \in 2^{\NN} \\ N_{med} \approx N_{max} } } \sum_{\substack{ N_{min} \in 2^{\NN} \\ N_{min} \lesa N_{med} } }.$$
We let $\ind_\Omega$ denote the characteristic function of the set $\Omega$, we occasionally abuse notation and write $ \ind_{|x| \approx N} $
instead of $\ind_{\{ |x| \approx N\}}$. The standard Sobolev space $H^s$ is defined as the completion of $C^\infty_0$ using the norm
            $$ \| f \|_{H^s} = \| \al \xi \ar^s \widehat{f} \|_{L^2}.$$
If $u$ is a function of  $(t, x) \in \RR^{1+1}$ we use the notation
        $$ \| u[t] \|_{H^s} = \| u(t) \|_{H^s} + \| \p_t u(t) \|_{H^{s-1}}.$$
To handle solutions to the wave equation, we make use of the Banach space $\mathcal{H}^{r, b}$ defined via the norm
    $$ \| \q \|_{\mathcal{H}^{r, b}} = \| \q \|_{H^{r, b}} + \| \p_t \q \|_{H^{r-1, b}}.$$

The proof of Theorem \ref{thm DKG in intro} requires the use of the local in time versions of the $X^{s, b}_\pm$ and $\mathcal{H}^{r, b}$ spaces.  Let $S_{\Delta T} = [0, \Delta T] \times \RR$. We define $X^{s, b}_\pm(S_{\Delta T})$ by restricting elements of $X^{s,b}_\pm$ to $S_{\Delta T}$. More precisely,
    $$X^{s, b}_\pm(S_{\Delta T})= X^{s, b}_\pm/ \{ f \in X^{s, b}_\pm \, |\, f|_{S_{\Delta T}} = 0\}.$$
The local in time space $X^{s, b}_\pm(S_{\Delta T})$ is a Banach space with norm
       $$ \| \q \|_{X^{s,b}_\pm(S_{\Delta T})} = \inf_{ u = \q \text{ on } S_{\Delta T}} \| u \|_{X^{s, b}_\pm}.$$
If $b>\frac{1}{2}$, then we have the continuous embedding $X^{s, b}_\pm(S_{\Delta T}) \subset C\big([0, \Delta T], H^s\big)$. We define the Banach spaces $\mathcal{H}^{r, b}(S_{\Delta T})$ similarly and note that, if $b>\frac{1}{2}$, then we have the continuous embedding $\mathcal{H}^{r, b}(S_{\Delta T}) \subset C\big( [0, \Delta T], H^r \big) \cap C^1\big( [0, \Delta T], H^{r-1} \big)$.

\section{Linear Estimates}\label{sec - linear est}
Here we  briefly recall some of the important properties of the $X^{s, b}_\pm$ and $\mathcal{H}^{r, b}$ spaces which we make use of in the proof of Theorem \ref{thm DKG in intro}, for more details we refer the reader to \cite{D'Ancona2007b} and \cite{Tao2006b}. We start by recalling some properties of the localised spaces $X^{s, b}_\pm(S_{\Delta T})$.

\begin{lemma} \label{lem time dilation for Xsb}
Let $s \in \RR$, $0<\Delta T<1$, and $\nu \in C^\infty_0(\RR)$. If $- \frac{1}{2} < b_1 \les b_2 < \frac{1}{2}$ then
            $$ \Big\| \nu\Big( \frac{t}{\Delta T} \Big) u(t, x) \Big\|_{X^{s, b_1}_\pm} \lesa \Delta T^{b_2 - b_1 } \| u \|_{X^{s, b_2}_\pm}.$$
Consequently, we have $ \| u \|_{X^{s, b_1}_\pm(S_{\Delta T})} \lesa \Delta T^{b_2 - b_1} \| u \|_{X^{s, b_2}_\pm(S_{\Delta T})}$. Moreover if $-\frac{1}{2} <b < \frac{1}{2}$ then
        $$ \| \ind_{[0, \Delta T]}(t) u \|_{X^{s, b}_\pm} \lesa \| u \|_{X^{s, b}_\pm(S_{\Delta T})}$$
with constant independent of $\Delta T$.
\begin{proof}
   The first conclusion is well known and can be found in, for instance, \cite{Tao2006b}. The second conclusion is perhaps not as well known and for the convenience of the reader we include the proof here. The definition of $X^{s, b}_\pm(S_{\Delta T})$ together with a change of variables on the frequency side shows that is suffices to prove
        $$ \| \ind_{[0, \Delta T]}(t) f \|_{H^{b}} \lesa \| f \|_{H^{b} }.$$
  By duality we may assume that $0<b<\frac{1}{2}$. Then by a well-known characterisation of the Sobolev spaces $H^s$, (see for instance \cite{Adams2003}) we have
    \begin{align*}
      \| \ind_{[0, \Delta T]} f \|_{H^b}^2 &\approx \| \ind_{[0, \Delta T]} f \|_{L^2}^2  + \int_{\RR^2} \frac{ | \ind_{[0, \Delta T]}(t) f(t) - \ind_{[0, \Delta T]}(t') f(t') |^2}{|t - t'|^{1+ 2b}} dt dt'\\
            &\lesa \| f\|_{L^2}^2 + \int_0^{\Delta T} \int_0^{\Delta T} \frac{ |f(t) - f(t') |^2}{|t - t'|^{1+ 2b} } dt dt' + 2 \int_0^{\Delta T} \int_{t' \not \in [0, \Delta T]} \frac{|f(t)|^2}{ |t - t'|^{1+2b}} dt' dt\\
            &\lesa \| f\|_{H^b}^2 + 2 \int_0^{\Delta T} \int_{t' \not \in [0, \Delta T]} \frac{|f(t)|^2}{ |t - t'|^{1+2b}} dt' dt.
    \end{align*}
 To complete the proof we use Hardy's inequality (see for instance \cite[Lemma A.2]{Tao2006b}) together with the assumption $0<b<\frac{1}{2}$ to deduce that
    \begin{align*}
      \int_0^{\Delta T} \int_{t' \not \in [0, \Delta T]} \frac{|f(t) |^2}{ |t - t'|^{1+2b}} dt' dt
                &\lesa \int_0^{\Delta T} |f(t)|^2 \Big( \frac{1}{|t|^{2b}} + \frac{1}{|t - \Delta T|^{2b}} \Big) dt\\
                &\lesa \Big\| \frac{f(t)}{|t|^{b}} \Big\|^2_{L^2} + \Big\| \frac{f(t)}{|t-\Delta T|^{b}} \Big\|^2_{L^2}\\
                &\lesa \| f\|_{H^b}^2.
      \end{align*}

\end{proof}

\end{lemma}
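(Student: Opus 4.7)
The plan is to reduce all three assertions to purely one-dimensional estimates on $H^b_t$. Observe that the isometric change of variables $(\tau, \xi) \mapsto (\tau \mp \xi, \xi)$ on the Fourier side diagonalises the weight $\al \tau \pm \xi\ar^b \al \xi\ar^s$, so the $X^{s,b}_\pm$ norm is the $L^2_\xi$-norm of $\al \xi\ar^s$ times an $H^b_t$-norm in the time variable (performed after an inverse Fourier transform in the new time-frequency variable). Since both multiplication by $\nu(t/\Delta T)$ and by $\ind_{[0, \Delta T]}(t)$ act purely in $t$ and leave the weight $\al \xi\ar^s$ untouched, it suffices to prove the corresponding scalar inequalities for $H^b_t$, uniformly in $\xi$.

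For the first estimate, I would establish the one-dimensional inequality
$$\Big\| \nu\Big( \frac{t}{\Delta T}\Big) g\Big\|_{H^{b_1}} \lesa \Delta T^{b_2 - b_1}\| g\|_{H^{b_2}}$$
on the Fourier side: multiplication by $\nu(t/\Delta T)$ becomes convolution against $\Delta T\,\widehat{\nu}(\Delta T\,\cdot)$, whose rapid decay beyond the scale $|\tau| \gtrsim 1/\Delta T$ produces the gain $\Delta T^{b_2 - b_1}$ after comparing the weights $\al\tau\ar^{b_1}$ and $\al\tau\ar^{b_2}$ via the elementary pointwise bound $\al \tau\ar^{b_1} \lesa \al \tau - \sigma\ar^{b_1}\al\sigma\ar^{|b_1|}$. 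The second assertion is then a formal consequence: given $u \in X^{s, b_2}_\pm(S_{\Delta T})$ and any extension $U \in X^{s, b_2}_\pm$, pick a fixed $\nu \in C^\infty_0$ with $\nu \equiv 1$ on $[0,1]$; then $\nu(t/\Delta T) U$ is another extension of $u$, and applying the first inequality and taking the infimum over $U$ completes the argument.

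The heart of the lemma is the third assertion, which reduces to the scalar statement
$$\| \ind_{[0, \Delta T]} f \|_{H^b} \lesa \| f\|_{H^b}, \qquad |b| < \tfrac{1}{2},$$
uniformly in $\Delta T$. By duality and self-adjointness of the cutoff I may assume $0 < b < \tfrac{1}{2}$. Using the fractional Sobolev norm characterised by differences,
$$\| g\|_{H^b}^2 \approx \| g\|_{L^2}^2 + \int_{\RR^2} \frac{|g(t)-g(t')|^2}{|t-t'|^{1+2b}}\,dt\,dt',$$
I would split the double integral according to whether each of $t, t'$ lies inside or outside $[0, \Delta T]$. The inside-inside and outside-outside contributions are immediately controlled by $\|f\|_{H^b}^2$, and the two off-diagonal terms reduce, after integrating out the exterior variable, to $\int_0^{\Delta T} |f(t)|^2(|t|^{-2b} + |t - \Delta T|^{-2b})\,dt$, which is bounded by $\|f\|_{H^b}^2$ via Hardy's inequality applied at the endpoints $0$ and $\Delta T$. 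The main obstacle is precisely obtaining this $\Delta T$-uniformity; the condition $b < \tfrac{1}{2}$ is indispensable, as sharp cutoffs fail to be bounded on $H^{1/2}$, which is exactly why Hardy's inequality closes the estimate only in the open range.
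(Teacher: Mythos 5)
Your proposal follows essentially the same route as the paper: reduction to the scalar $H^b_t$ inequality via a Fourier-side change of variables, duality to normalize to $0 < b < \tfrac{1}{2}$, the difference-quotient characterisation of $H^b$, splitting the double integral by inside/outside $[0,\Delta T]$, and closing with Hardy's inequality at the two endpoints. The argument is correct; the only additions you make beyond the paper (a sketch of the well-known first estimate, and spelling out the infimum step for the second assertion) are standard and unobjectionable.
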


To control the solution to the Dirac equation we make use of the energy estimate for the $X^{s, b}_\pm$ spaces.
\begin{lemma}\label{lem energy est for Xsb}
Let $s \in \RR$, $b>\frac{1}{2}$, and $0<\Delta T<1$. Suppose $f \in H^s$, $F \in X^{s, b-1}_{\pm}(S_{\Delta T})$,  and let $u$ be the solution to
    \begin{equation*}\begin{split}  \p_t u \pm \p_x u &= F \\
                                  u(0)&=f. \end{split} \end{equation*}
Then $u \in X^{s, b}_\pm(S_{\Delta T})$ and we have the estimate
        $$ \| u \|_{X^{s, b}_\pm(S_{\Delta T})} \lesa \| f\|_{H^s} + \| F \|_{X^{s, b-1}_\pm(S_{\Delta T})}. $$
\end{lemma}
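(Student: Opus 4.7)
The plan is to reduce the local estimate to a global energy estimate on $\RR^{1+1}$ via extension and a smooth time cutoff. Using the infimum definition of the restriction norm, choose an extension $\widetilde{F} \in X^{s, b-1}_\pm$ of $F$ to $\RR^{1+1}$ satisfying $\|\widetilde{F}\|_{X^{s, b-1}_\pm} \les 2 \|F\|_{X^{s, b-1}_\pm(S_{\Delta T})}$. Let $\widetilde{u}$ denote the unique global solution to $(\p_t \pm \p_x)\widetilde{u} = \widetilde{F}$ with $\widetilde{u}(0) = f$, so that by uniqueness $\widetilde{u}|_{S_{\Delta T}} = u$. Fix once and for all a bump $\chi \in C^\infty_0(\RR)$ with $\chi \equiv 1$ on $[-1, 2]$; since $[0, \Delta T] \subset [-1, 2]$ uniformly in $\Delta T < 1$, the function $\chi \widetilde{u}$ agrees with $u$ on $S_{\Delta T}$, hence
    $$ \|u\|_{X^{s,b}_\pm(S_{\Delta T})} \les \|\chi \widetilde{u}\|_{X^{s,b}_\pm}. $$
It therefore suffices to establish the global bound $\|\chi \widetilde{u}\|_{X^{s, b}_\pm} \lesa \|f\|_{H^s} + \|\widetilde{F}\|_{X^{s, b-1}_\pm}$ with constant independent of $\Delta T$.

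Duhamel's formula decomposes $\widetilde{u}$ into a homogeneous piece $\widetilde{u}_H(t,x) = f(x \mp t)$ and an inhomogeneous piece $\widetilde{u}_I(t,x) = \int_0^t \widetilde{F}(s, x \mp (t-s))\, ds$. For the homogeneous piece, the space-time Fourier transform of $\chi(t)f(x \mp t)$ equals $\widehat{\chi}(\tau \pm \xi) \widehat{f}(\xi)$, and the Schwartz decay of $\widehat{\chi}$ gives immediately
    $$ \|\chi \widetilde{u}_H\|_{X^{s,b}_\pm}^2 = \int_\RR \al \xi \ar^{2s} |\widehat{f}(\xi)|^2 \Big( \int_\RR \al \eta \ar^{2b} |\widehat{\chi}(\eta)|^2 \, d\eta \Big) d\xi \lesa \|f\|_{H^s}^2. $$
For the inhomogeneous piece, inserting the Fourier representation of $\widetilde{F}$ and computing the $s$-integral explicitly produces a kernel of the form $\tfrac{e^{it(\sigma \mp \eta)} - 1}{i(\sigma \mp \eta)}$; one then splits based on whether the modulation variable $\sigma \mp \eta$ is large or small. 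In the large-modulation regime $|\sigma \mp \eta| \g 1$ one gains a factor of $\al \sigma \mp \eta \ar^{-1}$, precisely matching the drop from $b-1$ to $b$, while in the small-modulation regime $|\sigma \mp \eta| < 1$ a power series expansion of the kernel reduces the estimate term-by-term to the homogeneous bound above. This standard argument (see, e.g., \cite{Tao2006b}) yields
    $$ \|\chi \widetilde{u}_I\|_{X^{s,b}_\pm} \lesa \|\widetilde{F}\|_{X^{s, b-1}_\pm}. $$

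The main technical point is the Duhamel bound with the one-derivative loss in the modulation variable; this is well-known for Bourgain-Klainerman-Machedon spaces and crucially requires $b > \frac{1}{2}$, both to guarantee that the Sobolev embedding $H^b(\RR_t) \hookrightarrow L^\infty(\RR_t)$ controls pointwise time evaluation of the Duhamel integral and to ensure the dyadic sum over small modulations in the Taylor expansion converges. Since the cutoff $\chi$ is chosen independently of $\Delta T$, all constants are manifestly independent of $\Delta T$, and the stated continuous embedding $X^{s,b}_\pm(S_{\Delta T}) \subset C([0, \Delta T], H^s)$ for $b>\tfrac{1}{2}$ confirms that $u$ is indeed a continuous $H^s$-valued curve as required.
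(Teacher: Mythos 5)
The paper states Lemma \ref{lem energy est for Xsb} without proof, treating it as a standard fact (it points to \cite{Tao2006b} for the companion Lemma \ref{lem time dilation for Xsb}); your argument is precisely that standard proof and is correct. In particular, fixing the cutoff $\chi$ once and for all with $\chi\equiv 1$ on $[-1,2]\supset[0,\Delta T]$ is exactly the right way to make the implicit constant uniform in $\Delta T<1$, and the reduction of the inhomogeneous piece to a modulation split of the Duhamel kernel is the textbook route.

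Two small remarks, neither of which affects the argument. First, the characteristics of $\p_t\pm\p_x$ are $x\mp t=\text{const}$, so after inserting the Fourier representation the Duhamel kernel comes out as $\frac{e^{it(\sigma\pm\eta)}-1}{i(\sigma\pm\eta)}$, consistent with the weight $\al\tau\pm\xi\ar$ in $X^{s,b}_\pm$; you wrote $\sigma\mp\eta$, a sign typo. Second, the place where $b>\tfrac12$ is indispensable in the modulation split is the finiteness of $\int_{|\sigma|\geqslant 1}|\sigma|^{-2b}\,d\sigma$, which (via Cauchy--Schwarz against $\|\al\sigma\ar^{b-1}\widehat{F}\|_{L^2}$) makes the constant $\int_{|\sigma|\geqslant 1}\widehat{F}(\sigma)/\sigma\,d\sigma$ well-defined in the large-modulation contribution; the Sobolev embedding $H^b(\RR_t)\hookrightarrow L^\infty$ that you invoke is really what gives the separate embedding $X^{s,b}_\pm(S_{\Delta T})\subset C([0,\Delta T],H^s)$ recorded at the end of the introduction, rather than anything in the Duhamel estimate itself.
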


We also require the $H^{r, b}$ versions of the above results.

\begin{lemma} \label{lem time dilation for Hrb}
Let $r \in \RR$, $0<\Delta T<1$, and $\nu \in C^\infty_0(\RR)$. Then if $- \frac{1}{2} < b_1 \les b_2 < \frac{1}{2}$ we have
            $$ \Big\| \nu\Big( \frac{t}{\Delta T} \Big) u(t, x) \Big\|_{H^{r, b_1}} \lesa \Delta T^{b_2 - b_1 } \| u \|_{H^{r, b_2}}.$$
Consequently, we have $ \| u \|_{H^{r, b_1}(S_{\Delta T})} \lesa \Delta T^{b_2 - b_1} \| u \|_{H^{r, b_2}(S_{\Delta T})}$.
\end{lemma}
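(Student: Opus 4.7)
The plan follows the strategy used in the proof of Corollary \ref{cor wave-sobolev and X^sb estimate}: decompose $u$ into pieces on which the $H^{r, b}$ weight collapses to an $X^{r, b}_{\pm}$ weight, and then invoke Lemma \ref{lem time dilation for Xsb}. The wrinkle is that multiplication by $\nu(t/\Delta T)$ does not preserve the frequency region $\{\tau\xi \gtrless 0\}$, so after multiplying we must re-express the resulting $X^{r, b_1}_{\pm}$ norm back in terms of an $H^{r, b_1}$ norm, a step which works cleanly only when $b_1 \geq 0$.

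First I would treat the range $0 \leq b_1 \leq b_2 < \frac{1}{2}$. Write $u = u^+ + u^-$ where $\widetilde{u^+}$ is supported in $\{\tau\xi \geq 0\}$ and $\widetilde{u^-}$ in $\{\tau\xi < 0\}$. On $\text{supp}\,\widetilde{u^+}$ the weights agree, $\langle |\tau|-|\xi|\rangle = \langle\tau - \xi\rangle$, so $\|u^+\|_{H^{r, b_2}} = \|u^+\|_{X^{r, b_2}_-}$, and analogously $\|u^-\|_{H^{r, b_2}} = \|u^-\|_{X^{r, b_2}_+}$. Applying Lemma \ref{lem time dilation for Xsb} to each piece gives
$$\Big\| \nu\Big(\tfrac{t}{\Delta T}\Big) u^\pm\Big\|_{X^{r, b_1}_\mp} \lesa \Delta T^{b_2 - b_1}\|u^\pm\|_{H^{r, b_2}}.$$
Since the reverse triangle inequality yields $\langle|\tau|-|\xi|\rangle \leq \langle\tau \pm \xi\rangle$ pointwise on all of $\RR^2$, and $b_1 \geq 0$, one has $\|g\|_{H^{r, b_1}} \leq \|g\|_{X^{r, b_1}_\pm}$ for every $g$; summing over the two pieces via the triangle inequality closes the estimate.

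The remaining ranges follow from two standard manoeuvres. For $-\frac{1}{2} < b_1 \leq b_2 \leq 0$, duality handles it: since $(H^{r, b})^\ast = H^{-r, -b}$ and the adjoint of multiplication by $\nu(t/\Delta T)$ is multiplication by $\overline{\nu}(t/\Delta T)$, testing against $\varphi \in H^{-r, -b_1}$ and invoking the positive-index case already proven for $\overline{\nu}(t/\Delta T)\varphi$ with exponents $-b_2 \leq -b_1$ in $[0, \tfrac{1}{2})$ delivers the bound. For the mixed case $-\frac{1}{2} < b_1 < 0 < b_2 < \frac{1}{2}$, I would factor through $b = 0$: choose $\tilde\nu \in C^\infty_0(\RR)$ with $\tilde\nu \equiv 1$ on $\text{supp}\,\nu$, so that $\nu(t/\Delta T) u = \tilde\nu(t/\Delta T)\bigl[\nu(t/\Delta T) u\bigr]$, chain the positive-index case to get $\|\nu(t/\Delta T) u\|_{H^{r, 0}} \lesa \Delta T^{b_2}\|u\|_{H^{r, b_2}}$ with the negative-index case to get $\|\tilde\nu(t/\Delta T)[\nu(t/\Delta T) u]\|_{H^{r, b_1}} \lesa \Delta T^{-b_1}\|\nu(t/\Delta T) u\|_{H^{r, 0}}$, and multiply.

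The localized statement is then immediate: take an almost-optimal extension $U \in H^{r, b_2}$ with $\|U\|_{H^{r, b_2}} \lesa \|u\|_{H^{r, b_2}(S_{\Delta T})}$, pick a cutoff $\nu \in C^\infty_0(\RR)$ with $\nu \equiv 1$ on $[0, 1]$, observe that $\nu(t/\Delta T) U$ agrees with $u$ on $S_{\Delta T}$, and apply the global estimate just proven. The only real conceptual obstacle is the non-commutation of multiplication by $\nu(t/\Delta T)$ with the frequency projection onto $\{\tau\xi \gtrless 0\}$; this forces the slight detour through the pointwise weight inequality in the positive case together with the duality and factoring arguments for the other signs of $b_1, b_2$. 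Everything else is essentially a reduction to the already-established estimate for $X^{s, b}_\pm$.
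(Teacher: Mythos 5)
The paper states this lemma without any proof (in contrast to the $X^{s,b}_\pm$ analogue, Lemma~\ref{lem time dilation for Xsb}, for which a sketch is supplied), so there is no internal argument to compare against; one has to supply the reduction oneself, and your proof does this correctly. The decomposition $u = u^+ + u^-$ into the frequency regions $\{\tau\xi \geq 0\}$ and $\{\tau\xi < 0\}$, on which the weight $\langle|\tau|-|\xi|\rangle$ collapses to $\langle\tau-\xi\rangle$ and $\langle\tau+\xi\rangle$ respectively (so that $\|u\|_{H^{r,b}}^2 = \|u^+\|_{X^{r,b}_-}^2 + \|u^-\|_{X^{r,b}_+}^2$), is exactly the device the paper itself uses to derive Corollary~\ref{cor wave-sobolev and X^sb estimate} from Theorem~\ref{thm main X^sb esimate}, so this is the natural adaptation. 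Your diagnosis of the obstacle is also accurate: multiplication by $\nu(t/\Delta T)$ does not commute with the projections onto $\{\tau\xi \gtrless 0\}$, so $\nu(t/\Delta T)u^\pm$ leaks into the opposite region, and one must recover the $H^{r,b_1}$ norm of the output from its $X^{r,b_1}_\mp$ norm. The pointwise bound $\langle|\tau|-|\xi|\rangle \leq \langle\tau\pm\xi\rangle$ does this cleanly only for $b_1 \geq 0$, which is precisely why the duality argument for $-\frac{1}{2} < b_1 \leq b_2 \leq 0$ and the factoring through $b=0$ for $b_1 < 0 < b_2$ are genuinely needed rather than cosmetic. All three regimes and the passage to the localized $S_{\Delta T}$ statement (choosing $\nu \equiv 1$ on $[0,1]$ and a near-optimal extension) are handled correctly.
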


\begin{lemma} \label{lem energy est for Hrb}
Let $r \in \RR$, $b>\frac{1}{2}$, $0<\Delta T<1$, and $m\in \RR$. Suppose $f \in H^r$, $g \in H^{r-1}$, and $F \in H^{r-1, b-1}(S_{\Delta T})$ and let $u$ be the solution to
    \begin{equation*}\begin{split} \Box u  &= m^2 u +  F \\
                                  u(0)&=f, \qquad \p_t u(0) = g. \end{split} \end{equation*}
Then $u \in \mathcal{H}^{r, b}(S_{\Delta T})$ and we have the estimate
        $$ \| u \|_{\mathcal{H}^{r, b}(S_{\Delta T})} \lesa \| f\|_{H^r} + \| g\|_{H^{r-1}} + \| F \|_{H^{r-1, b-1}(S_{\Delta T})}. $$
\begin{proof}
  See \cite{Tesfahun2009}.
\end{proof}
\end{lemma}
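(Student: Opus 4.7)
The plan is to mirror the standard Bourgain-Klainerman-Machedon type energy estimate for wave-Sobolev spaces, suitably adapted to the Klein-Gordon case. Rearranging the equation as $(\p_t^2 - \p_x^2 + m^2) u = -F$ and invoking linearity, the task reduces to bounding separately the homogeneous solution $u_h$ (with data $(f, g)$ and zero forcing) and the Duhamel solution $u_p$ (with zero data and forcing $-F$).

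For the homogeneous piece I would write $u_h(t, x) = \cos(t \omega) f + \omega^{-1} \sin(t \omega) g$, where $\omega$ denotes the spatial Fourier multiplier $\omega(\xi) = \sqrt{\xi^2 + m^2}$. Picking $\nu \in C^\infty_0(\RR)$ with $\nu \equiv 1$ on $[0, 1]$, the function $\nu(t) u_h(t, x)$ extends $u_h|_{S_{\Delta T}}$ to all of $\RR^{1+1}$ for every $\Delta T < 1$. Its space-time Fourier transform concentrates near the two sheets $\tau = \pm \omega(\xi)$ of the Klein-Gordon hyperboloid, and on these sheets $\langle |\tau| - |\xi| \rangle = \langle \omega(\xi) - |\xi| \rangle \lesa \langle m \rangle$ uniformly in $\xi$. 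The wave-Sobolev weight $\langle |\tau| - |\xi|\rangle^b$ therefore contributes only a bounded constant, and a direct Fourier-side computation using Plancherel and the rapid decay of $\widehat{\nu}$ yields the bound $\|\nu u_h\|_{\mathcal{H}^{r,b}} \lesa \|f\|_{H^r} + \|g\|_{H^{r-1}}$.

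For the inhomogeneous piece I would first extend $F$ from $S_{\Delta T}$ to an element $\widetilde{F} \in H^{r-1, b-1}(\RR^{1+1})$ with comparable norm, using the definition of the localised space. Duhamel's formula produces an extension $\widetilde u_p(t) = \int_0^t \omega^{-1} \sin((t - s) \omega)\widetilde F(s)\,ds$ of $u_p$ to positive times; one then multiplies by a time cutoff and analyses the resulting object in Fourier variables by splitting the resolvent of $\omega^2 - \tau^2 = (\omega - \tau)(\omega + \tau)$ into a piece concentrated near the hyperboloid $\{||\tau| - \omega(\xi)| \lesa 1\}$ and an elliptic far piece. On the elliptic piece the symbol is comparable to $\langle \xi \rangle \langle |\tau| - |\xi|\rangle$, giving a trivial multiplier bound. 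The near-hyperboloid piece reduces to estimating $(\omega - \tau)^{-1}$ tested against the time cutoff, which is precisely where the hypothesis $b > \tfrac{1}{2}$ is essential: it allows one to use the embedding $H^b_t \hookrightarrow L^\infty_t$ to absorb the cutoff and recover the full power of $\langle |\tau| - |\xi|\rangle$ needed to pass from the $b - 1$ weight on $\widetilde F$ to the $b$ weight on $\widetilde u_p$.

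The only substantive technical step is the near-hyperboloid analysis of the inhomogeneous piece; the remainder is direct multiplier arithmetic. Since the mass term merely shifts the characteristic variety by a bounded amount and has no effect on the cone analysis, the argument is essentially identical to the massless wave case treated in \cite{Tao2006b}, and the detailed calculation is carried out in \cite{Tesfahun2009}.
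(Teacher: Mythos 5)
The paper does not prove this lemma; the ``proof'' is a bare citation to \cite{Tesfahun2009}, which in turn rests on the standard $X^{s,b}$-energy estimate machinery (Duhamel's formula plus a time cutoff, as in \cite{Tao2006b}). Your sketch -- splitting into homogeneous and inhomogeneous pieces, observing that the modulation $\langle |\tau|-|\xi|\rangle$ stays $\lesssim\langle m\rangle$ on the Klein--Gordon characteristic shell, and treating the Duhamel term by extending $F$ and decomposing the resolvent into near- and far-hyperboloid regions -- follows that same standard route and is correct in outline, so this is essentially the same approach. One small imprecision worth flagging: the role of $b>\tfrac12$ in the Duhamel estimate is not really via $H^b_t\hookrightarrow L^\infty_t$; in the standard argument it enters through the convergence of the Cauchy--Schwarz integral $\int_{|\lambda|\gtrsim 1}\langle\lambda\rangle^{2(1-b)}\lambda^{-2}\,d\lambda$ that controls the boundary (free-wave) term produced when one splits $\int_0^t$ into its resolvent pieces, while the $L^\infty_t$ embedding is what gives $\mathcal{H}^{r,b}(S_{\Delta T})\hookrightarrow C([0,\Delta T],H^r)\cap C^1([0,\Delta T],H^{r-1})$ so the initial conditions make sense.
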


\section{Global Well-Posedness for the Dirac-Klein-Gordon Equation}\label{sec gwp for DKG}

We are now ready to consider the proof of global well-posedness for the DKG equation. To uncover the null structure for the DKG equation, we let $\psi =( \psi_+,  \psi_-)^T$.  Then the DKG equation (\ref{DKG general form}) can be written as
                \begin{equation}\label{DKG equation}
                \begin{split}
                    \p_t \psi_{\pm} \pm \p_x \psi_{\pm} &= -i M\psi_{\mp} + i \phi \psi_{\mp} \\
                                \Box \phi &= m^2 \phi - 2 \Re \big(\psi_+ \overline{\psi}_- \big)
                \end{split}
                \end{equation}
with initial data
            \begin{equation}\label{DKG data}    \psi_{\pm}(0) = f_{\pm} \in H^s, \qquad\phi(0) = \phi_0 \in H^r, \qquad \p_t \phi(0) = \phi_1\in H^{r-1}. \end{equation}
Note that the right hand side of (\ref{DKG equation}) has the bilinear product $\psi_+ \overline{\psi}_-$, which, as we have seen in Theorem \ref{thm main X^sb esimate}, behaves significantly better than the corresponding product with $++$. The $+-$ structure can also be seen in the term $\phi \psi_\pm$ via a duality argument \cite{Selberg2008}. These are the key observations used in the local well-posedness theory for the DKG equation.

To prove the global well-posedness result of Theorem \ref{thm DKG in intro}, by the local well-posedness result in \cite{Selberg2008}, it suffices to prove that the data norms $\| \psi_\pm(T) \|_{H^s}$, $\| u[T] \|_{H^r}$ remain finite for all large times $ 0<T<\infty$. To this end, we make use of the $I$-method together with ideas from \cite{Selberg2007} and \cite{Tesfahun2009}. Let $\rho_0 \in C^{\infty}$ be even, decreasing, and satisfy
        $$\rho_0(\xi) = \begin{cases} 1 \qquad &|\xi|<1\\
                                      |\xi|^s   &|\xi|>2. \end{cases} $$
Let $\rho(\xi) = \rho_0\Big(\frac{|\xi|}{N}\Big)$ and define the $I$ operator by $\widehat{I\psi}(\xi) = \rho(\xi) \widehat{\psi}(\xi).$
We have the following straightforward estimates. Firstly, since $s<0$, we have for any $\sigma \in \RR$,
            \begin{equation}\label{Imethod implies control of H^s norm} \| f\|_{H^\sigma} \lesa \| I f\|_{H^{\sigma - s}} \lesa N^{-s} \| f\|_{H^\sigma}. \end{equation}
In particular, by taking $\sigma=0$, we observe that to obtain control over $\| \psi(t) \|_{H^s_x}$, it suffices to estimate $\| I\psi(t)\|_{L^2_x}$. Secondly, if $\supp \widehat{g}\subset \{ |\xi| \gtrsim N \}$, $s<0$, and $s_1<s_2$, then we can trade regularity for decay in terms of $N$,
            \begin{equation}\label{Imethod can trade regularity for powers of N} \| g \|_{H^{s_1}} \lesa N^{s_1 - s_2} \| g \|_{H^{s_2}} \approx N^{s_1 - s_2 + s} \| Ig \|_{H^{s_2 - s}}. \end{equation}
Thirdly, we note that the $I$ operator is the identity on low frequencies, so if $\supp \widehat{f} \subset \{ |\xi| < N\}$ then $ If = f$. Finally, if $f$ is real-valued, then $If$ is also real-valued since $\rho$ was assumed to be even.

The $I$-method proceeds as follows. Assume we have a local solution
    $$\psi_{\pm} \in C\big([0, \Delta T], H^s\big), \qquad \phi \in C\big([0, \Delta T], H^r\big) \cap C^1\big( [0, \Delta T], H^{r-1}\big)$$
to (\ref{DKG equation}), (\ref{DKG data}). Note that from (\ref{Imethod implies control of H^s norm}) we have $I\psi(t) \in L^2_x$. We would like to use the conservation of charge to control $\| I\psi(t)\|_{L^2_x}$. However  $I\psi$ is no longer a solution to (\ref{DKG equation}) and so we can not expect $\| I \psi(t) \|_{L^2_x}$ to be conserved. Despite this, if we follow the proof of conservation of charge, then
    \begin{align}
        \p_t \int_{\RR}  |I \psi_+(t)|^2 + | I \psi_- (t)|^2 dx
                    &=2\Re\bigg( \int_\RR \overline{I\psi}_+ \p_t I \psi_+ + \overline{I\psi}_- \p_t I \psi_- dx \bigg)\notag \\
                    &=2\Re\bigg( \int_\RR \overline{I\psi}_+ \big( - \p_x I \psi_+ - i M  I \psi_- + i I(\phi \psi_-) \big) \notag \\
                    &\qquad \qquad \qquad \qquad + \overline{I\psi}_-  \big( \p_x I \psi_- - i M  I \psi_+ + i I(\phi \psi_+) \big) dx \bigg)\notag\\
                    &=2\Re\bigg(  i \int_\RR \overline{I\psi}_+ I(\phi \psi_-)  + \overline{I\psi}_- \p_t I(\phi \psi_+) dx \bigg). \label{derivation of almost conservation law}
        \end{align}
Now as $\phi$ is real-valued, $I^2\phi$ is also real-valued and hence
    $$ 2\Re\Big( i I^2 \phi \big( \overline{I \psi}_+ I \psi_- + \overline{I\psi}_- I \psi_+\big) \Big)=0.$$
Subtracting this term from (\ref{derivation of almost conservation law}) and using the fundamental theorem of Calculus then gives
    \begin{align}
        \sup_{t' \in [0, \Delta T] } \big( \| I \psi_+(t')  \|_{L^2_x}^2 + \| I \psi_-(t') &\|_{L^2_x}^2 \big)
        \les \| f_+ \|_{L^2}^2 + \| f_- \|_{L^2}^2 \notag\\
            & + 2 \sum_{\pm} \sup_{t' \in [0, \Delta T]} \Big| \int_0^{t'} \int_\RR \big( I(\phi \psi_\pm) - I^2 \phi I \psi_\pm\big) \overline{I \psi}_\mp dx dt\Big|.\label{growth of Ipsi} \end{align}
Thus provided we can show the last term in (\ref{growth of Ipsi}) is small, we can deduce that over a small time $[0, \Delta T]$, $\| I \psi_\pm(t)\|_{L^2}$ does not grow to large. The first step in this direction is the following.

\begin{lemma}\label{lem smoothing}
Let $\frac{-1}{4}<s<0$ and  $-s< r\les 1+2s$. Assume $b=\frac{1}{2}+\epsilon$ with $\epsilon>0$ sufficiently small. Then for any   $\Delta T \ll 1$, $N \gg 1$ we have
        \begin{align}
        \sup_{t' \in [0, \Delta T]} \Big| \int_0^{t'} \int_\RR \big(I(\phi &u) - I^2 \phi I u\big)\overline{I v} \,dx dt \Big| \notag\\
        &\lesa \Delta T^{\frac{1}{2} - 2\epsilon} N^{2s - r + 2\epsilon} \| I^2 \phi \|_{H^{r-2s, b}(S_{\Delta T})} \| Iu \|_{X^{0, b}_\pm (S_{\Delta T})} \| I v \|_{X^{0, b}_{\mp}(S_{\Delta T})}\label{lem smoothing - main eqn}\end{align}
where $S_{\Delta T} = [0, \Delta T] \times \RR$.
\begin{proof}
See Subsection \ref{subsec - proof of lem smoothing} below.
\end{proof}
\end{lemma}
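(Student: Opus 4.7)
The plan is to reduce the claim to a trilinear estimate on all of $\RR^{1+1}$, which is then handled by a dyadic frequency decomposition exploiting the cancellation in the commutator $I(\phi u) - I^2 \phi \cdot Iu$, followed by an application of Corollary~\ref{cor wave-sobolev and X^sb estimate} on each dyadic piece.

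First I would remove the localisation to $[0, \Delta T]$ and the supremum over $t'$. Writing $\int_0^{t'} = \int_{\RR} \ind_{[0, t']}(t)$ and placing $\ind_{[0, t']}$ on one of the three factors, Lemmas~\ref{lem time dilation for Xsb} and~\ref{lem time dilation for Hrb} let me lower that factor's $b$-exponent from $\tfrac12 + \epsilon$ down to $3\epsilon$ at the price of exactly $\Delta T^{1/2 - 2\epsilon}$. It then suffices to prove
\begin{equation*}
  \Big| \int_{\RR^{1+1}} \bigl[I(\phi u) - I^2\phi \cdot Iu\bigr]\,\overline{Iv}\, dx\,dt \Big| \lesa N^{2s - r + 2\epsilon}\, \|I^2\phi\|_{H^{r-2s, b}}\, \|Iu\|_{X^{0, b}_{\pm}}\, \|Iv\|_{X^{0, b}_{\mp}}
\end{equation*}
on $\RR^{1+1}$, with one of the $b$'s replaced by $3\epsilon$ according to which factor carried the cutoff.

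By Parseval, the left-hand side is a Fourier-space trilinear form on $\hat\phi(\xi_1)\hat u(\xi_2)\overline{\hat v(\xi_1+\xi_2)}$ with symbol
\begin{equation*}
  m(\xi_1, \xi_2) = \rho(\xi_1+\xi_2)\bigl[\rho(\xi_1+\xi_2) - \rho(\xi_1)^2 \rho(\xi_2)\bigr].
\end{equation*}
The key cancellation is that $m \equiv 0$ on $\{|\xi_1|, |\xi_2|, |\xi_1+\xi_2| < N\}$, since there every factor of $\rho$ equals $1$. I would then decompose $\phi, u, v$ into Littlewood--Paley pieces at dyadic scales $N_1, N_2, N_3$; momentum conservation forces $N_{max} \approx N_{med}$, and the preceding vanishing restricts the sum to $N_{max} \gtrsim N$. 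Applying Corollary~\ref{cor wave-sobolev and X^sb estimate} on each dyadic piece at a choice of exponents slightly below the natural scaling (for instance, $r_c = 0$ on the $\phi$-factor, which still satisfies the condition $r_c \g 0$), and converting to the $I$-normalised norms via $\|I^2 \phi_{N_1}\|_{H^{r-2s, b}} \approx \rho(N_1)^2 N_1^{r-2s} \|\phi_{N_1}\|$ and analogous identities for $u, v$, one obtains in the worst dyadic configuration a bound of the schematic form
\begin{equation*}
  \lesa N^{3s} N_{max}^{-r - s}\, \|I^2\phi_{N_1}\|_{H^{r-2s, b}}\, \|Iu_{N_2}\|_{X^{0, b}_\pm}\, \|Iv_{N_3}\|_{X^{0, b}_\mp}.
\end{equation*}
Summing over $N_1, N_2, N_3$ with $N_{max} \gtrsim N$ (absorbing a logarithmic Cauchy--Schwarz loss on the low-frequency sum into an $N^\epsilon$ factor) produces the claimed $N^{2s - r + 2\epsilon}$.

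The main obstacle is this final dyadic summation. The multiplier $m$ itself is only $O(1)$ in the hardest regime (when the high frequencies sit on $\phi$ and $u$ while the output frequency of $\phi u$ is below $N$), so the decay in $N$ must come not from a pointwise gain in $m$ but from the interplay of the $I$-weights, the scaling weights afforded by Corollary~\ref{cor wave-sobolev and X^sb estimate}, and the restriction $N_{max} \gtrsim N$. The hypothesis $-\tfrac14 < s < 0$ ensures the conditions on $s_1, s_2$ in~(\ref{thm main X^sb esitimate - cond on s}) are compatible with $b_j = \tfrac12 + \epsilon$; the lower bound $r > -s$ is equivalent to $r + s > 0$, which is exactly what is needed for the geometric sum $\sum_{N_{max} \gtrsim N} N_{max}^{-r-s} \approx N^{-r-s}$ to converge; and the upper bound $r \les 1+2s$ arises from balancing the weights in the dyadic configuration where all three frequencies are comparable.
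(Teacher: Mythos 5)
Your high-level plan matches the paper's in several respects: both reduce the $\sup_{t'}$ to an estimate on $S_{\Delta T}$, both isolate the commutator symbol $\rho(\xi)-\rho(\xi-\eta)^2\rho(\eta)$ and exploit its vanishing on low--low interactions, and both reduce to product estimates from Section~\ref{sec bilinear est}. However, the treatment of the high--high interaction (both $\phi$ and $u$ with frequency $\gtrsim N$) has a genuine gap.

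Your $\Delta T$-extraction scheme lowers a \emph{single} $b$-exponent from $b=\tfrac12+\epsilon$ to $3\epsilon$, then applies Corollary~\ref{cor wave-sobolev and X^sb estimate} on dyadic pieces. The problem is that this forces $b_{\min}=3\epsilon$ (or, if the lowered slot is one of the $X^\pm$ factors, forces $s_1+s_2+r>\tfrac12-b_k$ with $b_k=3\epsilon$, which is even worse). With $b_3=3\epsilon$ and $b_1=b_2=\tfrac12+\epsilon$, the Corollary requires $s_1+s_2>-3\epsilon$ and $s_1+s_2+r>-\epsilon$. Now test the all-high configuration $N_1\approx N_2\approx N_3\approx M\gtrsim N$. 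Converting the three $I$-weighted norms to unweighted ones gives factors $N^{-2s}M^r$, $N^{-s}M^s$, $N^{-s}M^s$, so the dyadic estimate you need reads $M^{s_1+s_2+r_c}\lesa N^{-r-2s+2\epsilon}M^{r+2s}$, which forces $s_1+s_2+r_c\les\min(r+2s,\,2\epsilon)$. But the hypotheses allow $r+2s<0$ (e.g.\ $s=-0.2$, $r=0.3$ gives $r+2s=-0.1$, and $-1/4<s<0$, $-s<r\les1+2s$ are satisfied); then $s_1+s_2+r_c\les -0.1$ contradicts the Corollary's requirement $s_1+s_2+r_c>-\epsilon$. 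So the Corollary with $b_3=3\epsilon$ simply cannot close this case, and a similar obstruction appears for any other single choice of lowered $b$.

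What the paper does here is crucially different and cannot be reconstructed from Corollary~\ref{cor wave-sobolev and X^sb estimate} alone: it splits $\phi_{hi}=\phi_{hi}^+ +\phi_{hi}^-$ by the sign of $\tau\xi$ and applies Theorem~\ref{thm main X^sb esimate} \emph{directly}, placing $\phi^+_{hi}$ in the same $X^+$ slot as $u$ (Case~4a) and $\phi^-_{hi}$ in the same $X^-$ slot as $v$ (Case~4b), with \emph{different} exponent choices in the two sub-cases. It also extracts $\Delta T$ by lowering \emph{two} $b$'s to $\tfrac14$ (rather than one to $3\epsilon$), so $b_{\min}=\tfrac14$ and the critical condition $s_2+s_3=s>-b_{\min}$ becomes exactly $s>-\tfrac14$, matching the hypothesis. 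The Corollary, by construction, couples the two sign cases and forces a single choice of exponents for both; this loses precisely the flexibility one needs. As a minor point, the statement ``$r_c=0$ still satisfies the condition $r_c\g0$'' does not correspond to a hypothesis of the Corollary — the relevant conditions are $s_k+r\g 0$, not $r\g 0$ on its own.
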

\begin{remark}
The use of $I^2\phi$ instead of just $\phi$ or $I \phi$ on the right hand side of (\ref{lem smoothing - main eqn}) may require some explanation. Roughly speaking, the larger the negative exponent on $N$ in (\ref{lem smoothing - main eqn}), the better the eventual GWP result will be. Moreover, an examination of the proof of Lemma \ref{lem smoothing} shows that the exponent on $N$ depends entirely on the number of derivatives on $\phi$. In other words, we could replace the term $N^{2s - r} \| I^2\phi \|_{H^{r-2s, b}}$ with $N^{ks -r} \| I^k \phi \|_{H^{r-ks, b}}$ for any $k \in \NN$ (provided $r-ks \les 1$). However, the size of $\phi$ with respect to $N$ ends up being of the order $N^{-2s}$. This follows by observing that schematically $\phi$ is a solution to $\Box \phi = \psi^2$, and by (\ref{Imethod implies control of H^s norm}), the low frequency component of $\psi^2$ is essentially of size $N^{-2s}$. Thus it is natural to take $I^2 \phi$, which via (\ref{Imethod implies control of H^s norm}), also has size roughly $N^{-2s}$.
\end{remark}
\begin{remark}
The powers of $\Delta T$ and $N$ on the right hand side of (\ref{lem smoothing - main eqn}) are essentially sharp if we are working in the spaces $X^{s, b}_\pm$, $H^{s, b}$. This follows from the counter examples in Section \ref{sec counter examples} together with a scaling argument.
\end{remark}

Lemma \ref{lem smoothing} allows us to estimate the growth of $\| I \psi_\pm(t)\|_{L^2}$ on $[0, \Delta T]$, provided that we can control the size of the norms $\| I \psi_\pm\|_{X^{0, b}_\pm(S_{\Delta T} )}$ and $\| I^2 \phi \|_{H^{r-2s, b}(S_{\Delta T})}$.  This control is provided by a modification of the usual local well-posedness theory.

\begin{lemma}\label{lem mod lwp}
Let $\frac{-1}{6} < s<0$,  $-s<r \les \frac{1}{2} + 2s$, and $b= \frac{1}{2} + \epsilon$ with $\epsilon>0$ sufficiently small. Assume $f_\pm \in H^s$ and $\phi[0] \in H^r \times H^{r-1}$. Choose $\Delta T \ll 1$ and $N \gg 1$ such that
       \begin{equation} \label{lem mod lwp - required est for wave data} \Big( \Delta T^{\frac{1}{2} + r-2s - 3 \epsilon} + N^{-r+2s + 2\epsilon}\Big) \| I^2\phi[0] \|_{H^{r-2s}} \ll 1 \end{equation}
and
         \begin{equation} \label{lem mod lwp - required est for spinor data} \Big( \Delta T^{1 - \epsilon} + N^{  - \frac{1}{2}+ 2\epsilon}\Big) \Big(  \| If_+ \|_{L^2} + \| If_-\|_{L^2}\Big)^2 \ll 1. \end{equation}
Then the Dirac-Klein-Gordon equation (\ref{DKG equation}) with initial data (\ref{DKG data}) is locally well-posed on the domain $S_{\Delta T}=[0, \Delta T]\times \RR$. Moreover, the solution $(\psi, \phi)$ satisfies
            $$ \| I \psi_+ \|_{X^{0, b}_+(S_{\Delta T})} + \| I \psi_- \|_{X^{0, b}_-(S_{\Delta T})} \lesa \| If_+\|_{L^2} + \| If_-\|_{L^2}$$
and
            $$ \| I^2 \phi \|_{\mathcal{H}^{r-2s, b}(S_{\Delta T}) } \lesa \| I^2 \phi[0]\|_{H^{r-2s}} + \big( \| If_+\|_{L^2} + \| If_-\|_{L^2}\big)^2.$$
\begin{proof}
  See Subsection \ref{subsec - proof of lem mod lwp} below.
\end{proof}
\end{lemma}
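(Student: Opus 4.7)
The plan is to run a Picard iteration for the unknowns $(I\psi_+, I\psi_-, I^2\phi)$ in the product Banach space
\[
X^{0,b}_+(S_{\Delta T}) \times X^{0,b}_-(S_{\Delta T}) \times \mathcal{H}^{r-2s,b}(S_{\Delta T}),
\]
using Duhamel's formula and the energy estimates of Lemmas \ref{lem energy est for Xsb} and \ref{lem energy est for Hrb}. Applying $I$ (respectively $I^2$) to the Duhamel formulas for $\psi_\pm$ and $\phi$, and invoking those energy estimates, the proof reduces to the two trilinear-type bounds
\[
\| I(\phi \psi_\mp)\|_{X^{0,b-1}_\pm(S_{\Delta T})} \lesa \Delta T^{\delta}\, \| I^2 \phi\|_{H^{r-2s,b}(S_{\Delta T})} \, \| I \psi_\mp\|_{X^{0,b}_\mp(S_{\Delta T})}
\]
and
\[
\| I^2(\psi_+ \overline{\psi}_-)\|_{H^{r-2s-1,b-1}(S_{\Delta T})} \lesa \Delta T^{\delta}\, \| I \psi_+\|_{X^{0,b}_+(S_{\Delta T})} \, \| I \psi_-\|_{X^{0,b}_-(S_{\Delta T})}
\]
for some $\delta>0$, together with the trivial linear estimate $\|M I\psi_\mp\|_{X^{0,b-1}_\pm(S_{\Delta T})} \lesa \Delta T^{1-\epsilon}\|I\psi_\mp\|_{X^{0,b}_\mp(S_{\Delta T})}$ for the mass term, which uses Lemma \ref{lem time dilation for Xsb}.

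To obtain these two bilinear estimates, I first absorb a small power of $\Delta T$ using Lemmas \ref{lem time dilation for Xsb} and \ref{lem time dilation for Hrb}, lowering the temporal weight $b-1$ slightly below zero. I then transfer the $I$ and $I^2$ multipliers across the product on the Fourier side: if $\xi = \xi_1 + \xi_2$, one has pointwise $\rho(\xi) \lesa \rho(\xi_1)\rho(\xi_2) \langle \xi\rangle^{-s}\big(\langle\xi_1\rangle\langle\xi_2\rangle\big)^{s}$ up to a harmless factor, because $\rho$ is even, decreasing in $|\xi|$, and behaves as $|\xi|^s$ for $|\xi| \gtrsim N$; the potentially bad case where $|\xi|\ll\max\{|\xi_1|,|\xi_2|\}\gtrsim N$ is the one in which the gain of the $I^2$ on $\phi$ (which gives the power $N^{-2s}$ and the extra derivative budget $r-2s$) compensates exactly for the loss in $I$ on the product. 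After this pointwise manipulation, the two estimates reduce to Theorem \ref{thm main X^sb esimate} and its Corollary \ref{cor wave-sobolev and X^sb estimate} applied at the parameter values $(s_1,s_2,s_3)=(0,0,r-2s)$ and suitable $(b_1,b_2,b_3)=(b,b,1-b)$ or $(1-b,b,b)$; one checks that for $-\tfrac16<s<0$ and $-s<r\les \tfrac12+2s$ all the sharp inequalities of Theorem \ref{thm main X^sb esimate} are strictly satisfied, with a small margin that converts into the positive power $\Delta T^{\delta}$.

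With these nonlinear estimates in hand, Duhamel plus the energy estimates give the system of inequalities
\[
A \lesa \| If_+\|_{L^2} + \| If_-\|_{L^2} + \Delta T^{\delta} A B, \qquad B \lesa \| I^2\phi[0]\|_{H^{r-2s}} + \Delta T^{\delta} A^2,
\]
where $A = \| I\psi_+\|_{X^{0,b}_+(S_{\Delta T})} + \| I\psi_-\|_{X^{0,b}_-(S_{\Delta T})}$ and $B=\| I^2\phi\|_{\mathcal{H}^{r-2s,b}(S_{\Delta T})}$; a routine bootstrap using the assumed smallness (\ref{lem mod lwp - required est for wave data})--(\ref{lem mod lwp - required est for spinor data}) closes the iteration on a ball of radius $\lesa \| I^2\phi[0]\|_{H^{r-2s}}+(\|If_+\|_{L^2}+\|If_-\|_{L^2})^2$ for $B$ and radius $\lesa \|If_+\|_{L^2}+\|If_-\|_{L^2}$ for $A$, yielding the claimed estimates. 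Uniqueness and the contraction property follow from essentially the same bilinear bounds applied to differences.

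The main obstacle is the frequency-by-frequency bookkeeping that lets one commute the $I^2$ with the product $\psi_+\overline{\psi}_-$ and the $I$ with $\phi\psi_\mp$ while staying inside the region where Theorem \ref{thm main X^sb esimate} applies. Concretely, this is where the constraint $r\les \tfrac12+2s$ appears: it is precisely what one needs so that the shifted triple $(s_1,s_2,s_3)=(0,0,r-2s)$ still lies in the sharp region of Theorem \ref{thm main X^sb esimate} once the mass-term and null-structure arguments have been applied, and where the restriction $s>-\tfrac16$ enters through the condition $s+(r-2s)>\tfrac12-b_1-b_2-b_3$ after optimizing in $\epsilon$.
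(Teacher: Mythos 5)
Your skeleton---Duhamel plus the energy estimates of Lemmas \ref{lem energy est for Xsb} and \ref{lem energy est for Hrb}, reducing the contraction to two bilinear bounds, then a bootstrap under the assumed smallness---is the right one and matches the paper. But the claimed form of the two bilinear bounds is where the proposal breaks down: you assert a prefactor of the form $\Delta T^{\delta}$ alone, whereas the correct estimates (these are exactly (\ref{appendix mod lwp - dirac est}) and (\ref{appendix mod lwp - wave est}) in the paper) have prefactor $\Delta T^{\alpha}+N^{-\beta}$. A pure $\Delta T^{\delta}$ gain is in fact impossible in the high-high interaction, which is the crux of the lemma. Concretely, if $\widehat{\psi}_+,\widehat{\psi}_-$ are both supported at frequency $\approx N$ and the output frequency is $\approx 1$, then $I^2(\psi_+\overline\psi_-)\approx\psi_+\overline\psi_-$ while $\|I\psi_\pm\|_{L^2}\approx N^{s}\|\psi_\pm\|_{L^2}$, so the $I$ operators cost a factor $N^{-2s}\gg 1$. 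This loss is recovered not by time localization but by the smallness of the high-frequency inputs in the bilinear estimate (Corollary \ref{cor wave-sobolev and X^sb estimate} applied with shifted $s_j$, giving $N^{-\frac12+2\epsilon}$); the $\Delta T$ factor comes only from the low-low piece, where $I$ acts trivially. The hypotheses (\ref{lem mod lwp - required est for wave data})--(\ref{lem mod lwp - required est for spinor data}) are phrased with the sum $\Delta T^{\alpha}+N^{-\beta}$ for precisely this reason.

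Your pointwise multiplier manipulation $\rho(\xi)\lesa\rho(\xi_1)\rho(\xi_2)\langle\xi\rangle^{-s}(\langle\xi_1\rangle\langle\xi_2\rangle)^{s}$ is false in exactly the ``bad'' regime you flag: take $|\xi|\approx 1$, $|\xi_1|\approx|\xi_2|\approx N$. Then the left side is $\approx 1$ while the right side is $\approx N^{2s}\ll 1$, so the inequality fails by $N^{-2s}$ (and by more if $|\xi_1|,|\xi_2|\gg N$). There is no ``harmless factor'' that repairs this; one genuinely has to split into $u_{low}+u_{hi}$, $v_{low}+v_{hi}$ (and in one case, into $\pm$ wave cones), and apply Theorem \ref{thm main X^sb esimate} or Corollary \ref{cor wave-sobolev and X^sb estimate} with \emph{different} exponent choices in each interaction ($(s_1,s_2,s_3)$ and $(b_1,b_2,b_3)$ as in the paper's Cases 1--4), not a single shift such as $(0,0,r-2s)$. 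Finally, your closing inequalities $A\lesa A_0+\Delta T^{\delta}AB$ and $B\lesa B_0+\Delta T^{\delta}A^2$ would have to be replaced by versions with $\Delta T^{\alpha}+N^{-\beta}$ in the coefficient, which is then what (\ref{lem mod lwp - required est for wave data})--(\ref{lem mod lwp - required est for spinor data}) are designed to control.
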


\begin{remark}
  Note that since $\| I^2\phi[0]\|_{H^{r-2s}} \lesa N^{-2s}$, by choosing $N$ sufficiently large and $\Delta T$ sufficiently small, we can ensure that the inequality (\ref{lem mod lwp - required est for wave data}) is satisfied. A similar comment applies to (\ref{lem mod lwp - required est for spinor data}).
\end{remark}

\begin{remark}
The reason that we can extend the work of Tesfahun \cite{Tesfahun2009} is due to the conclusions in Lemma \ref{lem smoothing} and Lemma \ref{lem mod lwp}. In more detail, Lemma \ref{lem smoothing} improves \cite[Lemma 8]{Tesfahun2009} by adding a power of $\Delta T$ on the right hand side of (\ref{lem smoothing - main eqn}). Since $\Delta T$ will be taken small, this is a significant gain. Similarly, Lemma \ref{lem mod lwp} extends \cite[Theorem 8]{Tesfahun2009} by having a larger exponent on $\Delta T$ in (\ref{lem mod lwp - required est for wave data}). As a consequence, we can take $\Delta T$ larger, which improves the eventual GWP result. The point here is that the larger $\Delta T$ becomes, the fewer time steps of length $\Delta T$ are required to reach a large time $T$.
\end{remark}

We now follow the argument used in \cite{Tesfahun2009} and sketch the proof of Theorem \ref{thm DKG in intro}. The persistence of regularity result in \cite{Selberg2008} shows that it suffices to prove GWP in the case
     \begin{equation}\label{thm main X^sb esimate - proof - conditions on s, r}
        -\frac{1}{6}<s<0, \qquad s-\frac{1}{4} + \sqrt{\Big(s-\frac{1}{4}\Big)^2 - s}<r < \frac{1}{2}  + 2s. \end{equation}
Note that this region is non-empty as the intersection of the curves $s-\frac{1}{4} + \sqrt{\Big(s-\frac{1}{4}\Big)^2 - s}$ and $ \frac{1}{2} + 2s$ occurs at $s=-\frac{1}{6}$.

Choose some large time $T>0$ and assume $\epsilon>0$ is small. Let $N$ be some large fixed constant to be chosen later depending on the initial data $\|\psi(0)\|_{H^s}$ and $\|\phi[0]\|_{H^r}$, as well as the various constants appearing in Lemma \ref{lem smoothing} and Lemma \ref{lem mod lwp}. Take $\Delta T  = N^{ \frac{4 s - 2\epsilon }{ 1  + 2r-4s - 6\epsilon}}$. If $N$ is sufficiently large then from (\ref{Imethod implies control of H^s norm})
        \begin{align*}
            \Big( \Delta T^{\frac{1}{2} + r-2s - 3 \epsilon} + N^{-r+2s + 2\epsilon}\Big) \| I^2\phi[0] \|_{H^{r-2s}} &\ll 1\\
            \Big( \Delta T^{1 - \epsilon} + N^{ - \frac{1}{2} + 2 \epsilon}\Big) \Big(  \| If_+ \|_{L^2} + \| If_-\|_{L^2}\Big)^2 &\ll 1. \end{align*}
Therefore by Lemma \ref{lem mod lwp} we get a solution $(\psi, \phi)$ to (\ref{DKG equation}) on $[0, \Delta T]$. We would now like to repeat this argument $\frac{T}{\Delta T}$ times to advance to the time $T$. The only obstruction is the possible growth of the norms $\| I \psi_\pm(t) \|_{L^2}$ and $\| I^2\phi[t]\|_{H^{r-2s}}$. Our aim is to use Lemma (\ref{lem smoothing}) to show that $\| I \psi_\pm(t) \|_{L^2}$ is ``almost conserved'' and consequently obtain large time control over the norm $\| I \psi_\pm (t) \|_{L^2}$. This is accomplished by using an induction argument as follows.

Assume $n\lesa \frac{T}{\Delta T}$ and  suppose we have a solution $(\psi, \phi)$ on $[0, n\Delta T]$ with the bounds
            \begin{equation}\label{proof of gwp - induc assump for psi}
                \sup_{t \in [0, n \Delta T]} \Big( \|   I \psi_+(t)  \|_{L^2_x}^2 + \| I \psi_- (t) \|_{L^2_x}^2\Big)
                                \les 2 \| If_+ \|^2_{L^2_x} + 2\| If_-\|^2_{L^2_x} \end{equation}
and
            \begin{equation}\label{proof of gwp - induc assump for phi}
              \sup_{t \in [0, n \Delta T]} \|   I^2 \phi[t]  \|_{H^{r-2s}_x}
                                \les C^* \Big( \| I^2 \phi[0] \|_{H^{r-2s}_x} +   \big( \| If_+ \|_{L^2_x} + \| If_-\|_{L^2_x}\big)^2 \Big)
            \end{equation}
where the constant $C^*$ is some large constant independent of $N$, $\Delta T$, and $n$. If $N$ is sufficiently large, depending on $C^*$ and the initial data $\| f_\pm \|_{H^s}$, $\| \phi[0]\|_{H^r}$, then we can apply Lemma \ref{lem mod lwp} with initial data $\psi(n\Delta T)$, $\big(\phi(n\Delta T), \p_t \phi(n\Delta T) \big)$, and extend the solution to $[0, (n+1)\Delta T]$. Suppose we could show that the bounds (\ref{proof of gwp - induc assump for psi}) and (\ref{proof of gwp - induc assump for phi}) on $[0, n\Delta T]$ implied that they also hold on the larger interval $[0, (n+1) \Delta T]$ with the same constant $C^*$. Then by induction we would have (\ref{proof of gwp - induc assump for psi}) and (\ref{proof of gwp - induc assump for phi}) on $[0, T]$. Since $T$ was arbitrary, Theorem \ref{thm DKG in intro} would follow. Thus it suffices to verify the estimates (\ref{proof of gwp - induc assump for psi}) and (\ref{proof of gwp - induc assump for phi}) on the interval $[0, (n+1) \Delta T]$. We break this into two parts, proving the bound on $\| I \psi_\pm (t ) \|_{L^2}$,  and then estimating $\| I^2 \phi[t] \|_{H^{r-2s}}$. \\

\textbf{Bound on the Spinor $\psi_\pm$.} Let
        $$ \Gamma(z) = \sup_{t \in [0, z]} \Big( \|   I \psi_+(t)  \|_{L^2_x}^2 + \| I \psi_- (t) \|^2_{L^2_x}\Big) .$$
Note that the bounds (\ref{proof of gwp - induc assump for psi}) and (\ref{proof of gwp - induc assump for phi}) imply that
\begin{equation}\label{proof of gwp - rough bounds} \begin{split}
     \Gamma(n \Delta T)
                                &\les A N^{-2s} \\
     \sup_{t \in [0, n \Delta T]} \|   I^2 \phi[t]  \|_{H^{r-2s}_x}
                                &\les B N^{-2s}
     \end{split}
\end{equation}
where $A$ and $B$  depend on the initial data, the constant $C^*$, and $T$, but are independent of $n$, $N$, and $\Delta T$. If we now combine Lemma \ref{lem smoothing},  Lemma \ref{lem mod lwp} together with (\ref{growth of Ipsi}) we obtain the following control on the growth of $\Gamma(t)$.

\begin{corollary}[Almost conservation law]\label{cor almost conservation law}
  Let $\frac{-1}{6} < s <0$ and $ -s< r \les \frac{1}{2} + 2s$ and $b=\frac{1}{2} + \epsilon$ with $\epsilon>0$ sufficiently small. Suppose
            $$ \Delta T = N^{\frac{4 s - 2 \epsilon}{ 1  + 2r-4s - 6\epsilon} }$$
  and we have the bounds (\ref{proof of gwp - rough bounds}). Then provided $N$ is sufficiently large,
        $$ \Gamma(\Delta T) \les \Gamma(0) + C \Delta T^{\frac{1}{2} - 2\epsilon} N^{ - r + 2 \epsilon} \big( A +B\big) \Gamma(0).$$

  \begin{proof} By Lemma \ref{lem smoothing}, Lemma \ref{lem mod lwp}, and (\ref{growth of Ipsi}) it suffices to show that
                    $$ \Delta T^{\frac{1}{2} + r-2s - 3 \epsilon } N^{-2s} B + N^{-r + 2\epsilon} B \ll 1$$
    and
                    $$ \Delta T^{1-\epsilon} N^{-2s} A + N^{2\epsilon- \frac{1}{2} - 2s} B \ll 1.$$
    However these inequalities follow provided $\Delta T = N^{\frac{4 s - 2 \epsilon}{ 1  + 2r-4s - 6\epsilon} }$ and we choose $N$ sufficiently large.
  \end{proof}
\end{corollary}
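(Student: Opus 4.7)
The strategy is to combine Lemma \ref{lem mod lwp}, Lemma \ref{lem smoothing}, and the almost-conservation identity (\ref{growth of Ipsi}) in sequence. First I would verify that the hypotheses (\ref{lem mod lwp - required est for wave data}) and (\ref{lem mod lwp - required est for spinor data}) of Lemma \ref{lem mod lwp} are met at time zero. Using the a priori bounds $\Gamma(0) \les A N^{-2s}$ and $\| I^2\phi[0] \|_{H^{r-2s}} \les B N^{-2s}$ provided by (\ref{proof of gwp - rough bounds}), these smallness conditions reduce to
    $$ \Big( \Delta T^{\frac{1}{2} + r-2s - 3\epsilon} N^{-2s} + N^{-r + 2\epsilon} \Big) B \ll 1 \quad \text{and} \quad \Big( \Delta T^{1-\epsilon} N^{-2s} + N^{-\frac{1}{2} + 2\epsilon - 2s} \Big) A \ll 1.$$

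To verify these for the choice $\Delta T = N^{(4s - 2\epsilon)/(1 + 2r - 4s - 6\epsilon)}$, I would directly compute the combined $N$-exponent of each of the four monomials. A short calculation yields
    $$ \frac{(4s - 2\epsilon)(\tfrac{1}{2} + r - 2s - 3\epsilon)}{1 + 2r - 4s - 6\epsilon} - 2s \;=\; \frac{\epsilon(4s - 1 - 2r + 6\epsilon)}{1 + 2r - 4s - 6\epsilon}$$
and analogously
    $$ \frac{(4s - 2\epsilon)(1 - \epsilon)}{1 + 2r - 4s - 6\epsilon} - 2s \;=\; \frac{2s(1 - 2r + 4s) + 2\epsilon(4s - 1 + \epsilon)}{1 + 2r - 4s - 6\epsilon}.$$
The first is strictly negative since $s < 0$ and $r > 0$, and the second is strictly negative using $1 - 2r + 4s \g 0$ (which follows from $r \les \tfrac{1}{2} + 2s$) together with $s < 0$ and $\epsilon$ small. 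The remaining two exponents $-r + 2\epsilon$ and $-\tfrac{1}{2} + 2\epsilon - 2s$ are negative thanks to $r > -s > 0$ and $s > -\tfrac{1}{6} > -\tfrac{1}{4}$, once $\epsilon$ is small enough. Hence taking $N$ sufficiently large makes both smallness conditions hold, so Lemma \ref{lem mod lwp} produces a local solution on $[0, \Delta T]$ with
    $$ \| I \psi_+ \|_{X^{0, b}_+(S_{\Delta T})} + \| I \psi_- \|_{X^{0, b}_-(S_{\Delta T})} \lesa \Gamma(0)^{\frac{1}{2}}, \qquad \| I^2 \phi \|_{\mathcal{H}^{r - 2s, b}(S_{\Delta T})} \lesa (A + B) N^{-2s}.$$

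Finally, inserting these norm bounds into Lemma \ref{lem smoothing} controls the correction term on the right-hand side of (\ref{growth of Ipsi}) by
    $$ \Delta T^{\frac{1}{2} - 2\epsilon} N^{2s - r + 2\epsilon} \cdot (A + B) N^{-2s} \cdot \Gamma(0) \;=\; C\, \Delta T^{\frac{1}{2} - 2\epsilon} N^{-r + 2\epsilon} (A + B) \Gamma(0),$$
which is exactly the advertised increment for $\Gamma$. The main obstacle is purely bookkeeping in the second paragraph: one must recognise that the prescribed exponent $(4s - 2\epsilon)/(1 + 2r - 4s - 6\epsilon)$ in the definition of $\Delta T$ is chosen precisely so that all four $N$-exponents appearing in the smallness conditions become strictly negative powers of $N$ under the constraints on $s, r, \epsilon$. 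Once this is verified, the rest of the argument is a routine composition of the two preceding lemmas with the almost-conservation identity.
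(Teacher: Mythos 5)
Your proposal is correct and follows essentially the same route as the paper: verify the smallness hypotheses of Lemma \ref{lem mod lwp}, invoke it to get the $X^{0,b}_\pm$ and $\mathcal H^{r-2s,b}$ bounds, plug those into Lemma \ref{lem smoothing}, and feed the result into (\ref{growth of Ipsi}). You simply supply the exponent bookkeeping that the paper's proof leaves implicit, and your arithmetic checks out. In particular, letting $D = 1 + 2r - 4s - 6\epsilon$ and noting $\frac{1}{2}+r-2s-3\epsilon = D/2$, the first combined exponent is exactly $(4s-2\epsilon)/2 - 2s = -\epsilon$, which agrees with your $\epsilon(4s-1-2r+6\epsilon)/D = -\epsilon D/D$; the second exponent expands as you wrote, and negativity follows from $s<0$, $1-2r+4s\g 0$, and $\epsilon$ small; the remaining two exponents are handled by $r>-s>0$ and $s>-\frac16$. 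Two small stylistic remarks: the paper's version of the second smallness condition carries a $B$ in the $N^{2\epsilon-\frac12-2s}$ term, whereas tracing through (\ref{lem mod lwp - required est for spinor data}) and (\ref{proof of gwp - rough bounds}) one actually gets $A$ (as you have it) — harmless since both are fixed constants, but your version is the more careful one. Also, your remark that the first exponent is ``strictly negative since $s<0$ and $r>0$'' is slightly misleading phrasing: what you really need is that the denominator $D>0$ (guaranteed by $r>0$, $s<0$, $\epsilon$ small) so that the whole quantity equals $-\epsilon<0$; the sign of $s$ per se is not what makes this term negative.
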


We can now iterate the previous corollary to get control over $\Gamma(t)$ at time $(n+1)\Delta T$
        $$ \Gamma\big((n+1) \Delta T\big) \les \Gamma(0)  +  C n \Delta T^{\frac{1}{2} - 2\epsilon} N^{-r + 2\epsilon} (A + B) \Gamma(0).$$
Since the number of steps $n\lesa \frac{T}{\Delta T}$ we get
        $$ \Gamma\big((n+1) \Delta T\big)\les \Gamma(0)  +  C  T \Delta T^{-\frac{1}{2} - 2\epsilon} N^{-r + 2 \epsilon} (A + B) \Gamma(0).$$
We want to make the coefficient of the second term small. Thus we need to ensure that, using the requirement on $\Delta T$ in Corollary \ref{cor almost conservation law},
               \begin{equation}\label{almost conservation law deduction} C  T \Delta T^{-\frac{1}{2} - 2\epsilon} N^{-r + 2 \epsilon} (A + B) \approx N^{ \frac{ -(1+4\epsilon)(2s - \epsilon) }{1  +  2r - 4s - 6\epsilon}  -r + 2\epsilon} \ll 1. \end{equation}
By choosing $N$ large, and $\epsilon>0$ sufficiently small, we see that (\ref{almost conservation law deduction}) will follow provided $ -2s- r \big( 1 + 2r-4s\big) <0$.  Rearranging, we get the quadratic polynomial $2 r^2  +(1 - 4s) r + 2s >0$ and so we need
            $$  s - \frac{1}{4} + \sqrt{ \Big(s - \frac{1}{4}\Big)^2 - s} <r .$$
Therefore, provided we choose $N$ large enough, depending on $T$, $A$, and $B$, we get
            $$ \Gamma\big((n+1) \Delta T\big) \les 2\Gamma(0)$$
as required. \\

\textbf{Bound on $\phi$.} Recall that our goal was to show that, if the bounds (\ref{proof of gwp - induc assump for psi}) and (\ref{proof of gwp - induc assump for phi}) hold for $t \in [0, n \Delta T]$, then in fact they also held on the larger domain $[0, (n+1)\Delta T]$ (with the same constants). The bound for $\| I \psi_\pm\|_{L^2}$ was obtained above. Thus it remains to bound $\| I^2\phi[t] \|_{H^{r-2s}}$ on the interval $[0, (n+1)\Delta T]$. The argument that gives the required bound makes use of an idea due to Selberg in \cite{Selberg2007} on induction of free waves. The idea is to  break $\phi$ into a sum of homogeneous waves, together with an inhomogeneous term and then use an induction argument to estimate the contribution that each of these homogeneous waves makes to the size of $\| I^2\phi[t] \|_{H^{r-2s}}$. We note that this idea was also used in \cite{Tesfahun2009}. \\

We begin by observing that the induction assumptions (\ref{proof of gwp - induc assump for psi}) and (\ref{proof of gwp - induc assump for phi}) together with Lemma \ref{lem mod lwp} give for every $0 \les j \les n$
     \begin{equation}\label{bound on phi eqn1}    \|  I \psi_+ \|_{X^{0, b}_+(S_j)} + \|  I \psi_- \|_{X^{0, b}_-(S_j)} \les C_1 \Big(  \| I f_+ \|_{L^2_x} + \| If_-\|_{L^2_x}\Big) \end{equation}
where $S_j=[j \Delta T, (j+1)\Delta T]$ and the constant $C_1$ is independent of $C^*$, $j$, $n$,  $N$, and $\Delta T$. Suppose we could show that (\ref{bound on phi eqn1}) implies that
        \begin{equation}\label{bound on phi eqn2}
            \sup_{ t \in [n \Delta T, (n+1)\Delta T]} \| I^2 \phi[t] \|_{H^{r-2s}} \les C_2 \Big( \| I^2 \phi[0] \|_{H^{r-2s}_x} +  \Big( \| I f_+ \|_{L^2_x} + \| I f_- \|_{L^2_x}\Big)^2\Big).
        \end{equation}
Then by taking $C^*=C_2$ we see that the bound (\ref{proof of gwp - induc assump for phi}) holds for $t \in [0, (n+1) \Delta T]$. Thus by induction, together with the fact that the constants in (\ref{proof of gwp - induc assump for psi}) and (\ref{proof of gwp - induc assump for phi}) are independent of $n$,  we would obtain control over  the solution on $[0, T]$ and Theorem \ref{thm DKG in intro} would follow.

We now show that (\ref{bound on phi eqn1}) implies (\ref{bound on phi eqn2}). We make use of the following result which is a variant of a corresponding result in \cite{Tesfahun2009}.

\begin{lemma}\label{lem control of inhomogeneous term}
Let $m \in \RR$, $0<\Delta T<1$,  $\frac{-1}{4}<s<0$, $0<r<\frac{1}{2}  + 2s$, and $b>\frac{1}{2}$. Assume $u \in X^{s, b}_+(S_{\Delta T})$ and $v \in X^{s, b}_-(S_{\Delta T})$.  Then there exists a unique solution $\Phi \in H^{r, b}(S_{\Delta T})$ to
            \begin{align*}
                \Box \Phi &= \Re( u v) + m^2 \Phi \\
                      \Phi(0)&= \p_t \Phi(0) = 0.
            \end{align*}
on $S_{\Delta T}=[0, \Delta T]\times \RR$. Moreover we have
       \begin{equation} \label{lem control of inhomogeneous term - estimate} \sup_{ t \in [0, \Delta T]} \| I^2 \Phi[t] \|_{H^{r-2s}_x} \lesa \big( \Delta T + N^{ -\frac{1}{2} + 2\epsilon}\big) \| I u\|_{X^{0, b}_+(S_{\Delta T})} \| Iv \|_{X^{0, b}_-(S_{\Delta T})}. \end{equation}
\begin{proof}
The existence/uniqueness claim follows from Lemma \ref{lem energy est for Hrb} together with an application of Theorem \ref{thm main X^sb esimate}. To prove (\ref{lem control of inhomogeneous term - estimate}) we write $\Phi = \Phi_1 + \Phi_2$ where
        \begin{align*}
                \Box \Phi_1 &= \Re( u_{low} v_{low}) + m^2 \Phi_1\\
                      \Phi_1(0)&= 0, \qquad \p_t \Phi_1(0) = 0.
            \end{align*}
 and $\widehat{u_{low}} = \ind_{|\xi|< \frac{N}{2}} \widehat{u}$, $\widehat{v_{low}} = \ind_{|\xi|< \frac{N}{2}} \widehat{v}$. The standard representation of solutions to the Klein-Gordon equation, together with the Sobolev product law and the observation that $I^2(u_{low} v_{low}) = u_{low} v_{low}$, gives
    \begin{align*}
        \sup_{t \in [0, \Delta T]} \| I^2 \Phi_1[t] \|_{H^{r-2s}_x} &\lesa \int_0^{\Delta T} \| u_{low}(t)  v_{low}(t)  \|_{H^{r-2s -1}_x} dt\\
                       &\lesa \int_0^{\Delta T} \| u_{low}(t)\|_{L^2_x}  \|v_{low}(t)  \|_{L^2_x} dt \\
                       &\lesa \Delta T \| I u \|_{X^{0, b}_+(S_{\Delta T}) } \| I v \|_{X^{0, b}_-(S_{\Delta T})}.
    \end{align*}
 To bound the remaining term, $\Phi_2$, we note that by the energy estimate for $H^{s, b}$ spaces in Lemma \ref{lem energy est for Hrb},
        \begin{align}
               \sup_{t \in [0, \Delta T]} \| I^2 \Phi_2[t] \|_{H^{r-2s}_x} &\lesa \| I^2 \Phi_2 \|_{\mathcal{H}^{r - 2s, b}(S_{\Delta T})} \notag\\
                                    &\lesa \| I^2( u v - u_{low} v_{low}) \|_{H^{r - 2s -1, b-1}(S_{\Delta T})} \notag\\
                                    &\lesa \| u_{low} v_{hi} \|_{H^{-\frac{1}{2}, b-1}(S_{\Delta T})} + \| u_{hi} v_{low} \|_{H^{-\frac{1}{2}, b-1}(S_{\Delta T})} + \| u_{hi} v_{hi} \|_{H^{-\frac{1}{2}, b-1}(S_{\Delta T})} \label{lem control of inhomogeneous term - decomp into terms}
      \end{align}
 where $u_{hi} = u - u_{low}$ is the high frequency component of $u$, $v_{hi}$ is defined similarly, and we used the assumption $r < \frac{1}{2}  + 2s$. By Corollary \ref{cor wave-sobolev and X^sb estimate} we have the estimate
        \begin{equation}\label{lem control of inhomogeneous term - application of corollary}
            \|\psi_1 \psi_2 \|_{H^{ - \frac{1}{2}, b-1}} \lesa \| \psi_1 \|_{X^{-\frac{1}{2}- s_1 + 2\epsilon, b}_+} \| \psi_2 \|_{X^{s_1, b}_-}
        \end{equation}
 for $\frac{-1}{2} < s_1\les 0$. To control the first term in (\ref{lem control of inhomogeneous term - decomp into terms}) we use (\ref{lem control of inhomogeneous term - application of corollary}) with $s_1 = -\frac{1}{2} + 2\epsilon$ together with (\ref{Imethod can trade regularity for powers of N}) to obtain
 \begin{align*}
        \| u_{low} v_{hi} \|_{H^{-\frac{1}{2}, b-1}(S_{\Delta T})} &\lesa \| u_{low} \|_{X^{0, b}_+(S_{\Delta T})} \| v_{hi} \|_{X^{-\frac{1}{2} + 2 \epsilon, b}_-(S_{\Delta T})}\\
        &\lesa N^{-\frac{1}{2}  + 2\epsilon}  \| I u \|_{X^{0, b}_+(S_{\Delta T})} \| Iv \|_{X^{0, b}_-(S_{\Delta T})}
 \end{align*}
A similar application of (\ref{lem control of inhomogeneous term - application of corollary}) allows us to estimate the second term in (\ref{lem control of inhomogeneous term - decomp into terms}). Finally, for the last term in (\ref{lem control of inhomogeneous term - decomp into terms}) we use (\ref{Imethod can trade regularity for powers of N}) and (\ref{lem control of inhomogeneous term - application of corollary}) with $s_1= s$ to deduce that
    \begin{align*}
        \| u_{hi} v_{hi} \|_{H^{-\frac{1}{2}, b}(S_{\Delta T})} &\lesa \| u_{hi} \|_{X^{-\frac{1}{2} -s + 2\epsilon, b}_+(S_{\Delta T})} \| v_{hi} \|_{X^{s, b}_-(S_{\Delta T})}\\
        &\lesa N^{-\frac{1}{2}  + 2\epsilon}  \| I u \|_{X^{0, b}_+(S_{\Delta T})} \| Iv \|_{X^{0, b}_-(S_{\Delta T})}
 \end{align*}
 where we needed $-\frac{1}{2} - s + 2\epsilon\les s$ which holds provided $s> - \frac{1}{4}$ and $\epsilon$ sufficiently small.

\end{proof}
\end{lemma}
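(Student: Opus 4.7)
The plan is to first dispose of the existence and uniqueness claim by invoking Lemma \ref{lem energy est for Hrb} together with the product estimate of Theorem \ref{thm main X^sb esimate} (which verifies that $\Re(uv) \in H^{r-1, b-1}(S_{\Delta T})$ under the stated range of $s, r$), followed by a standard contraction argument in $\mathcal{H}^{r, b}(S_{\Delta T})$. The quantitative bound (\ref{lem control of inhomogeneous term - estimate}) is the real work, and for this I would split $u = u_{low} + u_{hi}$, $v = v_{low} + v_{hi}$ with cutoff at frequency $N/2$, and correspondingly decompose $\Phi = \Phi_1 + \Phi_2$ so that $\Phi_1$ is driven by $\Re(u_{low} v_{low})$ alone, while $\Phi_2$ is driven by the three remaining cross terms. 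The motivation is that these two pieces produce the two summands on the right hand side of (\ref{lem control of inhomogeneous term - estimate}) by distinct mechanisms: the low--low piece gains a factor of $\Delta T$ from direct time integration, while every other interaction carries at least one high-frequency factor from which we can purchase the $N^{-1/2 + 2\epsilon}$ gain via (\ref{Imethod can trade regularity for powers of N}).

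For $\Phi_1$, the key observation is that $u_{low} v_{low}$ is supported in frequencies $\lesa N$, so $I^2$ acts as the identity. The Duhamel representation of the Klein--Gordon equation with zero initial data then yields
$$ \sup_{t \in [0, \Delta T]} \| I^2 \Phi_1[t] \|_{H^{r-2s}_x} \lesa \int_0^{\Delta T} \| (u_{low} v_{low})(t) \|_{H^{r-2s-1}_x} \, dt, $$
and since the hypothesis $r < \tfrac{1}{2} + 2s$ forces $r - 2s - 1 < -\tfrac{1}{2}$, the one-dimensional Sobolev product law $L^2 \cdot L^2 \hookrightarrow H^{-1/2 - \delta}$ collapses the integrand into $\|u_{low}(t)\|_{L^2} \|v_{low}(t)\|_{L^2}$. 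Combined with the embedding $X^{0, b}_\pm(S_{\Delta T}) \hookrightarrow C_t L^2_x$ (valid for $b > 1/2$) and the fact that $I$ is the identity on low frequencies, this produces the clean linear contribution $\Delta T \cdot \|Iu\|_{X^{0,b}_+(S_{\Delta T})} \|Iv\|_{X^{0,b}_-(S_{\Delta T})}$.

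For $\Phi_2$, whose source is $\Re(u_{low} v_{hi} + u_{hi} v_{low} + u_{hi} v_{hi})$, I would apply Lemma \ref{lem energy est for Hrb} to reduce matters to bounding $\|I^2(\text{source})\|_{H^{r - 2s - 1, b-1}(S_{\Delta T})}$, and since $r - 2s - 1 < -\tfrac{1}{2}$ this is controlled by the larger $H^{-1/2, b-1}$ norm. Corollary \ref{cor wave-sobolev and X^sb estimate} supplies the one-parameter family of bilinear estimates $\|\psi_1 \psi_2\|_{H^{-1/2, b-1}} \lesa \|\psi_1\|_{X^{-1/2 - s_1 + 2\epsilon, b}_+} \|\psi_2\|_{X^{s_1, b}_-}$ valid for $s_1 \in (-\tfrac{1}{2}, 0]$. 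For the two mixed terms I would take $s_1 = -\tfrac{1}{2} + 2\epsilon$, so that the high-frequency factor sits at Sobolev regularity $-\tfrac{1}{2} + 2\epsilon$, from which (\ref{Imethod can trade regularity for powers of N}) immediately cashes in $N^{-1/2 + 2\epsilon}$ against the high-frequency factor's $Iu$ or $Iv$ norm.

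The main obstacle, and the point at which the standing assumption $s > -\tfrac{1}{4}$ is consumed, is the symmetric high--high contribution $u_{hi} v_{hi}$. Here both factors are high-frequency and we must split the available negative regularity between them by choosing $s_1 = s$ in the corollary; admissibility of this choice forces $-\tfrac{1}{2} - s + 2\epsilon \les s$, equivalently $s > -\tfrac{1}{4} + \epsilon$. Once this is secured, trading $u_{hi}$ from $X^{-1/2 - s + 2\epsilon, b}_+$ down to $\|Iu\|_{X^{0,b}_+}$ via (\ref{Imethod can trade regularity for powers of N}) yields a factor $N^{-1/2 - s + 2\epsilon}$, while trading $v_{hi}$ from $X^{s, b}_-$ down to $\|Iv\|_{X^{0,b}_-}$ yields $N^{s}$, and the product is exactly $N^{-1/2 + 2\epsilon}$ as required. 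Summing the contributions of $\Phi_1$ and $\Phi_2$ then gives (\ref{lem control of inhomogeneous term - estimate}).
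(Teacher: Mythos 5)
Your proposal is correct and follows essentially the same route as the paper: the same low/high splitting of $u$, $v$ and of $\Phi$, the same Duhamel-plus-Sobolev-product treatment of the low--low piece yielding the $\Delta T$ factor, the same reduction of the $\Phi_2$ bound to $H^{-1/2,b-1}$ followed by Corollary \ref{cor wave-sobolev and X^sb estimate} with $s_1=-\tfrac12+2\epsilon$ on the mixed terms and $s_1=s$ on the high--high term, and the same use of $s>-\tfrac14$ to make that last choice admissible. The explicit factorisation $N^{-1/2-s+2\epsilon}\cdot N^s$ in the high--high case is just a transparent unpacking of the paper's single application of (\ref{Imethod can trade regularity for powers of N}).
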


 We now have the necessary results to control the growth of $\| I^2\phi[t] \|_{H^{r-2s}}$. Let $0\les j\les n$ and define $\phi_j^{(0)}$ to be the solution to
        \begin{equation}
            \begin{split}
                \Box \phi_j^{(0)} &=m^2 \phi_j^{(0)}\\
                \phi_j^{(0)}(j\Delta T) &= \phi(j\Delta T), \qquad \p_t \phi_j^{(0)}(j\Delta T) = \p_t \phi(j\Delta T).
            \end{split}
        \end{equation}
 Let $\Phi_j = \phi - \phi_j^{(0)}$ be the inhomogeneous component of $\phi$.  The inequality (\ref{bound on phi eqn1}) together with Lemma \ref{lem control of inhomogeneous term} and the assumption $\Delta T = N^{\frac{4 s - 2 \epsilon}{1 + 2r - 4s - 6 \epsilon} } $, shows that for every $0\les j \les n$
         \begin{equation} \label{control of inhomogeneous term} \sup_{ t \in [j\Delta T, (j+1) \Delta T]} \| I^2 \Phi_j[t] \|_{H^{r-2s}_x} \lesa \Delta T \big(  \| I f_+ \|_{L^2_x} + \| If_-\|_{L^2_x} \big)^2. \end{equation}

We now claim that for $1\les j \les n$ we have the estimate
    \begin{equation}\label{control of KG component - estimate for free waves}
            \sup_{t\in [0, (n+1) \Delta T]} \| I^2 \phi^{(0)}_j[t] \|_{H_x^{r-2s}} \les \sup_{ t \in [0, (n+1) \Delta T]} \| I^2\phi^{(0)}_{j-1}[t] \|_{H^{r-2s}_x} +  C \Delta T\big( \| If_+ \|_{L_x^2} +  \| I f_- \|_{L^2_x}\big)^2.
\end{equation}
Assume for the moment that (\ref{control of KG component - estimate for free waves}) holds. Then after $n$ applications of (\ref{control of KG component - estimate for free waves}), together with the standard energy inequality for the homogeneous wave equation, we obtain
            \begin{align}
            \sup_{t\in [0, (n+1) \Delta T]} \| I^2 \phi^{(0)}_n[t] \|_{H^{r-2s}_x} &\les \sup_{t \in [0, (n+1) \Delta T]}  \| I^2 \phi^{(0)}_0[t] \|_{H^{r-2s}_x} + C n \Delta T \big( \| If_+ \|_{L^2_x} + \| I f_- \|_{L^2_x} \big)^2\notag \\
            &\lesa \| I^2\phi[0] \|_{H_x^{r-2s}} + C n \Delta T \big( \| If_+ \|_{L^2_x} + \| I f_- \|_{L^2_x} \big)^2.\label{control of KG component - estimate for free waves2}
            \end{align}
If we now combine (\ref{control of inhomogeneous term}) and (\ref{control of KG component - estimate for free waves2})  we see that since $n \lesa \frac{T}{\Delta T}$
  \begin{align*}
        \sup_{ t \in [n\Delta T, (n+1) \Delta T]} \| I^2\phi[t] \|_{H^{r-2s}_x} &\les \sup_{ t \in [n\Delta T, (n+1) \Delta T]} \| I^2\phi^{(0)}_{n}[t] \|_{H^{r-2s}_x} + \sup_{ t \in [n\Delta T, (n+1) \Delta T]} \| I^2\Phi_n[t] \|_{H^{r-2s}_x} \\
                            &\lesa \| I^2 \phi[0] \|_{H^{r-2s}_x} + (n+1) \Delta T \Big( \| I f_+ \|_{L^2_x} + \| I f_- \|_{L^2_x}\Big)^2 \\
                            &\lesa \| I^2 \phi[0] \|_{H^{r-2s}_x} +  \Big( \| I f_+ \|_{L^2_x} + \| I f_- \|_{L^2_x}\Big)^2
  \end{align*}
where the implied constant is independent of $N$, $C^*$, and $\Delta T$. Thus we obtain (\ref{bound on phi eqn2}) as required.

It only remains to prove (\ref{control of KG component - estimate for free waves}). We begin by observing that
        $$\big(\phi^{(0)}_{j} - \phi^{(0)}_{j-1} \big) (j \Delta T) = \phi( j\Delta T) - \phi^{(0)}_{j-1}( j\Delta T) = \Phi_{j-1}( j \Delta T).$$
Hence the difference $\phi^{(0)}_{j} - \phi^{(0)}_{j-1}$ satisfies the equation
        \begin{align*}
            \Box( \phi^{(0)}_{j} - \phi^{(0)}_{j-1} ) &= m^2( \phi^{(0)}_{j} - \phi^{(0)}_{j-1}) \\
                \big(\phi^{(0)}_{j} - \phi^{(0)}_{j-1}\big)(j\Delta T) &= \Phi_{j-1}(j\Delta T), \\
                 \p_t \big(\phi^{(0)}_{j} - \phi^{(0)}_{j-1}\big)(j\Delta T) &= \p_t \Phi_{j-1}(j\Delta T).
        \end{align*}
Therefore
        \begin{align*}
            \sup_{ t \in [0, (n+1)\Delta T]} \| I^2\phi^{(0)}_j[t] \|_{H^{r-2s}_x} &\les \sup_{ t\in [0, (n+1) \Delta T]} \| I^2 \phi^{(0)}_{j-1}[t] \|_{H^{r-2s}_x } +  \sup_{ t\in [0, (n+1) \Delta T]} \| I^2 \big(\phi^{(0)}_{j} - \phi^{(0)}_{j-1}\big)[t] \|_{H^{r-2s}_x } \\
            &\les \sup_{ t\in [0, (n+1) \Delta T]} \| I^2 \phi^{(0)}_{j-1}[t] \|_{H^{r-2s}_x } +  C \| \Phi_{j-1}[j\Delta T] \|_{H^{r-2s}_x}
        \end{align*}
and so (\ref{control of KG component - estimate for free waves}) follows from (\ref{control of inhomogeneous term}). Consequently, we deduce that the induction assumptions (\ref{proof of gwp - induc assump for psi}) and (\ref{proof of gwp - induc assump for phi}) hold on the larger interval $[0, (n+1) \Delta T]$ and hence Theorem \ref{thm DKG in intro} follows.

\subsection{Proof of Lemma \ref{lem smoothing}}\label{subsec - proof of lem smoothing}
Let $Q(f, g) = I(fg) - I^2 f I g$. Note that
        $$ \widehat{Q(f, g)}(\xi) = \int_\RR \big(\rho(\xi  ) - \rho(\xi - \eta)^2 \rho(\eta) \big) \widehat{f}(\xi - \eta) \widehat{g}(\eta) d\eta .$$
An application of Cauchy-Schwarz together with Lemma \ref{lem time dilation for Xsb} gives
        \begin{align*}
          \Big| \int_0^{t'} \int_\RR \big(I(\phi u) - I^2 \phi I u\big)\overline{I v} \,dx dt \Big| &\lesa \| \ind_{[0, t']} Q(\phi, u) \|_{X^{0, -\frac{1}{2} + \epsilon}_\mp} \| I v \|_{X^{0, \frac{1}{2} - \epsilon}_\mp([0, t']\times \RR)}\\
          &\lesa \|  Q(\phi, u) \|_{X^{0, -\frac{1}{2} + \epsilon}_\mp([0, t']\times \RR)} \| I v \|_{X^{0, \frac{1}{2} - \epsilon}_\mp([0, t']\times \RR)}\\
          &\lesa  \|Q(\phi, u) \|_{X^{0, -\frac{1}{2} + \epsilon}_\mp(S_{\Delta T})} \| I v \|_{X^{0, b}_\mp(S_{\Delta T})}.
        \end{align*}
Thus, by the definition of $X^{0, b}_\pm(S_{\Delta T})$, it suffices to prove that
        \begin{equation}\label{appendix lem smoothing - main eqn}
            \| Q(\phi, u) \|_{X^{0, -\frac{1}{2} +\epsilon}_\mp(S_{\Delta T})} \lesa \Delta T^{\frac{1}{2} - 2\epsilon} N^{2s - r + 2\epsilon} \|  I^2 \phi \|_{H^{r-2s, b}} \| I u \|_{X^{0, b}_\pm}.
        \end{equation}
where we may assume that $\phi$ and $u$ are supported in $[-\Delta T, 2 \Delta T]\times \RR$. Note that since the $I$ operator only acts on the spatial variable $x$, $I^2\phi$ and $I u$ are also supported in $[-\Delta T, 2 \Delta T]\times \RR$.
Write $\phi = \phi_{low} + \phi_{hi}$ and $u = u_{low} + u_{hi}$ where, as in the proof of Lemma \ref{lem control of inhomogeneous term}, we define $\widetilde{\phi}_{low} = \ind_{|\xi|\les \frac{N}{2}}\widetilde{\phi}$, and $u_{low}$ is defined similarly. We consider each of the possible interactions separately.  \\

\textbf{$\bullet$ Case 1 ( $low$-$low$).} In this case we simply note that $Q(\phi, u) =0$ and hence (\ref{appendix lem smoothing - main eqn}) holds trivially.\\

\textbf{$\bullet$ Case 2  ( $low$-$hi$).} We need to use the smoothing property of the bilinear form $Q(\phi, u)$ to transfer a derivative from $\phi_{low}$ to $u_{hi}$. More precisely, suppose $|\xi - \eta|<\frac{N}{2}$ and $|\eta|>\frac{N}{2}$. Then since $\rho'(z) \lesa N^{-s} |z|^{s-1}$ for $|z| \g \frac{N}{2}$ we have
    \begin{align*}
    |\rho(\xi) - \rho(\xi - \eta)^2 \rho(\eta)| &= |\rho(\xi) - \rho(\eta)| \\
                     &\lesa  N^{-s} |\eta|^{s-1} |\xi - \eta| \\
                     &\approx \rho(\eta) \frac{|\xi - \eta|}{|\eta|} \lesa \rho(\eta) \frac{ |\xi - \eta|^{r-2s}}{|\eta|^{r-2s}}
\end{align*}
 provided $r-2s < 1$. Hence
    $$|\widetilde{Q(\phi_{low}, u_{hi})}(\tau, \xi)|
    \lesa \int_{\RR^2} |\xi-\eta|^{r-2s}|\widetilde{\phi}_{low}(\tau - \lambda, \xi - \eta) |\eta|^{-r+2s} \rho(\eta) |\widetilde{u}_{hi}(\lambda, \eta)| d \lambda d \eta.   $$
 Thus we can move the derivative $|\nabla|^{r-2s}$ from $u_{hi}$ to $\phi_{low}$, where we let $\widehat{(|\nabla|^s f)}(\xi) = |\xi|^s \widehat{f}(\xi)$. This is the essential step which allows us to prove (\ref{appendix lem smoothing - main eqn}) in the $low$-$hi$ case. We now apply (\ref{Imethod can trade regularity for powers of N}) and Theorem \ref{thm main X^sb esimate} with $s_1 = s_2 = 0$,  $s_3 = 2\epsilon$, $b_1 = \frac{1}{2} -\epsilon$, $b_2=0$, and  $b_3 = b$ to obtain
 \begin{align*}
    \| Q(\phi_{low}, u_{hi} )\|_{X^{0, -\frac{1}{2} + \epsilon}_\mp(S_{\Delta T})} &\lesa \big\| |\nabla|^{r-2s} \phi_{low} |\nabla|^{-r+2s} Iu_{hi} \big\|_{X^{0, -\frac{1}{2} + \epsilon}_{\mp}}\notag\\
                    &\lesa \big\| |\nabla|^{r-2s} \phi_{low} \big\|_{L^2_{t, x} } \big\| |\nabla|^{-r + 2s } I u_{hi} \big\|_{X^{2\epsilon, b}_{\pm}}\notag\\
                    &\lesa \Delta T^{\frac{1}{2}} N^{-r +2s + 2\epsilon} \| I^2 \phi \|_{L^\infty_tH_x^{r-2s}} \| I u\|_{X^{0, b}_{\pm}} \\
                    &\lesa \Delta T^{\frac{1}{2}} N^{-r +2s + 2\epsilon} \| I^2 \phi \|_{H^{r-2s, b}} \| I u\|_{X^{0, b}_{\pm}}
    \end{align*}
 where  we used the assumption  $\supp \phi \subset \{ [-\Delta T, 2 \Delta T] \times \RR\}$. \\

\textbf{$\bullet$ Case 3 ( $hi$-$low$).} In this case we do not have to transfer any regularity and we simply use the estimate $ \rho(\xi) - \rho(\xi - \eta)^2 \rho(\eta) \lesa  1$. Then (\ref{Imethod can trade regularity for powers of N}) together with an identical application of Theorem \ref{thm main X^sb esimate} to the $low$-$hi$ case gives
   \begin{align*}  \| Q(\phi_{hi}, u_{low} )\|_{X^{0, -\frac{1}{2} + \epsilon}_\mp(S_{\Delta T})} &\lesa \big\| \phi_{hi}  u_{low} \big\|_{X^{0, -\frac{1}{2} + \epsilon}_{\mp}}\\
                    &\lesa \big\| \phi_{hi} \big\|_{L^2_{t, x} } \big\| u_{low} \big\|_{X^{2\epsilon, b}_{\pm}}\\
                    &\lesa \Delta T^{\frac{1}{2}} N^{2s -r  + 2\epsilon} \| I^2 \phi \|_{L^\infty_tH_x^{r-2s}} \| Iu \|_{X^{0, b}_{\pm}} \\
                    &\lesa \Delta T^{\frac{1}{2}}  N^{2s -r  + 2\epsilon} \| I^2 \phi \|_{H^{r-2s, b}} \| Iu \|_{X^{0, b}_{\pm}}
    \end{align*}
where as before, we used the assumption $\supp \phi \subset \{ [-\Delta T, 2 \Delta T] \times \RR\}$. \\

\textbf{$\bullet$ Case 4 ( $hi$-$hi$).} This is the most difficult case and we need to make full use of the generality of Theorem \ref{thm main X^sb esimate} to obtain the  term  $\Delta T^{\frac{1}{2} - \epsilon}$. We decompose $\phi_{hi} = \phi_{hi}^+ + \phi_{hi}^-$ where
    $$\widetilde{\phi}_{hi}^+ = \ind_{\{ \tau\xi <0\}} \widetilde{\phi}_{hi}$$
is the restriction of $\widetilde{\phi}_{hi}$ to the second and fourth quadrants of $\RR^{1+1}$. Note that $\| \phi^\pm \|_{X^{s, b}_\pm} \lesa \| \phi \|_{H^{s, b}}$. Assume that we have $\pm =+$, $\mp=-$ in (\ref{appendix lem smoothing - main eqn}), it will be clear that the proof will also apply to the $\pm=-$, $\mp =+$ case. \\

\textbf{$\bullet$ Case 4a ($hi$-$hi$ $+$).}
As in $hi$-$low$ case we start by discarding the smoothing multiplier $Q$. We now apply Theorem \ref{thm main X^sb esimate} with $s_1=-s + 2\epsilon$, $s_2 = s$, $s_3=0$, $b_1 = b_2 = \frac{1}{4}$, and $b_3 = \frac{1}{2} - \epsilon$ to obtain
    \begin{align*}
      \| Q( \phi_{hi}^+, u_{hi}) \|_{X^{0, -\frac{1}{2} + \epsilon}_-(S_{\Delta T})} &\lesa \| \phi^+_{hi} u_{hi} \|_{X^{0, -\frac{1}{2} + \epsilon}_-} \\
                                                    &\lesa \| \phi^+_{hi} \|_{X^{-s + 2\epsilon, \frac{1}{4}}_+} \| u_{hi} \|_{X^{s, \frac{1}{4}}_+} \\
                                                    &\lesa N^{2s-r + 2\epsilon} \| I^2 \phi \|_{H^{r-2s, \frac{1}{4} }} \| I u \|_{X^{0, \frac{1}{4}}_+}\\
                                                    &\lesa \Delta T^{\frac{1}{2} - \epsilon} N^{2s -r + 2\epsilon} \| I^2 \phi \|_{H^{r-2s, b}} \| I u \|_{X^{0, b}_+}
    \end{align*}
where we needed $-s<r$, $\epsilon>0$ sufficiently small, and in the final line we used the assumption that $\phi$, $u$, are compactly supported in the interval $[-\Delta T, 2\Delta T]$ together with Lemma \ref{lem time dilation for Xsb} and Lemma \ref{lem time dilation for Hrb}.\\

\textbf{$\bullet$ Case 4b  ( $hi$-$hi$ $-$).} Here we first apply Lemma \ref{lem time dilation for Xsb}, discard the multiplier $Q$, and then apply Theorem \ref{thm main X^sb esimate} with $s_1 =0$, $s_2=-s + \epsilon$, $s_3=s$, $b_1=b_2=\frac{1}{4}$, and $b_3=\frac{1}{2} + \epsilon$ to obtain
    \begin{align*}
      \| Q( \phi_{hi}^-, u_{hi}) \|_{X^{0, -\frac{1}{2} + \epsilon}_-(S_{\Delta T})} &\lesa \Delta T^{\frac{1}{4} - \epsilon}\| \phi^-_{hi} u_{hi} \|_{X^{0, -\frac{1}{4} }_-} \\
                                                    &\lesa \Delta T^{\frac{1}{4} - \epsilon}\| \phi^-_{hi} \|_{X^{-s + \epsilon, \frac{1}{4}}_-} \| u_{hi} \|_{X^{s, b}_+} \\
                                                    &\lesa \Delta T^{\frac{1}{4} - \epsilon}N^{2s-r + \epsilon} \| I^2 \phi \|_{H^{r-2s, \frac{1}{4} }} \| I u \|_{X^{0, b}_+}\\
                                                    &\lesa \Delta T^{\frac{1}{2} - 2\epsilon} N^{2s -r + \epsilon} \| I^2 \phi \|_{H^{r-2s, b}} \| I u \|_{X^{0, b}_+}
    \end{align*}
where, as previously, we used the assumption on the support of $\phi$ in the last line.

\subsection{Proof of Lemma \ref{lem mod lwp}}\label{subsec - proof of lem mod lwp}

Lemma \ref{lem mod lwp} follows by a standard fixed point argument using Lemma \ref{lem energy est for Xsb}, Lemma \ref{lem energy est for Hrb}, and the estimates
        \begin{equation}\label{appendix mod lwp - dirac est}
            \| I ( uv) \|_{X^{0, b-1}_{\pm}(S_{\Delta T})} \lesa \Big( \Delta T^{\frac{1}{2} + r - 2s - 3\epsilon} + N^{-r + 2 s + 2\epsilon}\Big) \| I^2 u \|_{H^{r-2s, b}(S_{\Delta T})} \| I v\|_{X^{0, b}_{\mp}(S_{\Delta T})} \end{equation}
and
        \begin{equation}\label{appendix mod lwp - wave est}
            \| I^2 ( uv) \|_{H^{r-2s -1, b-1}(S_{\Delta T})} \lesa \Big( \Delta T^{1 -\epsilon} + N^{-\frac{1}{2} + 2\epsilon}\Big) \| I u \|_{X_+^{0, b}(S_{\Delta T})} \| I v\|_{X^{0, b}_{-}(S_{\Delta T})}. \end{equation}
See for instance  \cite{Tesfahun2009}. \\

We start by proving (\ref{appendix mod lwp - dirac est}). As in the proof of Lemma \ref{lem smoothing}, we decompose $u= u_{low} + u_{hi}$ and $v=v_{low} + v_{hi}$. \\

\textbf{$\bullet$ Case 1 ( $low$-$low$).} We split $u_{low} = u_{low}^+ + u_{low}^{-}$ where we use the same notation as in Subsection \ref{subsec - proof of lem smoothing}, Case 4. Observe that an application of Theorem \ref{thm main X^sb esimate} gives
    \begin{equation} \label{lem mod lwp proof - low low case eqn 1} \int_{ \RR^{2}} \Pi_{j=1}^3 \psi_j dx dt \lesa \| \psi_1 \|_{X^{0, \epsilon}_\pm} \| \psi_2 \|_{X^{r-2s, \frac{1}{2} - r + 2s + \frac{\epsilon}{2} }_\pm} \| \psi_3 \|_{X^{0, \frac{1}{2}- \epsilon}_{\mp}}
    \end{equation}
provided that $0<r - 2s< \frac{1}{2}$ and $\epsilon>0$ is sufficiently small. Hence, using Lemma \ref{lem time dilation for Xsb} together with two applications of (\ref{lem mod lwp proof - low low case eqn 1}) we see that
    \begin{align*}
        \| I(u_{low} v_{low}) \|_{X^{0, b-1}_\pm(S_{\Delta T})} &\lesa \Delta T^{\frac{1}{2} - 2 \epsilon} \| u_{low}^\pm  v_{low} \|_{X^{0, - \epsilon}_\pm(S_{\Delta T})} + \| u_{low}^{\mp} v_{low} \|_{X^{0, b-1}_{\pm}(S_{\Delta T})} \\
                        &\lesa\Delta T^{\frac{1}{2} - 2 \epsilon} \| u_{low}^{\pm} \|_{X^{r-2s, \frac{1}{2} - r + 2s + \frac{\epsilon}{2}}_{\pm}(S_{\Delta T})} \| v_{low} \|_{X^{0, \frac{1}{2} + \epsilon}_{\mp}(S_{\Delta T})} \\
                        &\qquad \qquad \qquad + \| u_{low}^{\mp} \|_{X^{r-2s, \frac{1}{2} - r + 2s + \frac{\epsilon}{2}}_{\mp}(S_{\Delta T})} \| v_{low}  \|_{X^{0, \epsilon}_{\mp}(S_{\Delta T})}\\
                        &\lesa \Delta T^{\frac{1}{2} + r - 2s - 3\epsilon} \| I^2 u \|_{H^{r-2s, b}(S_{\Delta T})} \| I v \|_{X^{0, b}_\pm(S_{\Delta T})}.
    \end{align*}

\textbf{$\bullet$ Case 2 ( $low$-$hi$).} Note that Corollary \ref{cor wave-sobolev and X^sb estimate} implies that
        \begin{equation}\label{lem mod lwp proof - low hi case eqn3}
                    \| \psi \q \|_{X^{0, b-1}_{\pm}} \lesa \| \psi \|_{H^{s_1, b}} \| \psi \|_{X^{s_2, b}_{\mp}}
        \end{equation}
provided
    $$ s_1>0, \qquad s_2> - \frac{1}{2} + \epsilon, \qquad s_1 + s_2>\epsilon.$$
We now apply (\ref{lem mod lwp proof - low hi case eqn3}) with $s_1 = r-2s$, $s_2 = 2s - r + 2 \epsilon$ to get
        \begin{align*}
            \| I( u_{low} v_{hi} ) \|_{X^{0, b-1}_{\pm}(S_{\Delta T})} &\lesa \| u_{low} \|_{H^{r-2s, b}(S_{\Delta T})} \| v_{hi } \|_{X^{2s - r + 2\epsilon, b}_{\mp}(S_{\Delta T})} \\
            &\lesa N^{2s - r + 2\epsilon} \| I^2 u \|_{H^{r-2s, b}(S_{\Delta T})} \| I v \|_{X^{0, b}_{\mp}(S_{\Delta T})}.
        \end{align*}

\textbf{$\bullet$ Case 3 ( $hi$-$low$).} An application of (\ref{lem mod lwp proof - low hi case eqn3}) with $s_1 = 2\epsilon$, $s_2 = 0$ gives
    \begin{align*}
            \| I( u_{hi} v_{low} ) \|_{X^{0, b-1}_{\pm}(S_{\Delta T})} &\lesa \| u_{hi} \|_{H^{2\epsilon, b}(S_{\Delta T})} \| v_{low } \|_{X^{0, b}_{\mp}(S_{\Delta T})} \\
            &\lesa N^{2s - r + 2\epsilon} \| I^2 u \|_{H^{r-2s, b}(S_{\Delta T})} \| I v \|_{X^{0, b}_{\mp}(S_{\Delta T})}.
        \end{align*}

\textbf{$\bullet$ Case 4 ( $hi$-$hi$).} We apply (\ref{lem mod lwp proof - low hi case eqn3}) with $s_1=r$, $s_2=- r + 2\epsilon$ and observe that
    \begin{align*}
            \| I( u_{hi} v_{hi} ) \|_{X^{0, b-1}_{\pm}(S_{\Delta T})} &\lesa \| u_{hi} \|_{H^{r, b}(S_{\Delta T})} \| v_{hi } \|_{X^{ - r + 2\epsilon, b}_{\mp}(S_{\Delta T})} \\
            &\lesa N^{2s - r + 2\epsilon} \| I^2 u \|_{H^{r-2s, b}(S_{\Delta T})} \| I v \|_{X^{0, b}_{\mp}(S_{\Delta T})}
        \end{align*}
where we used the assumption $r>-s$ together with (\ref{Imethod can trade regularity for powers of N}).\\

We now prove prove (\ref{appendix mod lwp - wave est}). We again break $u = u_{low} + u_{hi}$ and $v = v_{low} + v_{hi}$ and consider each of the possible interactions separately. \\

\textbf{$\bullet$ Case 1 ( $low$-$low$).} Corollary \ref{cor wave-sobolev and X^sb estimate} together with the assumption $r - 2s <\frac{1}{2}$ gives
    \begin{align*}
        \| I^2(u_{low} v_{low} ) \|_{H^{r-2s -1, b-1}(S_{\Delta T})} &\lesa \| u_{low} v_{low}  \|_{H^{-\frac{1}{2} , b-1}(S_{\Delta T})} \\
                        &\lesa \| u_{low} \|_{X^{0, \epsilon}_+(S_{\Delta T})} \| v_{low} \|_{X^{0, \epsilon}_-(S_{\Delta T})} \\
                        &\lesa \Delta T^{1 - 2\epsilon} \| I u \|_{X^{0, b}_+(S_{\Delta T})} \| I v\|_{X^{0, b}_{-}(S_{\Delta T})}.
    \end{align*}

\textbf{$\bullet$ Case 2 ( $low$-$hi$).} For the remaining cases we will use the estimate
        \begin{equation}\label{lem mod lwp proof - low hi case eqn4}
                \| \psi \q \|_{H^{-\frac{1}{2}, b-1}} \lesa \| \psi\|_{X^{s_1, b}_+} \| \q\|_{X^{s_2, b}_-}
        \end{equation}
which follows from Corollary \ref{cor wave-sobolev and X^sb estimate} provided
        $$ s_1>-\frac{1}{2},\qquad s_2>-\frac{1}{2}, \qquad s_1 + s_2> -\frac{1}{2} + \epsilon.$$
The $low$-$hi$ case now follows by taking $s_1=0$, $s_2 =  - \frac{1}{2} + 2\epsilon$ and observing that
        \begin{align*}
                  \| I^2 ( u_{low} v_{hi} ) \|_{H^{r-2s -1, b-1}(S_{\Delta T})} &\lesa \| u_{low} v_{hi}  \|_{H^{-\frac{1}{2} , b-1}(S_{\Delta T})} \\
                  &\lesa      \|  u_{low}  \|_{X_+^{0, b}(S_{\Delta T})} \|  v_{hi} \|_{X^{-\frac{1}{2} + 2\epsilon, b}_{-}(S_{\Delta T})}\\
                  &\lesa  N^{-\frac{1}{2} + 2\epsilon} \| I u \|_{X_+^{0, b}(S_{\Delta T})} \| I v\|_{X^{0, b}_{-}(S_{\Delta T})}.
        \end{align*}

\textbf{$\bullet$ Case 3 ( $hi$-$low$).} Follows by taking $s_1 = -\frac{1}{2} + 2\epsilon$, $s_2= 0$ in (\ref{lem mod lwp proof - low hi case eqn4}) and using an identical argument to the previous case. \\

\textbf{$\bullet$ Case 4 ( $hi$-$hi$).} As before, we use (\ref{lem mod lwp proof - low hi case eqn4}) with $s_1 = - \frac{1}{2} +2\epsilon - s$ and $s_2 = s$ and apply a similar argument to the above cases.

\section{Bilinear Estimates}\label{sec bilinear est}

In this section we prove Theorem \ref{thm main X^sb esimate}. To help simplify the proof, we start by introducing some notation. Let $m : \RR^3\times\RR^3 \rightarrow \CC$ and consider the inequality
        \begin{equation}\label{general multiplier estimate}
            \Big| \int_{\Gamma} m(\tau, \xi) \Pi_{j=1}^3 f_j(\tau_j, \xi_j) d\sigma(\tau, \xi)\Big| \lesa \Pi_{j=1}^3 \| f_j \|_{L^2_{\tau, \xi}}
        \end{equation}
where $\tau, \xi \in \RR^3$, $\Gamma = \{ \xi_1 + \xi_2 + \xi_3 =0, \,\,\,\, \tau_1 + \tau_2 +\tau_3 =0\}$, and  $d\sigma$ is the surface measure on the hypersurface $\Gamma$. Without loss of generality, we may assume $f_j\g 0$ as we are using $L^2$ norms on the right hand side of (\ref{general multiplier estimate}). Note that the $X^{s, b}$ estimate contained in Theorem \ref{thm main X^sb esimate} can be written in the form (\ref{general multiplier estimate}) after applying Plancherel and relabeling.

Following Tao in \cite{Tao2001}, for a multiplier $m$,  we use the notation $\| m \|_{[3, \RR\times \RR]}$ to denote the optimal constant in (\ref{general multiplier estimate}). This norm $\| \cdot\|_{[3, \RR \times \RR]}$ was studied in detail in \cite{Tao2001}. We recall the following elementary properties. Firstly, if $m_1 \les m_2$ then it is easy to see that $\| m_1 \|_{[3, \RR\times \RR]} \les \| m_2 \|_{[3, \RR \times \RR]}$. Secondly, via Cauchy-Schwarz, for $j, k  \in \{1, 2, 3\}$, $j \neq k$, we have the characteristic function estimate
        \begin{equation}\label{characteristic function est}
            \| \ind_A(\tau_j, \xi_j) \ind_B(\tau_k, \xi_k) \|_{[3, \RR\times \RR]} \lesa \sup_{(\tau, \xi) \in \RR^2} \big| \{ (\lambda, \eta) \in A \,\,:\,\,(\tau - \tau, \xi - \xi) \in B\, \}\big|^{\frac{1}{2}}
        \end{equation}
where $|\Omega|$ denotes the measure of the set $\Omega \subset \RR^2$. We refer the reader to \cite{Tao2001} for a proof as well a number of other properties of the norm $\| \cdot \|_{[3, \RR \times \RR]}$.

Let $$ \lambda_1 = \tau_1 \pm \xi_1, \qquad \lambda_2 = \tau_2 \pm  \xi_2, \qquad \lambda_3 = \tau_3 \mp \xi_3.$$
Note that if $(\tau, \xi ) \in \Gamma$, then
            \begin{equation}\label{lambda sum} \lambda_1 + \lambda_2 + \lambda_3 = \pm2 \xi_3. \end{equation}
Let $N_j, L_j \in 2^{\NN}$, $j = 1, 2, 3$, be dyadic numbers. Our aim is to decompose the $\xi_j$ and $\lambda_j$ variables dyadically, and reduce the problem of estimating $\| m \|_{[3, \RR \times \RR]}$ to trying to bound the frequency localised version
        $$ \Big\| m(\tau, \xi) \Pi_{j=1}^3 \ind_{\{|\xi_j| \approx N_j, \,|\lambda_j|\approx L_j\}} \Big\|_{[3, \RR \times \RR]}$$
together with computing a dyadic summation. Note that if we restrict $|\xi_j| \approx N_j$, then since $\xi_1+ \xi_2 + \xi_3 =0$ we must have $N_{max} \approx N_{med}$ where, as in the introduction, $N_{max} = \max\{ N_1, N_2, N_3\}$, $N_{med}$ and $N_{min}$ are defined similarly. Similarly, if $|\lambda_j| \approx L_j$, then (\ref{lambda sum}) implies that $L_{max} \approx \max\{ L_{med}, N_3\}$.  Hence
    $$ 1 \approx \sum_{N_{max} \approx N_{med} } \sum_{L_{max} \approx \max\{ N_3, L_{med} \} }\Pi_{j=1}^3 \ind_{\{|\xi_j| \approx N_j, \,|\lambda_j|\approx L_j\}}.$$
Combining these observations with results from \cite{Tao2001} leads to the following.

\begin{lemma}\label{lem reduction to dyadic pieces}
  $$\| m \|_{[3, \RR \times \RR]}  \lesa \sup_{N} \sum_{N_{max} \approx N_{med} \approx N} \sum_{L_{max} \approx \max\{ N_3, L_{med} \} } \Big\| m(\tau, \xi) \Pi_{j=1}^3 \ind_{\{|\xi_j| \approx N_j, \,|\lambda_j|\approx L_j\}} \Big\|_{[3, \RR \times \RR]}.$$
\begin{proof}
The inequality follows from the triangle inequality together with \cite[Lemma 3.11]{Tao2001}. Alternatively, we can just compute by hand. For ease of notation, let $a_{N_1} = \| f_1 \ind_{|\xi_1| \approx N_1} \|_{L^2}$, $b_{N_2} = \| f_2 \ind_{|\xi_2| \approx N_2} \|_{L^2}$, $c_{N_3} = \| f_3 \ind_{|\xi_3| \approx N_3} \|_{L^2}$,  and $A_{N_1, N_2, N_3} = \big\| m(\tau, \xi) \Pi_{j=1}^3 \ind_{|\xi_j| \approx N_j} \big\|_{[3, \RR\times \RR]}$. Then since $\xi_j$ lie on the surface  $\Gamma$, we have  $\xi_1 + \xi_2 + \xi_3 = 0$  and so
      \begin{align*} \int_{\Gamma} m(\tau, \xi) \Pi_{j=1}^3 f_j(\tau_j, \xi_j) d\sigma(\tau, \xi) &= \sum_{N_{max} \approx N_{med} } \sum_{N_{min}
      \les N_{med}} \int_{\Gamma} m(\tau, \xi) \Pi_{j=1}^3 f_j(\tau_j, \xi_j) \ind_{|\xi_j| \approx N_j} d\sigma(\tau, \xi) \\
      &\les \sum_{N_{max} \approx       N_{med}} \sum_{N_{min} \lesa N_{med}} a_{N_1} b_{N_2} c_{N_3} A_{N_1, N_2, N_3}.
      \end{align*}
Without loss of generality we may assume that $N_1 \g N_2 \g N_3$ and so $N_1 \approx N_2$. For simplicity we also assume that $N_1=N_2$ as the general case $N_1 \approx N_2$ is essentially the same. Then
    \begin{align*} \int_{\Gamma} m(\tau, \xi) \Pi_{j=1}^3 f_j(\tau_j, \xi_j) d\sigma(\tau, \xi) &\les \sum_{N_1} a_{N_1} b_{N_1} \sum_{N_3 \les N_1} c_{N_3} A_{N_1, N_1, N_3} \\
    &\lesa \Big( \sup_{N_3} c_{N_3}\Big) \Big(  \sup_{N_1} \sum_{N_3\les N_1} A_{N_1, N_1, N_3} \Big) \sum_{N_1} a_{N_1} b_{N_1} \\
    &\lesa \Big(  \sup_{N_1} \sum_{N_3\les N_1} A_{N_1, N_1, N_3} \Big) \Pi_{j=1}^3 \| f_j \|_{L^2}.
\end{align*}
Thus we have
    $$\| m \|_{[3, \RR \times \RR]}  \lesa \sup_{N} \sum_{N_{max} \approx N_{med} \approx N} \sum_{N_{min}\les N_{med}} \Big\| m(\tau, \xi) \Pi_{j=1}^3 \ind_{\{|\xi_j| \approx N_j \}} \Big\|_{[3, \RR \times \RR]}.$$
To decompose the $\lambda_j$ variables follows an similar argument. We omit the details.

\end{proof}
\end{lemma}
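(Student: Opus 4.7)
The plan is to reduce $\|m\|_{[3,\RR\times\RR]}$ to dyadic frequency blocks by inserting partitions of unity in both the spatial frequencies $\xi_j$ and the modulation variables $\lambda_j$, and then summing the dyadic pieces via a Schur-type argument in the spirit of \cite[Lemma 3.11]{Tao2001}.

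First I would insert $1=\sum_{N_j\in 2^{\NN}}\ind_{|\xi_j|\approx N_j}$ and $1=\sum_{L_j\in 2^{\NN}}\ind_{|\lambda_j|\approx L_j}$ for $j=1,2,3$, and use the triangle inequality for $\|\cdot\|_{[3,\RR\times\RR]}$ to bound the full multiplier norm by the sum of the frequency-localised pieces over all six dyadic parameters. The constraint $\xi_1+\xi_2+\xi_3=0$ on $\Gamma$ forces the two largest of the $|\xi_j|$ to be comparable, so only triples with $N_{max}\approx N_{med}$ contribute. The identity \eqref{lambda sum}, namely $\lambda_1+\lambda_2+\lambda_3=\pm 2\xi_3$, combined with $|\xi_3|\approx N_3$, then forces $L_{max}\approx\max\{N_3,L_{med}\}$: if the largest modulation strictly dominated both $L_{med}$ and $N_3$ it could not be balanced by the remaining two terms on the left.

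The key remaining step is to convert the outer sum over the scale $N\approx N_{max}\approx N_{med}$ into the supremum $\sup_N$ on the right hand side, while leaving the sum over $N_{min}$ and the double sum over the $L_j$ intact. With $a_{N_1}=\|f_1\ind_{|\xi_1|\approx N_1}\|_{L^2}$, $b_{N_2}$, $c_{N_3}$ and $A_{N_1,N_2,N_3}$ denoting the frequency-localised norm, after symmetrising to $N_1\g N_2\g N_3$ (so $N_1\approx N_2$) I would extract $\sup_{N_3}c_{N_3}\les\|f_3\|_{L^2}$ from the inner sum, replace the remaining sum over $N_3\les N_1$ by the uniform bound $\sup_{N_1}\sum_{N_3\les N_1}A_{N_1,N_1,N_3}$, and close the surviving sum $\sum_{N_1}a_{N_1}b_{N_1}$ by Cauchy--Schwarz against $\|f_1\|_{L^2}\|f_2\|_{L^2}$. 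The analogous dyadic decomposition of the $\lambda_j$ at fixed $N_j$ is simply retained as a sum, exactly matching the statement.

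The main (mild) obstacle is the combinatorial bookkeeping: at each stage one must correctly single out the minimum frequency as the variable that is handled by a supremum while the other two are processed by Cauchy--Schwarz, and symmetrise over the six orderings of $(N_1,N_2,N_3)$; the replacement of $N_1\approx N_2$ by strict equality is cosmetic. All of this follows the pattern established in \cite[Lemma 3.11]{Tao2001}, so the real content of the lemma is the algebraic observations $N_{max}\approx N_{med}$ and $L_{max}\approx\max\{N_3,L_{med}\}$ derived from $\Gamma$ and \eqref{lambda sum}.
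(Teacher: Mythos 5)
Your proposal is correct and follows essentially the same approach as the paper: dyadic decomposition in the $\xi_j$ and $\lambda_j$ variables, the observation that the convolution constraints together with (\ref{lambda sum}) force $N_{max}\approx N_{med}$ and $L_{max}\approx\max\{N_3,L_{med}\}$, and a Schur/Cauchy--Schwarz argument (in the spirit of \cite[Lemma 3.11]{Tao2001}) producing the $\sup_N$ over the two comparable largest $\xi$-scales while summing over $N_{min}$ and the $L_j$. The only cosmetic remark is that, as your own second paragraph already makes clear, the $\sup_N$ must come from applying Schur/Cauchy--Schwarz directly to the trilinear integral rather than from a post hoc conversion of a dyadic sum (obtained by the triangle inequality) into a supremum.
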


We now come to the proof of Theorem \ref{thm main X^sb esimate}. To begin with, by taking the Fourier transform and relabeling, the required estimate (\ref{thm main X^sb est - main trilinear est}) is equivalent to showing
\begin{equation}\label{main est fourier side}
            \Big| \int_{\Gamma} \mathfrak{m}(\tau, \xi) \Pi_{j=1}^3 f_j(\tau_j, \xi_j) d\sigma(\tau, \xi)\Big| \lesa \Pi_{j=1}^3 \| f_j \|_{L^2_{\tau, \xi}}
    \end{equation}
where
    $$\mathfrak{m}(\tau, \xi) = \frac{ \al \xi_1 \ar^{-s_1} \al \xi_2 \ar^{-s_2} \al \xi_3 \ar^{-s_3}}{\al\tau_1 \pm \xi_1 \ar^{b_1} \al \tau_2 \pm \xi_2 \ar^{b_2} \al \tau_3 \mp \xi_3 \ar^{b_3} }. $$
Note that Theorem \ref{thm main X^sb esimate} follows from the estimate $\| \mathfrak{m} \|_{[3, \RR \times \RR]} < \infty$. Now since
        $$ \big\| \mathfrak{m} \,\Pi_{j=1}^3 \ind_{\{|\xi_j| \approx N_j, \,|\lambda_j|\approx L_j\}} \big\|_{[3, \RR\times \RR]} \approx \big\|\Pi_{j=1}^3 \ind_{\{|\xi_j| \approx N_j, \,|\lambda_j|\approx L_j\}} \big\|_{[3, \RR\times \RR]} \Pi_{j=1}^3 N_j^{-s_j} L_j^{-b_j},$$
an application of Lemma \ref{lem reduction to dyadic pieces} shows that is suffices to estimate, for every $N \in 2^{\NN}$,
             \begin{align}\label{main dyadic sum}
                 \sum_{N_{max} \approx N_{med} \approx N} N_1^{-s_1} N_2^{-s_2} N_3^{-s_3} \sum_{L_{max} \approx \max\{ L_{med}, N_3\} } L_1^{-b_1} L_2^{-b_2} L_3^{-b_3} \big\|  \Pi_{j=1}^3 \ind_{\{|\xi_j|\approx N_j, \, |\lambda_j| \approx L_j \}}\|_{[3, \RR \times \RR]}. \end{align}
The first step to estimate this sum is the following estimate on the size of the frequency localised multiplier.
\begin{lemma}\label{lem dyadic multiplier estimate}
   $$ \big\|  \Pi_{j=1}^3 \ind_{\{|\xi_j|\approx N_j, \, |\lambda_j| \approx L_j \}}\|_{[3, \RR \times \RR]} \lesa \min\Big\{ N_{min}^{\frac{1}{2}} L_{min}^{\frac{1}{2}}, \,L_1^{\frac{1}{2}} L_3^{\frac{1}{2}}, \,L_2^{\frac{1}{2}}L_3^{\frac{1}{2}}\Big\} $$
\begin{proof}
Let $I= \big\|  \Pi_{j=1}^3 \ind_{\{|\xi_j|\approx N_j, \, |\lambda_j| \approx L_j \}}\|_{[3, \RR \times \RR]} $.  If we  let $A= \ind_{|\lambda_j|\approx L_j, \,\, |\xi_j |\approx N_j}$ and $B = \ind_{|\lambda_k| \approx L_k, \, |\xi_k| \approx N_k}$ in (\ref{characteristic function est}), then an application of Fubini gives
    \begin{align*}
        I       &\lesa \big\|   \ind_{\{|\xi_j|\approx N_j, \, |\lambda_j| \approx L_j \}} \ind_{\{|\xi_k|\approx N_k, \, |\lambda_k| \approx L_k \}} \|_{[3, \RR \times \RR]} \\
            &\lesa \sup_{\lambda, \xi \in \RR} \big| \big\{ |\lambda_j| \approx L_j \,\, : \,\, |\lambda - \lambda_j| \approx L_k \big\} \big|^{\frac{1}{2}} \big| \big\{ |\xi_j| \approx N_j \,\, : \,\, |\xi - \xi_j| \approx N_k \, \big\} \big|^{\frac{1}{2}}\\
            &\lesa \min\{ L_j^{\frac{1}{2}}, L_k^{\frac{1}{2}}\} \min\{ N_j^{\frac{1}{2}}, N_k^{\frac{1}{2}} \}
        \end{align*}
and hence $I \lesa L_{min}^{\frac{1}{2}} N_{min}^{\frac{1}{2}}$. On the other hand, another application of (\ref{characteristic function est}) together with a change of variables gives
    \begin{align*}
      I \lesa \big\|   \ind_{\{|\lambda_1| \approx L_1 \}} \ind_{\{ |\lambda_3| \approx L_3 \}} \|_{[3, \RR \times \RR]}
        &\lesa \sup_{\tau, \xi \in \RR} \big| \big\{ |\tau_1 \pm \xi_1| \approx L_1\,\, : \, |\tau \mp\xi - (\tau_1 \mp \xi_1) | \approx L_3 \big\} \big|^{\frac{1}{2}}\\
        &\lesa L_1^{\frac{1}{2}} L_3^{\frac{1}{2}}.
    \end{align*}
A similar argument gives $I \lesa L_2^{\frac{1}{2}} L_3^{\frac{1}{2}}$ and hence lemma follows.
\end{proof}
\end{lemma}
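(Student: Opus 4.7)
The plan is to prove each of the three upper bounds in the minimum separately, in every case by applying the characteristic function estimate (\ref{characteristic function est}) to two of the three indicators (discarding the third using monotonicity of $\| \cdot \|_{[3, \RR \times \RR]}$), and then computing an explicit measure by Fubini.

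For the bound $I \lesa N_{min}^{1/2} L_{min}^{1/2}$, I would choose indices $j\neq k$ so that the pair $(j,k)$ realises both the smallest value of $L$ and of $N$ (which is possible since there are three indices and we only need two distinct ones). Applying (\ref{characteristic function est}) with $A=\{|\xi_j|\approx N_j,\,|\lambda_j|\approx L_j\}$ and $B=\{|\xi_k|\approx N_k,\,|\lambda_k|\approx L_k\}$, the resulting set whose measure we must bound factors by Fubini into a $\lambda$-slice of length $\lesa \min(L_j,L_k)$ (the intersection of two intervals of lengths $L_j,L_k$) and a $\xi$-slice of length $\lesa \min(N_j,N_k)$. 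Taking square roots delivers the claim.

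For the bounds $I\lesa L_1^{1/2}L_3^{1/2}$ and $I\lesa L_2^{1/2}L_3^{1/2}$ I would discard the spatial frequency localisations altogether and apply (\ref{characteristic function est}) to the pair $\ind_{|\lambda_1|\approx L_1}\,\ind_{|\lambda_3|\approx L_3}$ (respectively with indices $2,3$). The point that makes this work, and the step which requires the most care, is the sign asymmetry $\lambda_1=\tau_1\pm\xi_1$ versus $\lambda_3=\tau_3\mp\xi_3$: performing the linear change of variables $(\tau_1,\xi_1)\mapsto(\lambda_1,\xi_1)$ and then expressing $\lambda_3$ in terms of $(\lambda_1,\xi_1,\tau,\xi)$ using $\tau_3=-\tau_1-\tau_2$ and $\xi_3=-\xi_1-\xi_2$ shows that $\lambda_3$ depends nontrivially (with unit coefficient) on the free variable $\xi_1$, so the set in (\ref{characteristic function est}) is contained in a product of intervals of lengths $\lesa L_1$ and $\lesa L_3$.

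The main obstacle is precisely that transversality check: if all three $\lambda_j$'s had the same sign then the identity $\lambda_1+\lambda_2+\lambda_3=\pm 2\xi_3$ would instead read $\lambda_1+\lambda_2+\lambda_3=0$ on $\Gamma$, the three $\lambda$'s would be affinely dependent, and no bound involving only two of the $L_j$'s could hold. Once this geometric point is recorded, the three estimates combine into the stated minimum.
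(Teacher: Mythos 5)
Your proposal is correct and follows essentially the same route as the paper: apply the characteristic function estimate \eqref{characteristic function est} to a suitably chosen pair of the three indicators, using the full localisation in both $\lambda$ and $\xi$ for the $N_{min}^{1/2}L_{min}^{1/2}$ bound and only the $\lambda$ localisations (with a change of variables exploiting the $\pm$/$\mp$ asymmetry between $\lambda_1,\lambda_2$ and $\lambda_3$) for the $L_1^{1/2}L_3^{1/2}$ and $L_2^{1/2}L_3^{1/2}$ bounds. One small inaccuracy worth flagging: in the coordinates $(\lambda_1,\xi_1)$ you propose, one computes $\lambda_3 = (\tau\mp\xi) - \lambda_1 \pm 2\xi_1$, so the coefficient of $\xi_1$ is $\pm 2$ rather than $1$, but the Jacobian of $(\tau_1,\xi_1)\mapsto(\lambda_1,\lambda_3)$ is still bounded away from zero, so the measure bound and hence the conclusion are unaffected.
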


We are now ready to preform the computations needed to estimate the dyadic summation (\ref{main dyadic sum}). We split this into two parts, by computing the inner summation and then the outer summation. We note the following estimate
        $$ \sum_{a\les N \les b} N^{\delta} \approx \begin{cases}
                                a^{\delta} \qquad \qquad&\delta<0 \\
                                \log(b)             &\delta=0\\
                                b^{\delta}          &\delta>0 \end{cases}$$
which we use repeatedly. Moreover, we have $\log( r ) \lesa r^\epsilon$ for any $\epsilon>0$ and $r\g 1$.

\begin{lemma}\label{lem inner summation}
Let $b_j + b_k>0$ and $b_1 + b_2 + b_3>\frac{1}{2}$. Then for any sufficiently small $\epsilon>0$
       \begin{align*} \sum_{L_{max} \approx \max\{ L_{med}, N_3\}}& L^{-b_1}_1 L^{-b_2}_2 L^{-b_3}_3 \big\|  \Pi_{j=1}^3 \ind_{\{|\xi_j|\approx N_j, \, |\lambda_j| \approx L_j \}}\|_{[3, \RR \times \RR]}\\
                &\lesa N_3^\epsilon \,\Big(
                    N_3^{\frac{1}{2} - b_1 - b_2 - b_3} N_{min}^\frac{1}{2} + N_{3}^{-b_3} N_{min}^{\frac{1}{2}} + N_3^{-b_{min}} N_{min}^{(\frac{1}{2} - b_{max})_+ +\, (\frac{1}{2} - b_{med})_+}\Big).
                    \end{align*}
\begin{proof}
We split into the cases $L_{med} \les N_3 $ and $L_{med}\g N_3$. \\

\textbf{ $\bullet$ Case 1 ($L_{med} \les N_3$).} Since the the righthand side of Lemma \ref{lem dyadic multiplier estimate} does not behave symmetrically with respect to the sizes of the $L_j$, we need to decompose further into $L_{max} =L_3$ and $L_{max} \neq L_3$. \\

 \textbf{$\bullet$ Case 1a ($L_{med} \les N_3$ and $L_{max} \neq L_3$).} We have by Lemma \ref{lem dyadic multiplier estimate}
    $$ \big\|  \Pi_{j=1}^3 \ind_{\{|\xi_j|\approx N_j, \, |\lambda_j| \approx L_j \}}\big\|_{[3, \RR \times \RR]}
                        \lesa L_{min}^{\frac{1}{2}} \min\{ N_{min}^{\frac{1}{2}}, L_{med}^{\frac{1}{2}}\}.$$
 Since the righthand side is symmetric under permutations of $\{1, 2, 3\}$, we may assume $L_1 \g L_2 \g L_3$. Then for any $\epsilon>0$
     \begin{align}
      \sum_{L_{max} \approx N_3 \gtrsim L_{med} } L^{-b_1}_1 L^{-b_2}_2 L^{-b_3}_3 \big\|  \Pi_{j=1}^3 &\ind_{\{|\xi_j|\approx N_j, \, |\lambda_j| \approx L_j \}}\big\|_{[3, \RR \times \RR]} \notag \\
      &\lesa N^{-b_1}_3 \sum_{L_2 \les N_3} L^{-b_2}_2 \min\{ N_{min}^{\frac{1}{2}}, L_2^{\frac{1}{2}}\} \sum_{L_3 \les L_2} L_3^{\frac{1}{2}-b_3}\notag\\
      &\lesa N^{-b_1}_3 \sum_{L_2 \les N_3} L^{(\frac{1}{2}- b_3)_+ - b_2}_2 \log(L_2) \min\{ N_{min}^{\frac{1}{2}}, L_2^{\frac{1}{2}}\} \notag\\
      &\lesa N^{-b_1 + \frac{\epsilon}{2} }_3 \sum_{L_2 \les N_{min}}  L_2^{(\frac{1}{2} - b_3)_+ \,+\, \frac{1}{2} - b_2}  \notag\\
      &\qquad\qquad + N_{min}^{\frac{1}{2}} N^{-b_1 + \frac{\epsilon}{2} }_3 \sum_{N_{min} \les L_2 \les N_3} L^{(\frac{1}{2} - b_3)_+ - b_2}_2   \label{lem dyadic multiplier estimate - eqn 4}
        \end{align}
Now for the first sum in (\ref{lem dyadic multiplier estimate - eqn 4}) we have
    \begin{align*}
         N^{-b_1}_3 \sum_{L_2 \les N_{min}}  L_2^{(\frac{1}{2} - b_3)_+ \,+\, \frac{1}{2} - b_2}
                &\lesa  N_{min}^{ \big((\frac{1}{2} - b_3)_+ + \frac{1}{2} - b_2\big)_+ } N_3^{ - b_1} \log(N_{min})\\
                &\lesa   N_{min}^{ (\frac{1}{2} - b_{max})_+ +( \frac{1}{2} - b_{med} )_+ } N_3^{ - b_{min} + \frac{\epsilon}{2} }.
    \end{align*}
For the second sum we first consider the case $(\frac{1}{2} - b_3)_+ - b_2>0$. Then
             \begin{align*}
               N_{min}^{\frac{1}{2}} N^{-b_1}_3 \sum_{N_{min} \les L_2 \les N_3} L^{(\frac{1}{2} - b_3)_+ - b_2}_2
                    &\lesa  N^{\frac{1}{2}}_{min} N_3^{(\frac{1}{2} - b_3)_+ -b_1 - b_2  } \\
                    &\lesa N^{\frac{1}{2}}_{min} N_3^{(\frac{1}{2} - b_{max})_+ - b_{med} - b_{min}}
             \end{align*}
On the other hand if $(\frac{1}{2} - b_3)_+ - b_2\les 0$ we get
        \begin{align*}
               N_{min}^{\frac{1}{2}} N^{-b_1}_3 \sum_{N_{min} \les L_2 \les N_3} L^{(\frac{1}{2} - b_3)_+ - b_2}_2
                        &\lesa  N^{\frac{1}{2} - b_2 + (\frac{1}{2}- b_3)_+}_{min} N_3^{-b_1} \log(N_3) \\
                        &\lesa N^{(\frac{1}{2} - b_{max})_+ + (\frac{1}{2} - b_{med})_+ }_{min} N^{-b_{min}+\frac{\epsilon}{2}}_3. \end{align*}
Together with (\ref{lem dyadic multiplier estimate - eqn 4}) this then gives
    \begin{align*}
        \sum_{L_{max} \approx N_3 \gtrsim L_{med} } L^{-b_1}_1 L^{-b_2}_2 &L^{-b_3}_3 \big\|  \Pi_{j=1}^3 \ind_{\{|\xi_j|\approx N_j, \, |\lambda_j| \approx L_j \}}\big\|_{[3, \RR \times \RR]} \\
      &\lesa N_3^\epsilon \Big(  N_3^{(\frac{1}{2} - b_{max})_+ - b_{med} - b_{min}}N^{\frac{1}{2}}_{min}  + N_3^{-b_{min}} N_{min}^{(\frac{1}{2} - b_{max})_+ + (\frac{1}{2} - b_{med})_+}\Big) \\
      &\lesa N_3^\epsilon \Big(  N_3^{\frac{1}{2} - b_1 - b_2 - b_3}N^{\frac{1}{2}}_{min}  + N_3^{-b_{min}} N_{min}^{(\frac{1}{2} - b_{max})_+ + (\frac{1}{2} - b_{med})_+}\Big)
      \end{align*}
where we used the inequality
            \begin{equation}\label{lem dyadic multiplier estimate - eqn 1} N^{\frac{1}{2}}_{min} N^{(\frac{1}{2} - b_{max})_+ - b_{med} - b_{min}}_3 \les N^{\frac{1}{2}}_{min} N_3^{\frac{1}{2} - b_1 - b_2 - b_3} + N^{(\frac{1}{2} - b_{max})_+ + (\frac{1}{2} - b_{med})_+}_{min} N_3^{-b_{min}} .\end{equation}
which is trivial if $b_{max}<\frac{1}{2}$. On the other hand, if $b_{max} \g \frac{1}{2}$, then (\ref{lem dyadic multiplier estimate - eqn 1}) follows by noting that since $b_j + b_k > 0$ we have $b_{med} > 0$ and so
         $$ N_{min}^{\frac{1}{2} } N_{3}^{-b_{med} - b_{min}} \les N^{\frac{1}{2} - b_{med}}_{min} N_3^{-b_{min}} \les
          N_{min}^{(\frac{1}{2} - b_{med})_+} N_3^{-b_{min}}$$
as required. \\

\textbf{ $\bullet$ Case 1b ($L_{med} \les N_3$ and $L_{max} = L_3$).}  Lemma \ref{lem dyadic multiplier estimate} together with the assumption $L_{max} = L_3$ gives
        $$ \big\|  \Pi_{j=1}^3 \ind_{\{|\xi_j|\approx N_j, \, |\lambda_j| \approx L_j \}}\big\|_{[3, \RR \times \RR]}
                        \lesa L_{min}^{\frac{1}{2}} N_{min}^{\frac{1}{2}}.$$
 Suppose $L_1 \les L_2$. Then
    \begin{align}
       \sum_{L_{max} \approx N_3 \gtrsim L_{med} } L^{-b_1}_1 L^{-b_2}_2 L^{-b_3}_3 \big\|  \Pi_{j=1}^3 &\ind_{\{|\xi_j|\approx N_j, \, |\lambda_j| \approx L_j \}}\big\|_{[3, \RR \times \RR]}\label{lem dyadic multiplier estimate - eqn 2} \\
        &\lesa N_{min}^{\frac{1}{2}} N_3^{-b_3} \sum_{L_2 \les N_3 } L_2^{ - b_2} \sum_{L_1\les L_2} L_1^{\frac{1}{2} - b_1} \notag \\ \notag
        &\lesa N_{min}^{\frac{1}{2}} N_3^{-b_3} \sum_{L_2 \les N_3 } L_2^{ (\frac{1}{2}-b_1)_+ - b_2} \log(L_2) \\ \notag
        &\lesa  N_{min}^{\frac{1}{2}} N_3^{((\frac{1}{2} - b_1)_+ - b_2)_+ - b_3 + \epsilon}
    \end{align}
 for any $\epsilon>0$. If we have
        \begin{equation}
          \label{lem dyadic multiplier estimate - eqn 3}
            N_{min}^{\frac{1}{2}} N_3^{((\frac{1}{2} - b_1)_+ - b_2)_+ - b_3}\les
            N_3^{\frac{1}{2} - b_1 - b_2 - b_3} N_{min}^\frac{1}{2} + N_{3}^{-b_3} N_{min}^{\frac{1}{2}} + N_3^{-b_{min}} N_{min}^{(\frac{1}{2} - b_{max})_+ + (\frac{1}{2} - b_{med})_+}
        \end{equation}
then we get
    $$ (\ref{lem dyadic multiplier estimate - eqn 2}) \lesa N_3^\epsilon \Big(  N_3^{\frac{1}{2} - b_1 - b_2 - b_3} N_{min}^\frac{1}{2} + N_{3}^{-b_3} N_{min}^{\frac{1}{2}} + N_3^{-b_{min}} N_{min}^{(\frac{1}{2} - b_{max})_+ + (\frac{1}{2} - b_{med})_+}\Big)$$
as required. The case $L_1 \g L_2$ follows an identical argument and so it remains to show (\ref{lem dyadic multiplier estimate - eqn 3}). To this end note that if $(\frac{1}{2} - b_1)_+ - b_2 <0$ then we simply have
        $$ N_{min}^{\frac{1}{2}} N_3^{((\frac{1}{2} - b_1)_+ - b_2)_+ - b_3} = N_{min}^{\frac{1}{2}} N_3^{- b_3}. $$
On the other hand, if $(\frac{1}{2} - b_1 )_+ - b_2 \g 0$, then by using (\ref{lem dyadic multiplier estimate - eqn 1}) we have
    \begin{align*}
        N_{min}^{\frac{1}{2}} N_3^{((\frac{1}{2} - b_1)_+ - b_2)_+ - b_3}&=N^{\frac{1}{2}}_{min} N_3^{(\frac{1}{2} - b_1)_+ - b_2 - b_3}\\
                        &\les N^{\frac{1}{2}}_{min} N_3^{(\frac{1}{2} - b_{max})_+ - b_{med} - b_{min}}\\
                        &\les  N^{\frac{1}{2}}_{min} N_3^{\frac{1}{2} - b_1 - b_2 - b_3} + N^{(\frac{1}{2} - b_{max})_+ + (\frac{1}{2} - b_{med})_+}_{min} N_3^{-b_{min}}
    \end{align*}
and so we obtain (\ref{lem dyadic multiplier estimate - eqn 3}).\\

\textbf{$\bullet$ Case 2 ($ L_{med} \g N_3$).} In this case we have $L_{max} \approx L_{med}$ and by Lemma \ref{lem dyadic multiplier estimate}
        $$ \big\|  \Pi_{j=1}^3 \ind_{\{|\xi_j|\approx N_j, \, |\lambda_j| \approx L_j \}}\|_{[3, \RR \times \RR]} \lesa N^{\frac{1}{2}}_{min} L^{\frac{1}{2}}_{min} .$$
Suppose $L_1 \g L_2 \g L_3$. Then
    \begin{align*}
      \sum_{L_{max} \approx L_{med} \gtrsim N_3} L^{-b_1}_1 L^{-b_2}_2 L^{-b_3}_3 N^{\frac{1}{2}}_{min} L^{\frac{1}{2}}_{min}
        &\approx  N_{min}^{\frac{1}{2}} \sum_{L_2 \gtrsim N_3} L^{-b_1 -b_2}_2 \sum_{L_3 \les L_2} L^{\frac{1}{2} - b_3}_3 \\
        &\lesa N^{\frac{1}{2}}_{min} \sum_{L_2 \gtrsim N_3} L^{(\frac{1}{2} - b_3)_+ \, -b_1 -b_2}_2 \log(L_2) \\
        &\lesa N^{\frac{1}{2}}_{min} N^{ (\frac{1}{2} - b_3)_+ - b_1 - b_2 + \epsilon}_3\\
        &\lesa N^{\frac{1}{2}}_{min} N^{ (\frac{1}{2} - b_{max})_+ - b_{med} - b_{min} + \epsilon}_3
    \end{align*}
provided $b_1  + b_2 + b_3 > \frac{1}{2}$, $b_j + b_k > 0$, and we choose $\epsilon>0$  sufficiently small.
Since this argument also holds for all other size combinations of the $L_j$, we get from (\ref{lem dyadic multiplier estimate - eqn 1})
        \begin{align*}\sum_{L_{max} \approx L_{med} \gtrsim N_3} L^{-b_1}_1 L^{-b_2}_2 L^{-b_3}_3 \big\|  \Pi_{j=1}^3 &\ind_{\{|\xi_j|\approx N_j, \, |\lambda_j| \approx L_j \}}\big\|_{[3, \RR \times \RR]}\\
            &\lesa N_3^\epsilon\Big( N_3^{\frac{1}{2} - b_1 - b_2 - b_3} N^{\frac{1}{2}}_{min}  + N_3^{-b_{min}} N^{(\frac{1}{2} - b_{max})_+ + (\frac{1}{2} - b_{med})_+}_{min}\Big) \end{align*}
and so lemma follows.

\end{proof}
\end{lemma}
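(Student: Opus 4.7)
The plan is to apply the dyadic multiplier bound from Lemma \ref{lem dyadic multiplier estimate} and then carry out the dyadic summation in $L_1, L_2, L_3$ by splitting the summation region into natural pieces determined by the constraint $L_{max} \approx \max\{L_{med}, N_3\}$. I would first split according to whether $L_{med} \les N_3$, so that $L_{max} \approx N_3$, or $L_{med} \g N_3$, so that $L_{max} \approx L_{med}$. Within each regime I would choose whichever of the three alternative bounds from Lemma \ref{lem dyadic multiplier estimate} gives the best control.

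When $L_{med} \g N_3$ the bound $N_{min}^{1/2} L_{min}^{1/2}$ is symmetric in the three indices, so after fixing an ordering (say $L_1 \g L_2 \g L_3$) I can compute the geometric series $\sum_{L_3 \les L_2} L_3^{1/2 - b_3}$, followed by $\sum_{L_2 \gtrsim N_3} L_2^{(1/2 - b_3)_+ - b_1 - b_2}$. The hypotheses $b_1 + b_2 + b_3 > \tfrac12$ and $b_j + b_k > 0$ guarantee that the final exponent on $N_3$ is negative, and logarithmic losses are absorbed into $N_3^\epsilon$. This produces a contribution bounded by $N_{min}^{1/2} N_3^{(1/2 - b_{max})_+ - b_{med} - b_{min} + \epsilon}$, which will be dominated by the first and third terms of the claimed estimate.

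When $L_{med} \les N_3$ the analysis is more delicate because the alternative bounds $L_1^{1/2} L_3^{1/2}$ and $L_2^{1/2} L_3^{1/2}$ both single out $L_3$. I would further split according to whether $L_{max} = L_3$, in which case only the symmetric bound $N_{min}^{1/2} L_{min}^{1/2}$ is useful and $L_3 \approx N_3$ can be factored out to produce a clean factor of $N_3^{-b_3}$, or $L_{max} \in \{L_1, L_2\}$, in which case the sharper combined bound $L_{min}^{1/2}\min\{N_{min}^{1/2}, L_{med}^{1/2}\}$ is available. In the former sub-case the $L_1, L_2$ sums are straightforward geometric series, handled via the usual case analysis on the sign of the relevant exponent. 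In the latter sub-case the minimum inside the multiplier bound must be split according to whether $L_{med} \les N_{min}$ or $N_{min} \les L_{med} \les N_3$, each piece again evaluated as a geometric series.

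The main obstacle is packaging the resulting array of exponents into exactly the three terms on the right-hand side of the claimed estimate. This will be accomplished by the elementary inequality
$$ N_{min}^{1/2} N_3^{(1/2 - b_{max})_+ - b_{med} - b_{min}} \les N_{min}^{1/2} N_3^{1/2 - b_1 - b_2 - b_3} + N_{min}^{(1/2 - b_{max})_+ + (1/2 - b_{med})_+} N_3^{-b_{min}}, $$
which is trivial when $b_{max} < \tfrac12$ and otherwise exploits the fact that $b_j + b_k > 0$ forces $b_{med} > 0$, allowing one to trade $N_3^{-b_{med}}$ for $N_{min}^{-b_{med}}$. Combined with $\log r \lesa r^\epsilon$ for $r \g 1$ to absorb logarithms, this delivers the three-term bound with the stated $N_3^\epsilon$ loss.
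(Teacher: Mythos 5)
Your proposal is correct and follows essentially the same route as the paper: the same split into $L_{med}\lesssim N_3$ versus $L_{med}\gtrsim N_3$, the same further sub-split of the first case according to whether $L_{max}=L_3$, the same choice of which bound from Lemma \ref{lem dyadic multiplier estimate} to use in each regime (including splitting the minimum $\min\{N_{min}^{1/2},L_{med}^{1/2}\}$ at $L_{med}\approx N_{min}$), and the same elementary inequality to collapse the resulting exponents into the three-term right-hand side, absorbing logarithms into $N_3^\epsilon$.
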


We now come to the proof of Theorem \ref{thm main X^sb esimate}.

\begin{proof}[Proof of Theorem \ref{thm main X^sb esimate}] By Lemma \ref{lem reduction to dyadic pieces} and Lemma \ref{lem inner summation} it suffices to estimate the sum
        $$ \sup_{N} \sum_{N_{max} \approx N_{med} \approx N}  \Big(\Pi_{j=1}^3 N_{j}^{-s_j}\Big)  N_{min}^{\alpha} N_3^{-\beta} $$
for the pairs
    $$(\alpha, \beta) \in \Bigg\{  \Bigg(\frac{1}{2}, \,\,\,b_1 + b_2 + b_3 - \frac{1}{2} - \epsilon\Bigg),\,\,\, \Bigg( \frac{1}{2}, \,\,\,b_3 - \epsilon \Bigg), \,\,\,\Bigg( \Big(\frac{1}{2} - b_{max}\Big)_+ + \Big( \frac{1}{2} - b_{med}\Big)_+,\,\,\, b_{min}  - \epsilon \Bigg) \Bigg\} $$
where $\epsilon>0$ may be taken arbitrarily small. Let $s_1' = s_1$, $s_2' = s_2$, and $s_3' = s_3 + \beta$. Then we have to show
            $$ \sup_{N} \sum_{N_{max} \approx N_{med} \approx N}  \Big(\Pi_{j=1}^3 N_{j}^{-s_j'}\Big)  N_{min}^{\alpha}  < \infty.$$
    Since this summation is symmetric with respect to the $N_j$, we may assume $ N_1 \les N_2 \les N_3$. Then
            \begin{align*}
               \sum_{N_{max} \approx N_{med} \approx N}  \Big(\Pi_{j=1}^3 N_{j}^{-s_j'}\Big)  N_{min}^{\alpha}
                                &\lesa N^{-s_2' - s_3'} \sum_{N_1 \les N} N_1^{ -s_1'  + \alpha} <\infty
            \end{align*}
    provided $s_j'  + s_k' \g 0$ and $s_1' + s_2' + s_3' > \alpha$. These conditions hold by the assumptions in Theorem \ref{thm main X^sb esimate} provided we choose $\epsilon$ sufficiently small.

\end{proof}

\section{Counter Examples}\label{sec counter examples}

Here we prove that the conditions in Theorem \ref{thm main X^sb esimate} are sharp up to equality.

\begin{proposition}\label{prop - counter examples}
Assume the estimate (\ref{thm main X^sb est - main trilinear est}) holds. Then we must have
\begin{equation} b_j + b_k \geqslant 0, \qquad b_1+ b_2 + b_3  \geqslant \frac{1}{2} \label{prop - counter examples - b cond}\end{equation}
    and for $k \in \{ 1, 2\}$
      \begin{align} \label{prop - counter examples - s cond 1} s_1 + s_2 &\geqslant 0, \\
       s_k + s_3  &\geqslant -b_{min},\label{prop - counter examples - s cond 2}\\
       s_k +s_3&\geqslant \frac{1}{2} - b_1 - b_2 - b_3, \label{prop - counter examples - s cond 3}\\
       s_1 + s_2 + s_3 &\geqslant  \frac{1}{2} - b_3,\label{prop - counter examples - s cond 4}\\
       s_1 + s_2 + s_3 &\geqslant \Big(\frac{1}{2} - b_{max}\Big)_+ + \Big(\frac{1}{2} - b_{med}\Big)_+ - b_{min}. \label{prop - counter examples - s cond 5}\end{align}
\end{proposition}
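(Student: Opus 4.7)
The plan is to prove each inequality in (\ref{prop - counter examples - b cond})--(\ref{prop - counter examples - s cond 5}) is necessary by exhibiting an explicit one-parameter family of counterexamples. In every case I will take $\widetilde{\psi}_j = \ind_{A_j}$, where $A_j \subset \RR^2$ is a box on which $|\xi| \approx N_j$ and $|\tau \pm_j \xi| \approx L_j$; the right-hand side of (\ref{thm main X^sb est - main trilinear est}) then evaluates to $\Pi_{j=1}^3 N_j^{s_j} L_j^{b_j} |A_j|^{1/2}$, while by Plancherel the trilinear integral reduces to the Lebesgue measure of the intersection $\Gamma \cap (A_1 \times A_2 \times A_3)$, with $\Gamma=\{\tau_1+\tau_2+\tau_3=0,\, \xi_1+\xi_2+\xi_3=0\}$ as in Section~\ref{sec bilinear est}. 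The compatibility of these box configurations on $\Gamma$ is governed by the resonance identity $\lambda_1+\lambda_2+\lambda_3 = \pm 2\xi_3$ derived in Section~\ref{sec bilinear est}. By appropriate choices of $N_j, L_j$, and the orientation of the boxes, I can then drive the ratio of the two sides of (\ref{thm main X^sb est - main trilinear est}) to infinity as $N\to\infty$.

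The conditions (\ref{prop - counter examples - b cond}) involving only $b_j$ are the standard one-dimensional Sobolev product lower bounds, obtained by taking $N_j \approx 1$ so that the spatial weights are irrelevant and varying the $L_j$. For (\ref{prop - counter examples - s cond 1}) I use a high-high-to-low configuration: $N_1 \approx N_2 \approx N$, $N_3 \approx 1$, and $L_j \approx 1$ for all $j$. Compatibility on $\Gamma$ is immediate, the trilinear integral has size $\approx 1$, and the norm product reduces to $\approx N^{s_1+s_2}$, forcing $s_1+s_2 \geq 0$ as $N\to\infty$. For (\ref{prop - counter examples - s cond 2}) I use a high-low-high configuration with $N_k \approx N_3 \approx N$ and $N_{3-k}\approx 1$; since $|\xi_3|\approx N$, the resonance identity forces some modulation $L_{j_0} \approx N$, while I am free to choose which modulation to inflate, so I select $j_0$ to be the index minimising $b_j$. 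The right-hand side then becomes $\approx N^{s_k+s_3 + b_{min}}$ while the trilinear integral remains $\approx 1$. Conditions (\ref{prop - counter examples - s cond 3}) and (\ref{prop - counter examples - s cond 4}) are handled by analogous configurations (high-low-high and all-high respectively) in which the $L_j$ are inflated to a common scale $L$ and then sent to $\infty$; the $\frac{1}{2}$ contributions originate from the $|A_j|^{1/2}$ factors in the $L^2$ normalisation of the thicker modulation shells.

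The main obstacle is the final condition (\ref{prop - counter examples - s cond 5}), which is the tightest because it simultaneously saturates the $b_{max}$, $b_{med}$, and $b_{min}$ weights. Here I will use an all-high configuration $N_j \approx N$, select $j_0 = \arg\min_j b_j$, and take $L_{j_0}\approx N$ (forced by the resonance identity), while the remaining two indices carry thick modulation shells of common width $L$ with $1\les L\les N$ optimised at the end. The two factors $\bigl(\frac{1}{2}-b_{max}\bigr)_+$ and $\bigl(\frac{1}{2}-b_{med}\bigr)_+$ arise from the $L^{1/2}$ losses in $|A_j|^{1/2}$ precisely when $b_j<\frac{1}{2}$, since in that regime enlarging the $\lambda$-shell improves the counterexample. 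The delicate step is to verify that after imposing the thick shell at scale $N$ on $\lambda_{j_0}$ and width-$L$ shells on the other two modulations, the intersection on $\Gamma$ still has volume of order $L^2|A_{j_0}|$ rather than suffering additional cancellation; this amounts to a careful parametrisation of $\Gamma$ by two of the $\zeta_j$ and checking that the three box constraints plus the resonance identity can all be satisfied on a region of the expected size.
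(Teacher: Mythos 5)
Your overall framework is the same as the paper's: take $\widetilde{\psi}_j=\ind_{A_j}$ for suitably positioned regions $A_j$, express the left-hand side of (\ref{thm main X^sb est - main trilinear est}) as the measure of $\Gamma\cap(A_1\times A_2\times A_3)$, and compare powers of $\lambda$. The constructions you sketch for (\ref{prop - counter examples - b cond}), (\ref{prop - counter examples - s cond 1}), (\ref{prop - counter examples - s cond 2}), and (\ref{prop - counter examples - s cond 3}) are correct (modulo the usual verification that the boxes can be positioned consistently, which is exactly what the paper's explicit sets do). However, there are genuine gaps in your treatment of (\ref{prop - counter examples - s cond 4}) and (\ref{prop - counter examples - s cond 5}).

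For (\ref{prop - counter examples - s cond 4}), the configuration ``all-high, $L_j$ inflated to a common scale $L$'' does not produce the condition $s_1+s_2+s_3\geqslant\frac12-b_3$. If all $L_j\approx L$, the resonance identity $\lambda_1+\lambda_2+\lambda_3=\pm2\xi_3$ together with $|\xi_3|\approx N$ forces $L\approx N$, and a direct count gives $V\approx N^4$, $\prod_j N_j^{s_j}L_j^{b_j}|A_j|^{1/2}\approx N^{s_1+s_2+s_3+b_1+b_2+b_3+3}$, hence only the weaker necessary condition $s_1+s_2+s_3\geqslant 1-b_1-b_2-b_3$. What is actually needed (and what the paper uses) is the \emph{free-wave} configuration: slabs of thickness $O(1)$ aligned with the $\tau=-\xi$ characteristic, so that $L_1\approx L_2\approx 1$, $L_3\approx N$, and $|A_j|\approx N$ (not $N^2$) for every $j$; then $V\approx N^2$ while the norms contribute $N^{s_1+s_2+s_3+b_3+3/2}$, which yields precisely $\frac12-b_3$. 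Note that $A_3$ is a \emph{slanted} strip in the $(\xi_3,\lambda_3)$ plane since $\lambda_3$ correlates with $\xi_3$; an axis-aligned box of $\lambda_3$-width $N$ overshoots the $L^2$ norm by $N^{1/2}$ and loses the $\frac12$.

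For (\ref{prop - counter examples - s cond 5}), your scheme fixes $L_{j_0}\approx N$ with $j_0=\arg\min_j b_j$ and puts a \emph{common} width $L$ on the other two modulations. The resulting ratio $L^{1-b_{med}-b_{max}}/N^{s_1+s_2+s_3+b_{min}}$ is optimised at an endpoint $L\in\{1,N\}$, which yields only $s_1+s_2+s_3\geqslant 1-b_1-b_2-b_3$ or $s_1+s_2+s_3\geqslant -b_{min}$. You never obtain the middle regime $s_1+s_2+s_3\geqslant\frac12-b_{med}-b_{min}$, which is the binding bound when $b_{max}>\frac12>b_{med}$. The cure is exactly what you hint at but do not implement: the two non-$j_0$ modulation widths must be chosen \emph{independently}, $L_j$ and $L_k$, so that the ratio factors as $L_j^{1/2-b_j}L_k^{1/2-b_k}/N^{s_1+s_2+s_3+b_{min}}$ and each of $L_j,L_k$ can be set to $N$ or $1$ according to the sign of $\frac12-b_j$ and $\frac12-b_k$ separately. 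With this correction the optimisation returns $\bigl(\frac12-b_{max}\bigr)_+ + \bigl(\frac12-b_{med}\bigr)_+ - b_{min}$ in one stroke, which is arguably cleaner than the paper's case split of (\ref{prop - counter examples - s cond 5}) into three sub-inequalities -- but as currently written your one-parameter family of boxes does not see all of the relevant corners of the $(L_1,L_2,L_3)$ cube.

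One more minor caveat: the claim in your (\ref{prop - counter examples - s cond 2}) sketch that ``I am free to choose which modulation to inflate'' is correct but not automatic; for each choice of $j_0$ one has to exhibit explicit translates $A,B,C$ that realize it, which is what the paper's three separate constructions do.
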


\begin{remark}
   We note that in some regions the $\pm$ structure in (\ref{thm main X^sb esimate}) is redundant and so the counter examples for the
   Wave-Sobolev spaces used in \cite{D'Ancona2010} and \cite{Selberg2008} would apply. In fact, the counterexamples in \cite{D'Ancona2010} already essentially show that we must have (\ref{prop - counter examples - b cond}), (\ref{prop - counter examples - s cond 1}), and  (\ref{prop - counter examples - s cond 5}). On the other hand, the conditions (\ref{prop - counter examples - s cond 2} - \ref{prop - counter examples - s cond 4}) reflect the $\pm$ structure and thus cannot be deduced from \cite{D'Ancona2010}.
\end{remark}

\begin{proof} It suffices to find necessary conditions for the estimate (\ref{main est fourier side}).  Moreover we may assume $\pm = +$ since the case $\pm = - $ follows by a reflection in the $\tau_j$ variables.  Let $\lambda \gg 1$ be some large parameter. The main idea is as follows. Assume we have sets $A, B, C \subset \RR^{1+1}$ with
        \begin{equation}
          \label{prop - counter examples - set cond 1}
            |A|\approx \lambda^{d_1}, \qquad |B| \approx \lambda^{d_2}, \qquad |C|\approx \lambda^{d_3}.
        \end{equation}
Moreover, suppose that if $(\tau_2, \xi_2 ) \in B$ and $ (\tau_3, \xi_3) \in C$, then
        \begin{equation}\label{prop - counter examples - set cond 2}
              - ( \tau_2 + \tau_3, \xi_2 + \xi_3) \in A
        \end{equation}
and
        \begin{equation}\label{prop - counter examples - set cond 3} \frac{ \al \xi_2 + \xi_3 \ar^{-s_1} \al \xi_2 \ar^{-s_2} \al \xi_3 \ar^{-s_3} }{ \al \tau_2 +\tau_3 +  \xi_2 + \xi_3 \ar^{b_1} \al \tau_2 + \xi_2 \ar^{b_2}\al \tau_3 - \xi_3 \ar^{b_3}} \approx \lambda^{ - \delta}.
        \end{equation}
Let $f_1 = \ind_A$, $f_2 = \ind_B$, $f_3 = \ind_C$. Then using the conditions (\ref{prop - counter examples - set cond 1} - \ref{prop - counter examples - set cond 3}) we have
        \begin{align*}
            \int_{\Gamma} \mathfrak{m}(\tau, \xi) \Pi_{j=1}^3 f_j(\tau_j, \xi_j) d\sigma(\tau, \xi) &\gtrsim \lambda^{-\delta} \int_B \int_C d\tau_3d\xi_3d\tau_2d\xi_2 \\
            &\approx \lambda^{d_2 + d_3 - \delta}.
        \end{align*}
Therefore, assuming that the inequality (\ref{main est fourier side}) holds, we must have
        $$ \lambda^{d_2 + d_3 - \delta} \lesa |A|^{\frac{1}{2}} |B|^{\frac{1}{2}} |C|^{\frac{1}{2}} \approx \lambda^{\frac{d_1 + d_2 + d_3}{2}}.$$
By choosing $\lambda$ large, we then derive the necessary condition
            \begin{equation}\label{prop - counter examples - necessary cond} \delta + \frac{ d_1 - d_2 - d_3}{2} \g 0.\end{equation}
Thus it will suffice to find sets $A$, $B$, and $C$ satisfying the conditions (\ref{prop - counter examples - set cond 1} - \ref{prop - counter examples - set cond 3}) with particular values of $\delta$, $d_1$, $d_2$, and $d_3$. \\

\textbf{$\bullet$ Necessity of (\ref{prop - counter examples - b cond}).}  
We first show that $b_j + b_k \g 0$. Since the estimate (\ref{main est fourier side}) is symmetric in $b_1$, $b_2$, it suffices to consider the pairs $(j, k) \in \{(1, 2), \, (1, 3)\}$. For the first pair, we choose
    $$ B= \{|\tau +\lambda|\les 1, \,\,\,|\xi|\les 1 \}, \qquad C = \{|\tau | \les 1, \,\,\, |\xi|\les 1 \}, \qquad A = \{|\tau-\lambda|\les 2, \,\,\, |\xi|\les 2\} .$$
Then the conditions (\ref{prop - counter examples - set cond 1} - \ref{prop - counter examples - set cond 3}) hold with $d_1 = d_2 = d_3 = 0$ and $\delta = b_1 + b_2$ and so from (\ref{prop - counter examples - necessary cond}) we obtain the necessary condition
            $b_1 + b_2 \g 0.$

On the other hand, for the pair $(1, 3)$ we choose
        $$ B= \{|\tau |\les 1,\,\,\,|\xi|\les 1\}, \qquad C = \{|\tau + \lambda| \les 1,  \,\,\,\,|\xi|\les 1  \}, \qquad A = \{|\tau-\lambda|\les 2, \,\,\, |\xi|\les 2 \} .$$
Then as in the previous case, the conditions (\ref{prop - counter examples - set cond 1} - \ref{prop - counter examples - set cond 3}) hold with $d_1 = d_2 = d_3 = 0$ and $\delta = b_1 + b_3$ and so from (\ref{prop - counter examples - necessary cond}) we obtain the necessary condition $b_1 + b_3 \g 0.$

To show the second condition in (\ref{prop - counter examples - b cond}) is also necessary, we take
    $$ B= \{ |\tau - 2\lambda|\les \lambda, \,\,\,  |\xi|\les 1\}, \qquad C = \{ |\tau - 2\lambda| \les \lambda, \,\,\,\,   |\xi|\les 1\}, \qquad A = \{ |\tau+ 4\lambda|\les 2\lambda, \,\,\,|\xi|\les 2 \} .$$
Then  (\ref{prop - counter examples - set cond 1} - \ref{prop - counter examples - set cond 3}) hold with $d_1=d_2=d_3 =1$ and $\delta = b_1 +b_2 + b_3$ which leads to the condition $ b_1 + b_2  +b_3 \g \frac{1}{2}$.\\

\textbf{$\bullet$ Necessity of (\ref{prop - counter examples - s cond 1}).} Let
    $$ B= \{ |\tau - \lambda|\les 1, \,\,\,  |\xi + \lambda|\les 1\}, \qquad C = \{ |\tau | \les 1, \,\,\,\,   |\xi|\les 1\}, \qquad A = \{ |\tau + \lambda|\les 2, \,\,\,|\xi - \lambda|\les 2 \} .$$
Then  (\ref{prop - counter examples - set cond 1} - \ref{prop - counter examples - set cond 3}) hold with $d_1=d_2=d_3 =0$ and $\delta =s_1 + s_2$ and so we must have (\ref{prop - counter examples - s cond 1}).\\

\textbf{$\bullet$ Necessity of (\ref{prop - counter examples - s cond 2}).} By symmetry we may assume $k=1$. Suppose $b_{min} = b_1$ and choose
    $$ B= \{ |\tau |\les 1, \,\,\,  |\xi|\les 1\}, \qquad C = \{ |\tau - \lambda | \les 1, \,\,\,\,   |\xi - \lambda|\les 1\}, \qquad A = \{ |\tau + \lambda|\les 2, \,\,\,|\xi+\lambda|\les 2 \} .$$
Then  (\ref{prop - counter examples - set cond 1} - \ref{prop - counter examples - set cond 3}) hold with $d_1=d_2=d_3 =0$ and $\delta =s_1 + s_3 + b_1$ and so we must have $ s_1 + s_3 + b_1 \g 0.$

On the other hand, if $b_{min} = b_2$ we let
        $$ B= \{ |\tau + 2\lambda|\les 1, \,\,\,  |\xi |\les 1\}, \qquad C = \{ |\tau -  \lambda | \les 1, \,\,\,\,   |\xi - \lambda|\les 1\}, \qquad A = \{ |\tau -\lambda |\les 2, \,\,\,|\xi + \lambda|\les 2 \} .$$
Then  (\ref{prop - counter examples - set cond 1} - \ref{prop - counter examples - set cond 3}) hold with $d_1=d_2=d_3 =0$ and $\delta =s_1 + s_3 + b_2$ and so we obtain the condition $ s_1 + s_3 + b_2 \g 0.$

The final case, $b_{min} = b_3$, follows by taking
    $$ B= \{ |\tau |\les 1, \,\,\,  |\xi |\les 1\}, \qquad C= \{ |\tau  - \lambda | \les 1, \,\,\,\,   |\xi + \lambda|\les 1\}, \qquad A = \{ |\tau + \lambda|\les 2, \,\,\,|\xi-\lambda|\les 2 \} .$$
Again the conditions (\ref{prop - counter examples - set cond 1} - \ref{prop - counter examples - set cond 3}) hold with $d_1=d_2=d_3 =0$ and $\delta =s_1 + s_3 + b_3$. Hence (\ref{prop - counter examples - s cond 2}) is necessary. \\

\textbf{$\bullet$ Necessity of (\ref{prop - counter examples - s cond 3}).}
As in the previous case, by symmetry, we may assume $k=1$. Let
     $$ B= \Big\{ |\tau - \lambda | \les \frac{\lambda}{4} , \,\,\,\,   |\xi|\les 1\Big\}, \qquad C =\Big\{ |\tau |\les \frac{\lambda}{4} , \,\,\,|\xi -\lambda|\les \frac{\lambda}{4} \Big\}, \qquad A = \Big\{ |\tau + \lambda|\les \frac{\lambda}{2}, \,\,\,  |\xi + \lambda|\les \frac{\lambda}{2} \Big\} .$$
Then  (\ref{prop - counter examples - set cond 1} - \ref{prop - counter examples - set cond 3}) hold with $d_1=d_3=2$, $d_2=1$,  and $\delta =s_1 + s_3 + b_1 + b_2 + b_3$. Thus we obtain the necessary condition (\ref{prop - counter examples - s cond 3}).\\

\textbf{$\bullet$ Necessity of (\ref{prop - counter examples - s cond 4}).}
In this case we choose
    $$ B= \Big\{ |\tau  + \xi|\les 1, \,\,\,  |\xi - \lambda|\les \frac{\lambda}{4} \Big\},  \qquad C = \Big\{ |\tau + \xi |\les 1, \,\,\,|\xi -\lambda|\les \frac{\lambda}{4} \Big\}, \qquad A=\Big\{ |\tau + \xi|\les 2, \,\,\,  |\xi + 2 \lambda|\les \frac{\lambda}{2} \Big\}.$$
Then a simple computation shows that (\ref{prop - counter examples - set cond 1} - \ref{prop - counter examples - set cond 3}) hold with $d_1=d_2=d_3=1$, and $\delta =s_1 + s_2 + s_3 + b_3$. So we see that (\ref{prop - counter examples - s cond 4}) is necessary.\\

\textbf{$\bullet$ Necessity of (\ref{prop - counter examples - s cond 5}).}
We break this into the 3 conditions
    \begin{equation}\label{prop - counter examples - split into 3 cond} s_1 + s_2 + s_3 \g 1 - b_1 - b_2 - b_3, \qquad s_1 + s_2 + s_3\g \frac{1}{2} - b_j - b_k, \qquad s_1 + s_2 + s_3 \g - b_{min}. \end{equation}
For the first inequality, we take
     $$ B= \Big\{ |\tau|\les \frac{\lambda}{4}, \,\,\,  |\xi - \lambda|\les \frac{\lambda}{4} \Big\},  \qquad C = \Big\{ |\tau  |\les \frac{\lambda}{4}, \,\,\,|\xi  - \lambda|\les \frac{\lambda}{4} \Big\}, \qquad A=\Big\{ |\tau|\les \frac{\lambda}{2}, \,\,\,  |\xi + 2\lambda|\les \frac{\lambda}{2} \Big\}.$$
Then we have (\ref{prop - counter examples - set cond 1} - \ref{prop - counter examples - set cond 3}) with $d_1=d_2=d_3=2$, and $\delta =s_1 + s_2 + s_3 +b_1 + b_2+ b_3$. Therefore we must have $ s_1+s_2+s_3  \g 1 - b_1 - b_2 - b_3.$

We now consider the second inequality in (\ref{prop - counter examples - split into 3 cond}). By symmetry, it suffices to consider $(j, k) \in \{ (1, 2), (1, 3)\}$. Let
    $$ B= \Big\{ |\tau + \xi - \lambda|\les \frac{\lambda}{4}, \,\,\,  |\xi - \lambda|\les \frac{\lambda}{4} \Big\}, \,\,\,\, C = \Big\{ |\tau - \xi  |\les 1, \,\,\,|\xi  - \lambda|\les \frac{\lambda}{4} \Big\}, \,\,\,\, A=\Big\{ |\tau + \xi + 3\lambda|\les \lambda, \,\,\,  |\xi + 2\lambda|\les \frac{\lambda}{2} \Big\}.$$
Then  (\ref{prop - counter examples - set cond 1} - \ref{prop - counter examples - set cond 3}) hold with $d_1=d_2=2$, $d_3=1$,  and $\delta =s_1 + s_2 + s_3 +b_1 + b_2$. Therefore we must have $ s_1+s_2+s_3 >\frac{1}{2} - b_1 - b_2.$ On the other hand, for the case $(j, k) = (1, 3)$, we take
    $$ B= \Big\{ |\tau + \xi |\les 1, \,\,\,  |\xi - \lambda|\les \frac{\lambda}{4} \Big\},  \qquad C = \Big\{ |\tau   |\les \frac{\lambda}{4}, \,\,\,|\xi  - \lambda|\les \frac{\lambda}{4} \Big\}, \qquad A=\Big\{ |\tau + \xi + \lambda|\les \frac{3\lambda}{4}, \,\,\,  |\xi + 2\lambda|\les \frac{\lambda}{2} \Big\}.$$
A simple computation shows that (\ref{prop - counter examples - set cond 1} - \ref{prop - counter examples - set cond 3}) are satisfied with $d_1=d_3=2$, $d_2=1$,  and $\delta =s_1 + s_2 + s_3 +b_1 + b_3$.

Finally, the third condition in (\ref{prop - counter examples - split into 3 cond}) follows from the conditions  (\ref{prop - counter examples - s cond 1}) and (\ref{prop - counter examples - s cond 2}).

\end{proof}

\providecommand{\bysame}{\leavevmode\hbox to3em{\hrulefill}\thinspace}
\providecommand{\MR}{\relax\ifhmode\unskip\space\fi MR }
\providecommand{\MRhref}[2]{%
  \href{http://www.ams.org/mathscinet-getitem?mr=#1}{#2}
}
\providecommand{\href}[2]{#2}

\end{document}